\documentclass[11pt,letterpaper]{amsart}

\makeatletter
\def\input@path{{./}{./draft/proof/}}
\makeatother

\numberwithin{equation}{section}
\numberwithin{figure}{section}

\usepackage{amssymb}
\usepackage{mathtools}
\usepackage[headings]{fullpage}
\usepackage{amsmath}
\usepackage{thmtools}
\usepackage{thm-restate}
\usepackage{graphicx}
\usepackage{xparse}
\usepackage[dvipsnames]{xcolor}
\usepackage[normalem]{ulem}
\usepackage[colorlinks=true,linkcolor=blue, citecolor=blue, urlcolor=blue,
pagebackref=true]{hyperref}
\usepackage{enumitem}
\usepackage{tikz}
\usepackage{tikz-cd}
\usepackage{caption}

\newcount\tableauRow
\newcount\tableauCol

\usetikzlibrary {positioning}
\definecolor {processblue}{cmyk}{0.96,0,0,0}

\makeatletter
\def\theenumi{\@alph\c@enumi}
\makeatother

\usepackage{units}
\usepackage{ytableau}

\theoremstyle{plain}
\newtheorem{theorem}[equation]{Theorem}

\newtheorem{lemma}[equation]{Lemma}
\newtheorem{corollary}[equation]{Corollary}
\newtheorem{proposition}[equation]{Proposition}
\theoremstyle{definition}

\newtheorem{conjecture}[equation]{Conjecture}
\newtheorem{remark}[equation]{Remark}

\newtheorem{example}[equation]{Example}

\newtheorem{definition}[equation]{Definition}

\newtheorem{notation}[equation]{Notation}

\newtheorem{discussion}[equation]{Discussion}

\newtheorem{observation}[equation]{Observation}

\newtheorem{construction}[equation]{Construction}

\newcommand{\Ext}{\operatorname{Extn}}
\newcommand{\del}{\operatorname{del}}

\DeclareMathOperator{\Cat}{Cat\textbf{}}
\newcommand{\Rec}{\mathrm{Rec}}
\newcommand{\std}{\mathrm{std}}

\newcommand{\Av}{\operatorname{Av}}
\newcommand{\CoGr}{\operatorname{CoGr}}
\newcommand{\Vex}{\operatorname{Vex}}
\newcommand{\CoVex}{\operatorname{CoVex}}
\newcommand{\Sm}{\operatorname{Sm}}
\newcommand{\Lev}{\operatorname{Lev}}

\newcommand{\Grass}{\mathcal{G}}

\newcommand{\GL}{\mathrm{GL}}

\newcommand{\iidsim}{\stackrel{\mathrm{iid}}{\sim}}

\DeclareMathOperator{\FS}{\mathrm{FS}}
\DeclarePairedDelimiterXPP\Prob[1]{\mathbb{P}}(){}{#1}
\DeclarePairedDelimiterXPP\E[1]{\mathbb{E}}{[}{]}{}{#1}

\DeclarePairedDelimiterXPP\Var[1]{\mathbb{V}\mathrm{ar}}(){}{#1}
\DeclarePairedDelimiterXPP\Cov[1]{\mathbb{C}\mathrm{ov}}(){}{#1}
\DeclarePairedDelimiterXPP\Corr[1]{\mathbb{C}\mathrm{orr}}(){}{#1}
\newcommand{\Unif}[1]{\mathrm{Unif}(#1)}

\usepackage{verbatim}

\DeclareMathOperator{\rec}{\mathcal{R}}
\DeclareMathOperator{\BS}{BS}

\title{The record statistic and forward stability of Schubert products}
\author{Andrew Hardt}
\author{Reuven Hodges}
\author{Hanzhang Yin}

\begin{document}

\begin{abstract}
We initiate a probabilistic study of forward stability for products of Schubert polynomials through the record statistic (left-to-right maxima) of permutations. Building on the explicit record formula for forward stability obtained by Hardt and Wallach, we study random pairs of permutations drawn from three natural families: uniform permutations, Grassmannian permutations, and Boolean permutations. For each family, we determine record probabilities and use them to analyze the asymptotic behavior of forward stability. For uniform and Grassmannian permutations, we obtain asymptotics for the mean together with limiting distribution results. For Boolean permutations, we prove linear-order growth of the mean, and our analysis also produces an explicit time-inhomogeneous Markov chain that yields an exact linear-time uniform sampler. Beyond these cases, we prove that the record-set statistic is equidistributed on the avoidance classes of $132$ and $231$, and consequently the corresponding forward stability distributions coincide. We conclude with conjectures for numerous further permutation classes and a conjectural recursive criterion for when two avoidance classes have the same record-set distribution.
\end{abstract}
\maketitle

\section{Introduction}\label{sec:introduction}

Records (left-to-right maxima in the one-line notation of a permutation) are among the earliest and most classical permutation statistics. For a uniform random permutation
$w\in S_n$, R\'enyi proved that the record events at positions $1,2,\dots,n$ are mutually independent and satisfy
$\Prob{\text{record at position }j}=1/j$ \cite{Renyi1962}.  Beyond the uniform model, record behavior becomes
combinatorially rich; when one restricts to structured families (e.g.\ pattern avoidance classes, Grassmannian, Boolean), record events are no
longer independent and their correlations reflect fine structure of the class.  A main point of this paper is that this classical
statistic is not merely a probabilistic curiosity—it is the correct statistic for a geometric stabilization problem in Schubert
calculus.  In work of the first author and Wallach \cite{HardtWallach2024Schubert}, the \emph{forward stability} $\FS(u,v)$ of the product of
Schubert polynomials $\mathfrak{S}_u\mathfrak{S}_v$--the smallest $N$ for which the product is fully realized in the cohomology ring of the flag variety $\GL_N/B$--admits an explicit max formula that can be expressed in terms of the record events of $u$ and $v$.  This makes forward stability amenable to probabilistic analysis via records, and one can study $\FS(u,v)$ through the extremes of an explicit record-driven process.

We exploit this bridge between records and Schubert products to quantify $\FS(u,v)$ for random
independent pairs $(u,v)$ drawn from three fundamental families: uniform permutations, Grassmannian permutations (those with at most one
descent), and Boolean permutations (those whose reduced words contain no repeated simple reflections).  We focus on these three classes
because they exhibit markedly different record behavior—and consequently markedly different stabilization behavior—while also arising
naturally in contexts where Schubert polynomials and Schubert varieties are especially well studied.  We first determine record
probabilities in each family and then derive asymptotics for the expected value $\E{\FS(u,v)}$ and distributional limits,
illustrating how changes in record density and correlation structure produce distinct stabilization regimes.

\subsection{The Record Statistic}\label{subsec:record-statistic}

For $n\ge 1$ let $S_n$ denote the symmetric group on $\{1,\dots,n\}$, written in one-line notation
$w=w(1)\,w(2)\cdots w(n)$.  For $w\in S_n$ and $j\in[n]$, define the left-inversion count
\[
d_j(w)\;:=\;\left|\{\,k<j:\ w(k)>w(j)\,\}\right|.
\]
We say that $j$ is a \emph{record position} of $w$ if $d_j(w)=0$.  Define the record indicator and its complement by
\[
\rec_j(w)\;:=\;\mathbf{1}\{d_j(w)=0\},
\qquad
\chi_j(w)\;:=\;1-\rec_j(w).
\]

For a finite set $\mathcal W$ we write $\Unif{\mathcal W}$ for the uniform distribution on $\mathcal W$, and write $w\sim \Unif{\mathcal W}$ when $w$ is drawn from this distribution. We also write $\chi_j^{\mathcal W}$ for the random variable $\chi_j(w)$ under $w\sim\Unif{\mathcal W}$. Besides $S_n$ itself, the two main families we consider are the following.  A permutation $w\in S_n$ is \emph{Grassmannian} if there
is at most one index $m\in\{1,\dots,n-1\}$ with $w(m)>w(m+1)$; let $\Grass_n$ denote the set of Grassmannian permutations in $S_n$.
A permutation $w\in S_n$ is \emph{Boolean} if it can be written as a product of simple transpositions $s_i=(i\ \ i{+}1)$ in which no
generator $s_i$ appears more than once; let $\mathcal B_n$ denote the set of Boolean permutations in $S_n$. Our first result records the probability that a random permutation from each of these families has a record at a given position.

\begin{theorem}[Record probabilities across permutation families]\label{thm:record-probabilities}
Fix $n\ge 1$ and $j\in[n]$.
\begin{enumerate}[label=(\arabic*)]
\item \textbf{Uniform (R\'enyi \cite{Renyi1962}).} If $w\sim \Unif{S_n}$, then
\[
\Prob{\rec_j(w)=1}=\frac{1}{j}.
\]

\item \textbf{Grassmannian.} If $w\sim \Unif{\Grass_n}$, then
\[
\Prob{\rec_j(w)=1}=\frac{\displaystyle \sum_{k=j}^{n}\binom{n}{k}+2^{j-1}-n}{2^{n}-n}
\]

\item \textbf{Boolean.} If $w\sim \Unif{\mathcal B_n}$, then
\[
\Prob{\rec_j(w)=1}
=
\frac{F_{2n-2} - F_{2j-4}F_{2(n-j)-1}}{F_{2n-1}}
=
\frac{2\phi^{2n-2} + 2\phi^{2-2n} + \phi^{2n-4j+3} - \phi^{4j-2n-3}}{\sqrt{5}\,(\phi^{2n-1} + \phi^{1-2n})},
\]
where $F_m$ denotes the Fibonacci sequence (extended to $m\in\mathbb Z$ by $F_{-m}=(-1)^{m+1}F_m$) and
$\phi=\frac{1+\sqrt5}{2}$.
\end{enumerate}
\end{theorem}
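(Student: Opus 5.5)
For part (1), the plan is a direct symmetry argument. Under $w\sim\Unif{S_n}$, the relative order of $w(1),\dots,w(j)$ is a uniform permutation of $j$ letters. Position $j$ is a record exactly when $w(j)$ is the largest of these $j$ values, which happens with probability $1/j$. Nothing more is needed here, since this is R\'enyi's result \cite{Renyi1962}.

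For part (2), I count directly. First, $|\Grass_n|=2^n-n$: a non-identity Grassmannian permutation with its descent at $m$ is determined by the set $S=\{w(1),\dots,w(m)\}$, which can be any $m$-subset of $[n]$ other than $[m]$, and summing over $m$ gives $\sum_m\bigl(\binom nm-1\bigr)+1=2^n-n$. Next I split the record count at $j$ by the descent position $m$.
\begin{itemize}
\item If $j\le m$, then $j$ lies in the increasing initial run, so it is always a record. This contributes $\sum_{k\ge j}\bigl(\binom nk-1\bigr)$.
\item If $j>m$, then, since both runs are increasing, $j$ is a record iff $w(j)>\max S$. All values below $w(j)$ must then already appear among positions $1,\dots,j$, so this is equivalent to $\{w(1),\dots,w(j)\}=[j]$ with $w(j)=j$, i.e.\ $S\subseteq[j-1]$ with $S$ not an initial segment. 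This gives $2^{j-1}-j$ choices.
\end{itemize}
Adding these two contributions and the identity, the total is $\sum_{k\ge j}\binom nk-(n-j+1)+2^{j-1}-j+1=\sum_{k=j}^n\binom nk+2^{j-1}-n$. Dividing by $2^n-n$ gives the formula.

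For part (3), I would use the standard encoding of Boolean permutations. Such a $w$ is determined by its support $D\subseteq[n-1]$ together with, for each pair $i,i+1\in D$, whether $s_i$ precedes or follows $s_{i+1}$ in a reduced word. This encoding gives $|\mathcal B_n|=F_{2n-1}$ via the transfer-matrix recursion $b_n=3b_{n-1}-b_{n-2}$. The key combinatorial step is to characterize non-records at $j$ in these terms. I expect $j$ to be a non-record iff $s_{j-1}\in D$ and the maximal run of consecutive generators around $s_{j-1}$ has a specific left/right orientation pattern at $s_{j-1}$ (roughly, $s_{j-1}$ acts after its left neighbour so that a larger value is carried to the left of position $j$). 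The intended consequence is that, once the local data at $s_{j-1}$ is fixed, the number of non-records at $j$ factors as (a count of choices on $s_1,\dots,s_{j-2}$) times (a count on $s_j,\dots,s_{n-1}$). The two factors should be Fibonacci numbers of the form $F_{2j-4}$ and $F_{2(n-j)-1}$, corrected by a term independent of $j$. The target identity is
\[
\#\{w\in\mathcal B_n:\rec_j(w)=1\}=F_{2n-2}-F_{2j-4}F_{2(n-j)-1}.
\]
I would verify this against small cases $n\le 4$ by hand, and prove the factors by the same $3b_{n-1}-b_{n-2}$ recursions applied to boundary-constrained Boolean words, using the extension $F_{-m}=(-1)^{m+1}F_m$ to handle the edge cases $j=1,2$. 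The closed form in $\phi$ then follows routinely from Binet's formula and the product identity $F_aF_b=\tfrac15(L_{a+b}-(-1)^bL_{a-b})$.

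The main obstacle is the record characterization in part (3): translating ``some $k<j$ has $w(k)>w(j)$'' into local orientation data of the reduced word. If the local description is not clean, my fallback is a transfer-matrix computation that scans generators $s_1,\dots,s_{n-1}$ while tracking a small state recording whether position $j$ has been ``overtaken.'' Its entries would again be Fibonacci, giving the same formula.
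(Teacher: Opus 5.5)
Parts (1) and (2) are correct and match the paper. Your Grassmannian count uses the same case split on the descent position, and the same reduction of a record at $j>m$ to $S\subseteq[j-1]$.

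\textbf{The gap is in part (3).} It sits exactly at the step you flag: the record criterion is never established, and the guess you give for it is wrong. The orientation of $s_{j-1}$ relative to its left neighbour $s_{j-2}$ plays no role; what matters is its relation to $s_j$.

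Read the reduced word left to right as adjacent swaps of positions in $12\cdots n$. Position $j$ is touched only by $s_{j-1}$ and $s_j$, each occurring at most once. Until the first of these acts, positions $1,\dots,j-1$ hold $\{1,\dots,j-1\}$, position $j$ holds $j$, and positions $j+1,\dots,n$ hold $\{j+1,\dots,n\}$. There are three cases.
\begin{enumerate}
\item If $s_{j-1}$ is absent, $j$ is a record.
\item If $s_j$ acts before $s_{j-1}$, a value larger than $j$ enters position $j$ and is then carried left by $s_{j-1}$, so $j$ is a non-record.
\item If $s_{j-1}$ acts first, it carries the value $j$ left, and $j$ is a record exactly when $s_j$ occurs afterwards.
\end{enumerate}
Hence $\rec_j(w)=0$ if and only if $s_{j-1}$ occurs and either $s_j$ is absent or $s_j$ precedes $s_{j-1}$. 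This is the paper's Lemma~\ref{lem:chi-as-local}: $\mathsf{Tag}_{j-1}(w)\in\{S,C\}$ and $\mathsf{Tag}_j(w)\neq S$.

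Your version fails in small cases. In $S_4$ with $j=3$, $s_2s_1=3124$ has a non-record at $3$ although $s_2$ acts before $s_1$. Conversely, $s_1s_2s_3=2341$ has a record at $3$ although $s_2$ acts after $s_1$. For $j=2$ there is no left neighbour at all, yet $s_1s_2=231$ and $s_2s_1=312$ differ.

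\textbf{The counting step has a related problem.} You read the factors $F_{2j-4}$ and $F_{2(n-j)-1}$ off the target rather than deriving them, and they are not what the encoding produces. With the correct criterion, the non-record count is an exact product with no correction term:
\begin{enumerate}
\item configurations on $s_1,\dots,s_{j-1}$ that use $s_{j-1}$ number $|\mathcal B_{[j]}|-|\mathcal B_{[j-1]}|=F_{2j-2}$;
\item configurations on $s_j,\dots,s_{n-1}$ with $s_j$ absent or placed before $s_{j-1}$ number $F_{2(n-j)-1}+F_{2(n-j)}=F_{2(n-j)+1}$.
\end{enumerate}
So $F_{2n-1}-N(n,j)=F_{2j-2}F_{2(n-j)+1}$.

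To reach the stated form you then need $F_{2j-2}F_{2n-2j+1}-F_{2j-4}F_{2n-2j-1}=F_{2n-3}$. This follows by applying Lemma~\ref{lem:fib-identity} twice and subtracting, or from your product-to-sum formula.

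The paper instead counts records directly (Lemma~\ref{lem:bool-rec-recurrence}). It splits $w=w_1w_2$ with $w_1\in\mathcal B_{[j]}$ and $w_2\in\mathcal B_{[j,n]}$, according to whether $s_{j-1}$ is absent or both $s_{j-1},s_j$ occur with $s_{j-1}$ first. This gives $F_{2j-3}F_{2n-2j+1}+F_{2j-2}F_{2n-2j}$, which is then converted via Tagiuri--Vajda.

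Either route works once the local criterion is in place. Hand checks for $n\le 4$ and an unspecified fallback transfer matrix do not substitute for it.
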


The Grassmannian case is combinatorially straightforward once one parametrizes a permutation by its descent position $k$ and the $k$-subset $V$ of values in the prefix: the record event $\rec_j(w)=1$ translates into a simple explicit constraint on $(k,V)$. By contrast, the Boolean case requires a more involved structural argument. Imposing a record at $j$ forces a natural bijective splitting of a Boolean permutation into a left factor with simple reflections supported on $\{1,\dots,j-1\}$ and a right factor supported on $\{j,\dots,n\}$, leading to a recursion for the resulting enumeration. Solving this recursion yields a Fibonacci closed form.

\subsection{Backward and Forward Stability}

Let $S_n$ denote the symmetric group on $\{1, \dots, n\}$, and let $S_{\mathbb{Z}_+}$ denote the infinite symmetric group on the positive integers. For $w \in S_n$, the length $\ell(w)$ is the number of inversions of $w$. Let $w_0\in S_n$ be the longest element, given by $w_0(i)=n+1-i$.

A \emph{Schubert polynomial} $\mathfrak{S}_w$ is a polynomial representative for the Schubert class corresponding to $w \in S_n$ in the cohomology of the flag variety. These polynomials form a basis for the polynomial ring and their structure constants encode intersection numbers in algebraic geometry. For our purposes, the key property is that products of Schubert polynomials exhibit \emph{stability}.

If $w\in S_n$, let $w\times 1$ denote the permutation in $S_{n+1}$ fixing $n+1$ and acting as $w$ on $1,\ldots,n$. We can thus view $w$ as living in all $S_m$ for $m\ge n$.
The \emph{Schubert structure constants} for $u,v,w\in S_n$ are given by
\[
[X_u][X_v] = \sum_{w\in S_n} c_{u,v}^w[X_w],
\]
where $[X_u], [X_v], [X_w]$ are the classes of the corresponding Schubert varieties in the cohomology ring of the flag variety $\GL_n/B$. Lascoux and Sch\"utzenberger \cite{LascouxSchutzenberger} introduced polynomial representatives for these classes. Since these representatives are independent of $n$, one has $c_{u\times 1, v\times 1}^{w\times 1}\!=\!c_{u,v}^w$, and moreover for all sufficiently large $m$, the number of permutations $w$ appearing in the Schubert product $[X_u][X_v]$ taken over $S_m$ is independent of $m$. Let $\FS(u,v)$ be the smallest nonnegative integer $m$ such that $w\in S_m$ for all permutations $w$ with $c_{u,v}^w\!\ne\!0$; it represents the smallest integer $m$ such that the Schubert product $\mathfrak{S}_u\mathfrak{S}_v$ can be fully realized in the flag variety $\GL_m/B$.

Similarly, if $w\in S_n$, let $1\times w$ denote the permutation in $S_{n+1}$ fixing $1$ and acting on $2,\ldots,n+1$ as $(1\times w)(i) := w(i-1)+1$. Li \cite{Li-back-stable-conjecture} proved that these coefficients have a similar stability, $c_{1\times u, 1\times v}^{1\times w} = c_{u,v}^w$, and that there is a largest integer $k$ such that $c_{1^k\times u, 1^k\times v}^w$ for some $w$ with $w(1)\ne 1$. Let $\BS(u,v)$ be this value $k$; this represents the distance the Schubert product must be ``back-stabilized'' so that it equals the back-stable Schubert product $\mathfrak{S}_u\mathfrak{S}_v$.

The first author and Wallach~\cite{HardtWallach2024Schubert} provided explicit formulas for these numbers, and also showed that the same formulas give the analogous stability numbers in equivariant cohomology and K-theory~\cite{HardtWallach-Grothendieck}.

\subsection{Expectation and distribution of forward stability}\label{subsec:main-results}

For a permutation $w\in S_{\mathbb{Z}_+}$, let $\FS(w) = \min\{n\ge 1 \mid w\in S_n\}$, and for $i\ge 1$, define
\[
\Lambda_i(w):=\left|\{\,j\ge i:\exists\, j'<j \text{ with } w(j')>w(j)\,\}\right|
=
\sum_{j\ge i}\chi_j(w).
\]
Since $w$ fixes all sufficiently large indices, we have $\chi_j(w)=0$ for all large $j$, so $\Lambda_i(w)$ is a finite sum.
The first author and Wallach \cite[Theorem~1.6]{HardtWallach2024Schubert} show that for all $u, v \in S_{\mathbb{Z}_{+}}$,

\begin{equation}\label{eq:mainFSEq}
\FS(u, v)=\max_{1\le i \le 1+\max(\FS(u), \FS(v))}\left(\Lambda_i(u)+\Lambda_i(v)+i-1\right).
\end{equation}

Thus $\FS(u,v)$ is the maximum of a record-driven random--walk--type process $\{Y_i(u,v)\}$ defined by $Y_i(u,v):=\Lambda_i(u)+\Lambda_i(v)+i-1$ for $1\le i\le 1+\max(\FS(u),\FS(v))$. This characterization reduces questions about stabilization of Schubert products to extreme-value analysis for
statistics built from record events. We write $X_n\xrightarrow{d}X$ for convergence in distribution, and
$\mathcal{N}(0,1)$ for the standard normal distribution.

\begin{restatable}{theorem}{theoremUniformMain}
Let $u,v\iidsim\Unif{S_n}$. As $n\to\infty$,
\[
\E{\FS(u,v)} = 2n-2\ln n-2\gamma + c + o(1),
\]
where $c\in[1,\pi^2/6]$ and $\gamma$ is the Euler--Mascheroni constant, and
\[
\dfrac{\FS(u, v) - (2n - 2\ln n)}{ \sqrt{2 \ln n}} \xrightarrow{d} \mathcal{N}(0, 1).
\]
\end{restatable}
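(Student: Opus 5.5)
The plan is to rewrite the maximum in \eqref{eq:mainFSEq} as $2n$ minus a minimum of a process built from the record indicators, which by R\'enyi's theorem (Theorem~\ref{thm:record-probabilities}(1)) are independent Bernoulli variables. For $u,v\in S_n$ we have $\chi_j=0$ for $j>n$. Set $R_i(w):=\sum_{j\ge i}\rec_j(w)$, the number of records of $w$ at positions $i,\dots,n$. Then $\Lambda_i(w)=(n-i+1)-R_i(w)$ for $i\le n+1$, and hence
\[
Y_i(u,v)=2n-i+1-R_i(u)-R_i(v),\qquad \FS(u,v)=2n-\min_{1\le i\le n+1}\bigl(R_i(u)+R_i(v)+i-1\bigr).
\]
Write $M_n:=\min_i Z_i$ with $Z_i:=R_i(u)+R_i(v)+i-1$. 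The variables $\rec_j(u),\rec_j(v)$ are all independent, with $\rec_j\sim\mathrm{Bernoulli}(1/j)$. At $i=1$ we get $Z_1=R_1(u)+R_1(v)$, the sum of the record counts, which has mean $2H_n=2\ln n+2\gamma+o(1)$ and variance $2\sum_j(1/j-1/j^2)\sim 2\ln n$. By Lindeberg's CLT for independent bounded summands, $(Z_1-2\ln n)/\sqrt{2\ln n}\to\mathcal N(0,1)$.

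The first key step is to show that $Z_1-M_n$ is tight and has a limiting mean. Let $D_i:=Z_1-Z_i$. Then
\[
D_i=\sum_{j<i}\bigl(\rec_j(u)+\rec_j(v)\bigr)-(i-1)=-\sum_{j<i}\bigl(1-\rec_j(u)-\rec_j(v)\bigr).
\]
Because $1-\rec_1(u)-\rec_1(v)=-1$, $D_2=1$. For $j\ge2$ each increment $\rec_j(u)+\rec_j(v)-1$ has mean $2/j-1<0$ once $j\ge3$, so $D_i$ is a walk with strongly negative drift. Therefore $\max_i D_i$ converges almost surely, as $n\to\infty$, to $D^*:=\sup_{i\ge1}D_i$ computed on the infinite independent sequence. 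Domination by a geometric-type tail (a Chernoff bound on the partial sums) gives convergence of the means. Then
\[
\E{M_n}=2H_n-\E{\max_i D_i}\to \text{``}2\ln n+2\gamma-c\text{''}.
\]
This yields $\E{\FS}=2n-2\ln n-2\gamma+c+o(1)$ with $c:=\E{D^*}$. Since $D^*=O_p(1)$, Slutsky's lemma transfers the CLT from $Z_1$ to $M_n$, giving the stated normal limit.

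The remaining step, which I expect to be the main obstacle, is identifying $c\in[1,\pi^2/6]$ without an explicit formula. For the lower bound, $D^*\ge D_2=1$ deterministically. For the upper bound, the walk after step 2 can only climb when $\rec_j(u)=\rec_j(v)=1$, which gives a $+1$ increment. I would bound $D^*\le 1+\sum_{j\ge2}\mathbf 1\{\rec_j(u)=\rec_j(v)=1\}$, since every other increment is $\le0$. Taking expectations gives
\[
\E{D^*}\le 1+\sum_{j\ge2}\frac{1}{j^2}=\frac{\pi^2}{6}.
\]
This crude count of upward steps gives precisely the claimed bound, so I expect the upper bound to work. The technical care is mainly in justifying the interchange of limit and expectation: uniform integrability follows from the same domination $\max_i D_i\le 1+\sum_{j\ge 2}\mathbf 1\{\rec_j(u)=\rec_j(v)=1\}$, whose right-hand side is integrable uniformly in $n$. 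Dominated convergence then finishes.
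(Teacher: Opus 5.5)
Your route is essentially the paper's. Anchoring at $i=1$ with $D_2=1$ deterministic is the paper's decomposition $\widetilde{\FS}_n=Y_2+Z_n$, where $Z_n(u,v)=\max_{2\le k\le n+1}(Y_k-Y_2)$; indeed your $\max_{i\le n+1}D_i$ equals $1+Z_n(u,v)$. The remaining pieces match as well:
\begin{itemize}
\item Your up-step count is Lemma~\ref{lem:max_bound}.
\item Your coupling of all $n$ to one infinite Bernoulli$(1/j)$ sequence is the R\'enyi-independence argument the paper uses to show $a_n=\E{Z_n}$ is nondecreasing. You use monotone convergence where the paper says ``nondecreasing and bounded''.
\item The CLT step differs only in applying Lindeberg to the pooled $2n$ summands rather than combining two separate record CLTs.
\end{itemize}

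One step is false as written. \eqref{eq:mainFSEq} takes the maximum only over $i\le 1+\max(\FS(u),\FS(v))$. Extending the range to $i\le n+1$ adds the terms $Y_i=i-1$ for $i$ beyond that cutoff. So your displayed right-hand side is really $\widetilde{\FS}_n(u,v)$. This equals $\FS(u,v)$ unless $u(n)=v(n)=n$, in which case it equals $\max\{\FS(u,v),n\}$. For example, $u=v=\mathrm{id}$ gives $\FS(u,v)=1$, while your formula gives $n$.

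You need the paper's correction: $0\le \widetilde{\FS}_n(u,v)-\FS(u,v)\le n\,\mathbf 1\{u(n)=v(n)=n\}$. That event has probability $1/n^2$, so the discrepancy is at most $1/n$ in expectation and is zero with probability tending to $1$. With this repair, both the mean asymptotics (with $c=\E{\sup_i D_i}\in[1,\pi^2/6]$) and the normal limit go through exactly as you describe.
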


\begin{restatable}{theorem}{theoremGrassmannianMain} \label{thm:GrassmannianMain}
Let $u,v\iidsim\Unif{\Grass_n}$. As $n\to\infty$,
\[
\E{\FS(u, v)} = \frac{3}{2}n-\frac{1}{2\sqrt{\pi}}\sqrt{n}-2+o(1),
\]
and, with $Z_1,Z_2\sim\mathcal{N}(0,1)$ independent,
\[
\frac{\FS(u, v) - \frac{3}{2}n}{\sqrt{n}} \xrightarrow{d} -\frac{1}{2} \max(Z_1, Z_2).
\]
\end{restatable}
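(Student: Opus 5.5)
The plan is to reduce $\FS(u,v)$ for Grassmannian $u,v$ to an explicit function of two parameters per permutation, and then do the asymptotics on those parameters directly. Parametrize a non-identity $w\in\Grass_n$, as in the proof of Theorem~\ref{thm:record-probabilities}(2), by its descent position $k$ and the set $V$ of prefix values. Here $V$ is a $k$-subset different from $\{1,\dots,k\}$, and $w$ is increasing on $[1,k]$ and on $[k+1,n]$. Set $M:=\max V=w(k)$.

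\textbf{Step 1: the non-record set of $w$.} Positions $1,\dots,k$ are records. A position $j>k$ is a record iff $w(j)>M$. The complement of $V$ is written in increasing order, so the non-records are exactly the first $M-k$ suffix positions, namely $k<j\le M$. Therefore
\[
\Lambda_i(w)=\bigl|(k,M]\cap[i,\infty)\bigr|.
\]

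\textbf{Step 2: evaluate the maximum in \eqref{eq:mainFSEq}.} Plug Step 1 into \eqref{eq:mainFSEq} and follow $Y_i$ as a function of $i$.
\begin{itemize}
\item For $i\le \min(k_u,k_v)+1$, $Y_i$ increases with slope $1$.
\item Between $\min(k_u,k_v)+1$ and $\max(k_u,k_v)+1$, $Y_i$ has slope $0$: one $\Lambda$ loses $1$ per step while $i-1$ gains $1$.
\item After that, $Y_i$ has slope $-1$ until both intervals $(k,M]$ are exhausted.
\end{itemize}
This should give the exact identity
\[
\FS(u,v)=M_u+M_v-\max(k_u,k_v).
\]
The identity permutation and the cutoff $i\le 1+\max(\FS(u),\FS(v))$ need a separate check. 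The identity occurs with probability $O(n2^{-n})$, so it is negligible for both the mean and the limit law.

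\textbf{Step 3: joint law of $(k,M)$ under $\Unif{\Grass_n}$.} This law is the uniform law on pairs $(k,V)$ with $V\ne[k]$, and there are $2^n-n$ such pairs. Up to an exponentially small total-variation error, $V$ is therefore a uniform random subset of $[n]$ and $k=|V|$.

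Write $G:=n-M$ for the number of top values missing from $V$. Then $\Prob{G\ge g}\approx 2^{-g}$, so $\E{G}\to 1$. Moreover $G$ is asymptotically independent of $(k-n/2)/\sqrt n$. One way to see this is to condition on the top $O(\log n)$ elements: they change $k$ by only $O(\log n)$, which is $o(\sqrt n)$. By the CLT,
\[
\frac{k-n/2}{\sqrt n}\xrightarrow{d}\tfrac12 Z .
\]
Combining this with Step 2 gives
\[
\frac{\FS-\tfrac32 n}{\sqrt n}=-\frac{\max(k_u,k_v)-n/2}{\sqrt n}+O_P\!\left(\tfrac{1}{\sqrt n}\right)\xrightarrow{d}-\tfrac12\max(Z_1,Z_2).
\]

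\textbf{Step 4: the expectation.} We have
\[
\E{\FS}=2n-\E{G_u}-\E{G_v}-\E{\max(k_u,k_v)} .
\]
For the last term, write $\max(k_u,k_v)=\tfrac12(k_u+k_v)+\tfrac12|k_u-k_v|$. The first half contributes $n/2$. For the second half, $k_u-k_v$ has the law of $\mathrm{Bin}(2n,\tfrac12)-n$ up to the exponentially small correction. The classical identity
\[
\E{\bigl|\mathrm{Bin}(2n,\tfrac12)-n\bigr|}=n\binom{2n}{n}4^{-n}=\sqrt{n/\pi}\,\bigl(1+O(1/n)\bigr)
\]
then gives
\[
\E{\max(k_u,k_v)}=\frac n2+\frac{\sqrt n}{2\sqrt\pi}+o(1).
\]
Together with $\E{G}\to1$, this yields $\tfrac32 n-\tfrac{1}{2\sqrt\pi}\sqrt n-2+o(1)$.

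\textbf{Main obstacle.} The step needing the most care is making the $o(1)$ error in the mean rigorous. The terms $\E{G}$ and $\E{\max(k_u,k_v)}$ are both computed exactly above, so no independence between $G$ and $k$ is required for the mean; linearity of expectation suffices. What remains is to control three things to $o(1)$: the exponentially small correction from excluding $V=[k]$; the exact finite-$n$ value of $\E{G}$, which is $1-O(n2^{-n})$; and the boundary cases in Step 2, for example $M=k$ or an interval $(k,M]$ that is empty.
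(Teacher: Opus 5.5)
Your route is essentially the paper's: parametrize by $(k,V)$, replace $\Unif{\Grass_n}$ by a uniform random subset $V$ at exponentially small total-variation cost, then use $\E{n-M}=1-2^{-n}$, De Moivre's identity for $\E{|k_u-k_v|}$, and the CLT for $k$. However, Step 2 contains a false claim.

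\textbf{The Step 2 identity is not exact.} The formula $\FS(u,v)=M_u+M_v-\max(k_u,k_v)$ can fail for non-identity $u,v$. Your slope description silently assumes that the non-record interval $(k,M]$ of the permutation with the smaller descent position is still active up to $\max(k_u,k_v)$, i.e.\ that $\max(k_u,k_v)\le\min(M_u,M_v)$. Suppose instead $k_u\le k_v$ and $M_u<k_v$. Then the two intervals are disjoint, the walk climbs again with slope $+1$ on $(M_u,k_v]$, and one gets $\FS(u,v)=M_v>M_u+M_v-k_v$.

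For a concrete example, take $u=s_1=2\,1\,3\cdots$ and $v=s_4=1\,2\,3\,5\,4\cdots$, so that $(k_u,M_u)=(1,2)$ and $(k_v,M_v)=(4,5)$. The walk is $Y_1,\dots,Y_6=2,3,3,4,5,5$, so $\FS(u,v)=5$. Your formula gives $3$, which even violates $\FS(u,v)\ge\max(\FS(u),\FS(v))$. In general, for $u,v\neq\mathrm{id}$,
\[
\FS(u,v)=\max\{M_u+M_v-\max(k_u,k_v),\,M_u,\,M_v\}.
\]
This failure is not among the boundary cases you list; $M=k$ occurs only for the identity. So your assertion that only the identity event needs probabilistic control is wrong.

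\textbf{The repair.} This is exactly what the paper does. The identity holds on the event
\[
\mathcal{E}=\{\max(k_u,k_v)<\min(M_u,M_v)\}\cap\{u\neq\mathrm{id},\,v\neq\mathrm{id}\}.
\]
The complement $\mathcal{E}^c$ forces an identity permutation or one of $k_u\ge 3n/4$, $k_v\ge 3n/4$, $M_u\le 3n/4$, $M_v\le 3n/4$. Each of these is exponentially unlikely: use Chernoff for $\mathrm{Bin}(n,1/2)$, and $\Prob{M\le 3n/4}\le 2^{-n/4}$. Hence $\Prob{\mathcal{E}^c}=O(e^{-cn})$.

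Both $\FS(u,v)$ and $M_u+M_v-\max(k_u,k_v)$ lie in $[0,2n]$. So the discrepancy contributes $O(ne^{-cn})$ to the mean, and it is $o_P(1)$ after dividing by $\sqrt n$. With this added, your Steps 3--4 go through as written.

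Two smaller remarks. The asymptotic independence of $G$ and $k$ is unnecessary, since $G=O_P(1)$ already suffices for the limit law. Also, the non-identity pairs $(k,V)$ number $2^n-n-1$; the identity brings the total to $2^n-n$.
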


\begin{restatable}{theorem}{theoremBooleanExpectation}
\label{thm:BooleanExpectation}
Let $u,v\iidsim\Unif{\mathcal{B}_n}$. Then
\[
\E{\FS(u,v)} = n+O(1).
\]
\end{restatable}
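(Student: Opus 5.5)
We prove matching bounds $n - O(1) \le \E{\FS(u,v)} \le n + O(1)$ from formula \eqref{eq:mainFSEq}, with $Y_i = \Lambda_i(u)+\Lambda_i(v)+i-1$.

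\emph{Lower bound.} Take $i=n$. Then $Y_n = \chi_n(u)+\chi_n(v)+n-1 \ge n-1$, so $\FS(u,v)\ge n-1$ deterministically.

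\emph{Upper bound, reduction.} Boolean permutations in $S_n$ satisfy $\FS(u)\le n$, so $\chi_j(u)=0$ for $j>n$ and $Y_{n+1}=n$. It therefore suffices to show
\[
\E{\max_{1\le i\le n}\bigl(\Lambda_i(u)+\Lambda_i(v)-(n-i+1)\bigr)} = O(1).
\]
Setting $m=n-i+1$, the quantity inside is
\[
\sum_{j\ge i}\chi_j(u) + \sum_{j\ge i}\chi_j(v) - m .
\]
Each sum counts non-records among the last $m$ positions. By Theorem~\ref{thm:record-probabilities}(3), $\Prob{\rec_j(w)=1}\to 2/\phi^{3}$ (approximately) for $j$ in the bulk. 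Hence $\E{\chi_j}\approx 1-0.472 \approx 0.528$, and the drift of $\Lambda_i(u)+\Lambda_i(v)-m$ per step is about $2(0.528)-1>0$.

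This positive drift means the maximum might sit near $i=1$. The argument therefore has to be exact rather than heuristic: at $i=1$ one needs $\Lambda_1(u)+\Lambda_1(v)\le n+O(1)$ in expectation, which the drift computation does not supply.

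\emph{Structural input.} The needed fact is structural. In a Boolean permutation, the Boolean splitting at each record (as in the proof of Theorem~\ref{thm:record-probabilities}(3)) shows that non-record positions essentially correspond to distinct generators $s_k$ with a specific orientation. The plan is to use the time-inhomogeneous Markov chain description of $\Unif{\mathcal B_n}$ from the paper. Under this chain, the record/non-record sequence of $u$ is a Markov chain on a finite state space, exponentially mixing and uniform in $n$ except near the boundaries.

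\emph{Upper bound, key step.} The expected maximum of
\[
S_m := \Lambda_{n-m+1}(u)+\Lambda_{n-m+1}(v)-m
\]
over $m$ must be shown to be $O(1)$, which requires the mean increment to be nonpositive. I would recompute the stationary non-record density from the exact formula. The exact bulk limit $F_{2n-2}/F_{2n-1}\to 1/\phi\approx 0.618$ gives non-record density $1-1/\phi\approx 0.382<1/2$. So the drift is $2(0.382)-1<0$, and $S_m$ is a walk with negative drift $-(3-\sqrt5)+\dots$ per step, plus boundary corrections near $j$ close to $n$ that are bounded because $F_{2j-4}F_{2(n-j)-1}/F_{2n-1}$ decays geometrically away from the ends.

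The main obstacle is turning negative drift into $\E{\max_m S_m^+}=O(1)$ for dependent increments. I would handle this with an exponential-moment bound: show that $\E{e^{\theta S_m}}\le Ce^{-cm}$ for small $\theta>0$, using the Markov chain's geometric ergodicity, or directly via a transfer-matrix/generating-function computation from the Fibonacci recursion, and the independence of $u$ and $v$. A union bound then gives
\[
\Prob{\max_m S_m>t}\le C'e^{-\theta t},
\]
hence bounded expectation. Combined with the lower bound, this yields $\E{\FS(u,v)}=n+O(1)$.
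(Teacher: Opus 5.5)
\textbf{Verdict: genuine gaps.} Your upper-bound reduction is fine, but the lower bound fails as stated, your drift justification rests on a false claim, and the key concentration step is only a plan.

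\textbf{Lower bound.} In \eqref{eq:mainFSEq} the maximum runs only over $1\le i\le 1+\max(\FS(u),\FS(v))$. This cutoff is essential: beyond it $\Lambda_i(u)=\Lambda_i(v)=0$ and $Y_i=i-1$ grows without bound. So you may evaluate at $i=n$ only when $\max(\FS(u),\FS(v))\ge n-1$. For instance, $u=v=\mathrm{id}\in\mathcal B_n$ gives $\FS(u,v)=1$. (For the upper bound the cutoff is harmless, since $m\le n$.) What holds deterministically is $\FS(u,v)\ge Y_{1+m}=m$, where $m:=\max(\FS(u),\FS(v))$. You then still need an expectation estimate. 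The set $\{\FS(w)\le n-t\}$ is in bijection with $\mathcal B_{n-t}$, so $\Prob{\FS(w)\le n-t}=F_{2n-2t-1}/F_{2n-1}\le 2^{-t}$. Independence then gives $\E{m}\ge n-\sum_{t\ge 1}4^{-t}=n-\tfrac13$, which is the paper's argument.

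\textbf{Drift.} You claim $F_{2j-4}F_{2(n-j)-1}/F_{2n-1}$ decays away from the ends, but this is false. It tends to $\phi^{-4}/\sqrt5\approx 0.065$ in the bulk and is small only near the ends. Hence the bulk record probability is $1-1/\sqrt5\approx 0.553$, not $1/\phi$. The drift of $S_m$ is therefore $2/\sqrt5-1\approx -0.106$, a much thinner margin than you state. The sign you need survives, but it requires an exact argument, which the paper supplies uniformly in $i$. The identity $F_{2n-6}=F_{2i-4}F_{2(n-i)-1}+F_{2i-5}F_{2(n-i)-2}$ gives $\E{\Delta_i(u,v)}\ge F_{2n-7}/F_{2n-1}\ge \tfrac1{27}$ for all $1\le i\le n-1$. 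The only wrong-sign step is $i=n$, since $\Prob{\rec_n(w)=1}=F_{2n-3}/F_{2n-1}<\tfrac12$, and it costs only $O(1)$.

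\textbf{Concentration step.} This is left as a plan, though it is a viable route different from the paper's. The paper proves geometric $\alpha$-mixing of $(\Delta_i)$ from the two-state presence chain, using a Dobrushin contraction $\le 1/6$. It then applies the Merlev\`ede--Peligrad--Rio Bernstein inequality, obtaining only the stretched-exponential bound $\Prob{Y_{n-k}\ge Y_n}\le \exp(-c'k/((\log k)(\log\log k)))$, which is enough for a convergent series.

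Your exponential-moment approach can be made rigorous as follows. By independence, $\E{e^{\theta S_m}}=e^{-\theta m}\bigl(\E{e^{\theta\Lambda_{n-m+1}(w)}}\bigr)^2$. The inner expectation is a ratio of weighted counts of tag words. The weights come from the transfer matrix on $\{0,S,C\}$, tilted by $e^{\theta}$ on transitions $x\to y$ with $x\in\{S,C\}$ and $y\ne S$; by Lemma~\ref{lem:chi-as-local}, these are exactly the non-records. (Note that $\chi$ itself is not Markov, contrary to what you say; it is a function of adjacent tag pairs, which the tilted matrix handles directly.) Primitivity of the untilted matrix, with Perron root $\phi^2$, controls the boundary factors uniformly in $n$ and $m$. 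The derivative of the log Perron root at $\theta=0$ is the stationary non-record density $1/\sqrt5<\tfrac12$. This yields $\E{e^{\theta S_m}}\le Ce^{-cm}$ for small $\theta>0$, and hence genuinely exponential tails for $\max_m S_m$, with no external mixing machinery.

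That would be a cleaner and stronger argument than the paper's, but you have to carry it out with the correct input $1/\sqrt5$, not the value you computed.
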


\begin{center}
\includegraphics[width=0.92\textwidth]{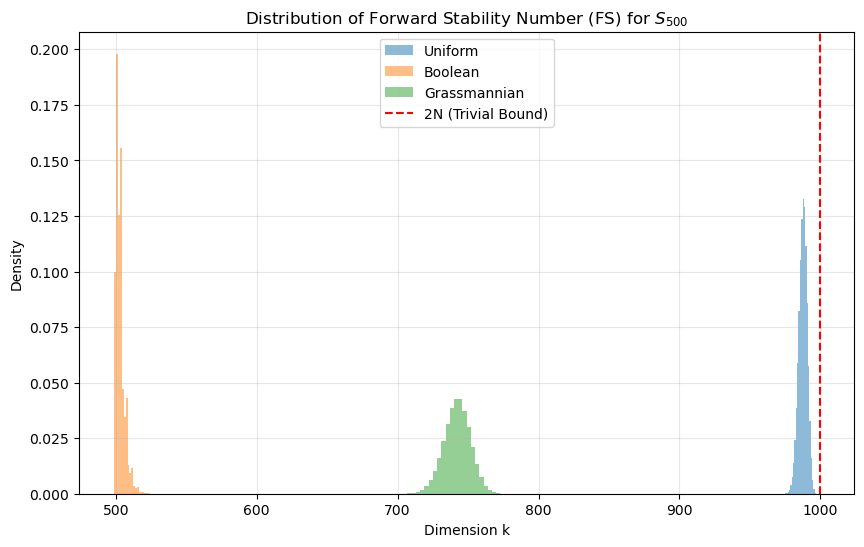}
\captionof{figure}{Empirical density histograms of $\FS(u,v)$ at $n=500$ for the three sampling models considered in the paper: uniform permutations, Boolean permutations, and Grassmannian permutations.}
\label{fig:fs-s500-distribution}
\end{center}

We obtain analogous results for $\BS(u,v)$ in Section~\ref{sec:backward-stability}. In the Boolean case, our analysis produces an explicit time-inhomogeneous Markov chain together with transition probabilities that can be turned into an exact uniform sampler for Boolean permutations which runs in linear time; see Remark~\ref{rem:boolean-sampling}.

\subsection{Conjectures and record-equidistribution phenomena}

Beyond the three primary families treated in this paper, our Monte Carlo experiments suggest a broad range of asymptotic behaviors for
$\E{\FS(u,v)}$ across classical permutation classes.  We formulate a collection of conjectures in Section~\ref{sec:conjectures} that organize these classes into three
regimes: a record-dense regime with $\E{\FS(u,v)}=n+o(n)$, an intermediate regime with $\E{\FS(u,v)}=cn+o(n)$ for some
$1<c<2$, and a record-sparse regime with $\E{\FS(u,v)}=2n-o(n)$.

A complementary theme is that the record-set statistic can be \emph{equidistributed} across distinct permutation classes, and in that
case the resulting forward stability distributions coincide.  We prove, in Section~\ref{sec:132-vs-231}, that the record-set statistic is equidistributed on the Catalan avoidance classes $\Av_n(132)$ and $\Av_n(231)$, and consequently $\FS(u,v)$ has the same distribution when $(u,v)$ is sampled uniformly from either class.

This example suggests a broader classification problem.  In the final part of the paper, we formulate a conjectural recursive criterion for when two patterns define avoidance classes with the same record-set distribution. Since $\FS(u,v)$ measures stabilization of Schubert products in the cohomology of $\GL_N/B$, exact coincidences in the $\FS$-distribution suggest that these classes may be shadowing a deeper commonality in the corresponding collections of Schubert products. A natural next question is whether there is a conceptual \emph{geometric} reason that forces distinct classes to have the same forward stability behavior.

\subsection{Outline of the paper}
In Section~\ref{sec:preliminaries}, we review the combinatorial construction of the forward stability process and the underlying record statistics, along with probabilistic preliminaries. Section~\ref{sec:uniform-expectation} explores the case of uniform permutations, where we derive the expectation and prove a central limit theorem for the stabilization number. Section~\ref{sec:grassmannian} analyzes Grassmannian permutations, focusing on the influence of the descent position on stabilization. Section~\ref{sec:boolean-expectation} investigates Boolean permutations, utilizing their characterization via reduced words and block factorizations. We derive exact counting formulas for left-to-right maxima using Fibonacci recurrences and the Tagiuri--Vajda identity, establishing linear expectation bounds for the stability number. In Section~\ref{sec:backward-stability}, we discuss the relationship between forward and backward stability via conjugation by the longest element. Finally, in the last two sections we suggest several open questions and future directions: Section~\ref{sec:conjectures} concerns the forward stability for other classes of permutations, and Section~\ref{sec:record-equivalence} concerns the record-set equivalence for pattern avoidance classes.

\section{Preliminaries and Notation}\label{sec:preliminaries}

In this section, we give notation for the increments of the random walk $\{Y_i(u,v)\}$ and provide some brief probabilistic preliminaries.

\subsection{Random walk increments}

As mentioned in the introduction, the \emph{forward stability statistic} $\FS(u,v)$ is the maximum of the random walk given by the $Y_i$:
\[
\FS(u,v) \;=\; \max_{1\le i\le m+1} Y_i(u, v).
\]
We now define notation for the increments of this random walk. For $1\le i\le n$, let
\[
\Delta_i(u, v) \;:=\; Y_{i+1}(u, v)-Y_i(u, v).
\]
Using $\Lambda_{i+1}(w)-\Lambda_i(w)=-\chi_i(w)$ (valid for all $i\ge 1$), we obtain
\begin{equation}\label{eq:delta_i}
\Delta_i(u, v) \;=\; 1-\chi_i(u)-\chi_i(v)\in \{-1,0,1\},
\end{equation}
and hence $|\Delta_i(u, v)|\le 1$.

Much of our analysis of $\FS(u,v)$ involves the increments $\Delta_i(u, v)$. The distribution of $\Delta_i$ depends on $i$, and (for some classes of permutations) these increments are mutually dependent.

Also note that for $u, v\sim\text{Unif}(\mathcal{W})$ and $m = \max{(\FS(u), \FS(v))}\leq n$, we have
\begin{equation}\label{eq:upper_inequality}
\FS(u,v) \;=\; \max_{1\leq i\leq m + 1}Y_i(u, v) \leq   \max_{1\leq i\leq n + 1}Y_i(u, v) =: \widetilde{\FS}_n(u,v),
\end{equation}
with equality when $m=n$.

\subsection{Some probabilistic preliminaries}

\begin{definition}[Total variation distance]\label{def:tv}
Let $\Omega$ be a finite set and let $\mu,\nu$ be probability measures on $\Omega$.
The \emph{total variation distance} is (see \cite[Proposition 4.2]{LevinPeres2017Markov})
\begin{equation}\label{tv-definition}
\|\mu-\nu\|_{\mathrm{TV}}
:=\sup_{A\subseteq\Omega}\left|\mu(A)-\nu(A)\right|
=\frac12\sum_{x\in\Omega}\left|\mu(\{x\})-\nu(\{x\})\right|.
\end{equation}
\end{definition}

\begin{definition}[Distributional equivalence]
We say $X \stackrel{d}{=} Y$ to mean that the
random variables $X$ and $Y$ are equal in distribution; that is,
$\Prob{X \le t} = \Prob{Y \le t}$ for all $t \in \mathbb{R}$.
\end{definition}

Next, we define $\alpha$-mixing, which will be the key probabilistic tool in the proof of Theorem~\ref{thm:BooleanExpectation}.

\begin{definition}\label{def:sigma-generated}
A \emph{$\sigma$-algebra} on a set $\Omega$ is a collection $\mathcal{H}$ of subsets of $\Omega$ such that:
(i) $\Omega\in\mathcal{H}$, (ii) if $A\in\mathcal{H}$ then $A^c\in\mathcal{H}$, and (iii) if $A_1,A_2,\dots\in\mathcal{H}$ then
$\bigcup_{m\ge 1}A_m\in\mathcal{H}$.

Let $(X_i)_{i\in\mathbb{Z}}$ be a sequence of random variables on a finite probability space $\Omega$.
For any finite collection of random variables $\mathcal{S}=(Y_1,\dots,Y_m)$, define an equivalence relation on $\Omega$ by
\[
\omega\sim_{\mathcal S}\omega'
\quad\Longleftrightarrow\quad
(Y_1(\omega),\dots,Y_m(\omega))=(Y_1(\omega'),\dots,Y_m(\omega')).
\]
The equivalence classes form a partition of $\Omega$; we call them the \emph{atoms} determined by $\mathcal S$.
We write $\sigma(\mathcal S)$ for the collection of all unions of these atoms.

Equivalently, $A\subseteq\Omega$ lies in $\sigma(\mathcal S)$ if and only if membership in $A$ is determined by the values of
$(Y_1,\dots,Y_m)$, meaning that whenever $\omega\sim_{\mathcal S}\omega'$ we have $\mathbf{1}_A(\omega)=\mathbf{1}_A(\omega')$.
In particular, $\sigma(\mathcal S)$ is a $\sigma$-algebra. If $\mathcal T$ is an infinite collection of random variables on $\Omega$, we define $\sigma(\mathcal T)$ to be the smallest
$\sigma$-algebra containing $\sigma(\mathcal S)$ for every finite subcollection $\mathcal S\subseteq\mathcal T$.
\end{definition}
\begin{definition}\label{def:rosenblatt-alpha}
For integers $k$ and $h\ge 1$, set
\[
\mathcal{M}_k:=\sigma(X_i:\ i\le k),
\qquad
\mathcal{G}_{k+h}:=\sigma(X_i:\ i\ge k+h).
\]
The (Rosenblatt) strong-mixing coefficient at lag $h$ is
\[
\alpha_X(h):=\sup_{k\in\mathbb{Z}}\ \sup_{A\in\mathcal{M}_k,\ B\in\mathcal{G}_{k+h}}
\left|\Prob{A\cap B}-\Prob{A}\Prob{B}\right|.
\]
We say that $(X_i)$ is \emph{strong mixing} if $\alpha_X(h)\to 0$ as $h\to\infty$, and it is \emph{geometrically strong mixing} if
\[
\alpha_X(h)\le C e^{-c h}
\qquad\text{for some }C,c>0\text{ and all }h\ge 1.
\]
\end{definition}

In our applications $(X_i)$ is a finite sequence, say $(X_i)_{i=0}^{N}$. We interpret its mixing coefficients by extending it to a bi-infinite
sequence $(\widetilde X_i)_{i\in\mathbb{Z}}$ as follows. Choose a fixed $x_\star$ in the state space of $(X_i)_{i=0}^{N}$, and set
\[
\widetilde X_i :=
\begin{cases}
x_\star, & i<0,\\
X_i, & 0\le i\le N,\\
x_\star, & i>N.
\end{cases}
\]
We then define $\alpha_X(h)$ to be the Rosenblatt coefficient $\alpha_{\widetilde X}(h)$ of the bi-infinite sequence $(\widetilde X_i)_{i\in\mathbb{Z}}$
from Definition~\ref{def:rosenblatt-alpha}; this definition is independent of the choice of $x_\star$, since the added coordinates are deterministic
and hence generate only trivial $\sigma$-algebras. This extension only serves to remove boundary effects. Indeed, since $\widetilde X_i$ is constant for
$i<0$ and for $i>N$, the $\sigma$-algebras $\sigma(\widetilde X_i:\ i\le k)$ and $\sigma(\widetilde X_i:\ i\ge k+h)$ are trivial whenever the index
set $\{i\le k\}$ or $\{i\ge k+h\}$ lies entirely outside $\{0,1,\dots,N\}$. For such values of $k$, the inner supremum over
$A\in\sigma(\widetilde X_i:\ i\le k)$ and $B\in\sigma(\widetilde X_i:\ i\ge k+h)$ equals $0$. Consequently, in the definition of
$\alpha_{\widetilde X}(h)$ it is enough to take the supremum over those $k$ for which both index sets $\{i\le k\}$ and $\{i\ge k+h\}$ intersect
$\{0,1,\dots,N\}$.

\section{Uniform Permutations}\label{sec:uniform-expectation}

We now determine the expectation of $\FS(u,v)=\max_{1\le i\le m+1} Y_i(u,v)$ for $u,v\sim\Unif{S_n}$, where $m=\max(\FS(u),\FS(v))$.

\begin{lemma}[R{\'e}nyi's record theorem]\label{lem:renyi_records}
Fix $n\ge 1$ and let $w\sim\Unif{S_n}$. Then the record indicators $\rec_1(w),\dots,\rec_n(w)$ are mutually independent and satisfy, for each $1\le j\le n$,
\[
\Prob{\rec_j(w)=1}=\E{\rec_j(w)}=\frac{1}{j}.
\]
\end{lemma}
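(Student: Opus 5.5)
The plan is to encode a permutation by its left-inversion counts and use the fact that this encoding is a bijection. Consider the map
\[
\Phi: S_n \to [0,0]\times[0,1]\times\cdots\times[0,n-1],\qquad w\mapsto (d_1(w),d_2(w),\dots,d_n(w)),
\]
where $[0,k]=\{0,1,\dots,k\}$. Note that $d_j(w)\le j-1$, since there are only $j-1$ indices $k<j$. Both sides have cardinality $n!$, so it suffices to show $\Phi$ is injective, or equivalently to reconstruct $w$ from $(d_1,\dots,d_n)$.

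\emph{Reconstruction.} We read the sequence from the right. The value $d_n$ determines $w(n)$: exactly $d_n$ of the remaining values exceed $w(n)$, so $w(n)=n-d_n$. Removing that value, $d_{n-1}$ determines $w(n-1)$ as the $(d_{n-1}+1)$-th largest of the remaining $n-1$ values, and so on. More precisely, at step $j$ we know the set $R_j=\{w(1),\dots,w(j)\}$, and $w(j)$ is forced to be the element of $R_j$ with exactly $d_j$ larger elements in $R_j$. Any sequence with $0\le d_j\le j-1$ is realizable by this procedure, so $\Phi$ is a bijection (the Lehmer-code argument).

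\emph{Independence.} Under $w\sim\Unif{S_n}$, the pushforward through the bijection $\Phi$ is uniform on the product set. Therefore $d_1,\dots,d_n$ are mutually independent, with $d_j$ uniform on $\{0,\dots,j-1\}$. Since $\rec_j(w)=\mathbf 1\{d_j(w)=0\}$ is a function of $d_j$ alone, the indicators $\rec_1,\dots,\rec_n$ are mutually independent. Moreover, $\Prob{\rec_j=1}=\Prob{d_j=0}=1/j$, and this equals $\E{\rec_j}$ because $\rec_j$ is an indicator.

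The only step needing care is the bijectivity of $\Phi$; this is a standard fact, and the reconstruction above (equivalently, the counting argument together with injectivity) settles it. An alternative route would be to use the fact that the relative order of $w(1),\dots,w(j)$ is uniform and independent of the relative orders within later prefixes. However, the Lehmer-code bijection gives full mutual independence in one stroke, so it is the route I would take.
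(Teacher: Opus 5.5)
Your proof is correct and complete. The map $w\mapsto(d_1(w),\dots,d_n(w))$ is a bijection onto $\prod_{j=1}^n\{0,\dots,j-1\}$, so the $d_j$ are independent and uniform, and $\rec_j=\mathbf{1}\{d_j=0\}$ inherits both properties. The paper does not prove this lemma itself; it only cites R\'enyi's Lemma~1. Your inversion-table (sequential-rank) argument is the standard proof of that classical fact, so it makes the citation self-contained.
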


\begin{proof}
See R{\'e}nyi~\cite[Lemma~1]{Renyi1962}. \qedhere
\end{proof}

Since $\chi_j(w)=1-\rec_j(w)$, we also have
\begin{equation} \label{eq:chi_marginals}
\Prob{\chi_j(w)=0}=\frac{1}{j},
\qquad
\E{\chi_j(w)}=\frac{j-1}{j}.
\end{equation}

\begin{lemma}\label{lem:Yi_expectation}
Fix $n\ge 1$, and let $u,v\sim \Unif{S_n}$. Then for $1\le i\le n+1$,
\[
\E{Y_i(u, v)}=2n-i+1-2H_n+2H_{i-1},
\]
where $H_i := \sum_{k=1}^{i}\frac{1}{k}$ and $H_0:=0$.
\end{lemma}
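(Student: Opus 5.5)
Since $Y_i(u,v)=\Lambda_i(u)+\Lambda_i(v)+i-1$, linearity of expectation gives
\[
\E{Y_i(u,v)}=\E{\Lambda_i(u)}+\E{\Lambda_i(v)}+i-1 .
\]
Because $u$ and $v$ have the same distribution, it suffices to compute $\E{\Lambda_i(w)}$ for $w\sim\Unif{S_n}$. Independence of $u$ and $v$ is not needed here, only linearity.

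For $w\in S_n$, viewed inside $S_{\mathbb{Z}_+}$, every $j>n$ satisfies $w(j)=j>w(k)$ for all $k<j$. Hence $\chi_j(w)=0$ for $j>n$, and
\[
\Lambda_i(w)=\sum_{j=i}^{n}\chi_j(w)
\]
for $1\le i\le n+1$, with the empty sum when $i=n+1$. By \eqref{eq:chi_marginals}, which comes from Lemma~\ref{lem:renyi_records}, $\E{\chi_j(w)}=1-\frac1j$. Summing over $j$ gives
\[
\E{\Lambda_i(w)}=\sum_{j=i}^{n}\Bigl(1-\frac1j\Bigr)=(n-i+1)-(H_n-H_{i-1}).
\]

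Doubling this and adding $i-1$ yields
\[
2(n-i+1)-2H_n+2H_{i-1}+i-1=2n-i+1-2H_n+2H_{i-1},
\]
as claimed. There is no real obstacle. The only points to check are the boundary case $i=n+1$, where both sides give $n-i+1=0$ and $H_{n}-H_{n}=0$, and the vanishing of $\chi_j$ beyond $n$, which justifies truncating the sum at $n$.
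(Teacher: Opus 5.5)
Your proof is correct and follows the paper's argument: linearity of expectation, the marginal $\mathbb{E}[\chi_j(w)]=1-1/j$ from R\'enyi's lemma summed over $i\le j\le n$, and the fact that $u$ and $v$ have the same distribution. Your explicit checks that $\chi_j(w)=0$ for $j>n$ and of the boundary case $i=n+1$ are details the paper leaves implicit.
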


\begin{proof}
Let $w\sim\Unif{S_n}$.
By linearity of expectation and~\eqref{eq:chi_marginals},
  \[
  \E{\Lambda_i(w)}=\sum_{j=i}^{n}\E{\chi_j(w)}=(n-i+1)-(H_n-H_{i-1}).
  \]

  Since $u\sim\Unif{S_n}$ and $v\sim\Unif{S_n}$, we have
  $\E{\Lambda_i(u)}=\E{\Lambda_i(v)}$, and therefore
  \[
  \E{Y_i(u, v)}=\E{\Lambda_i(u)}+\E{\Lambda_i(v)}+i-1=2\E{\Lambda_i(w)}+i-1=2n-i+1-2H_n+2H_{i-1}. \qedhere
  \]
\end{proof}

\begin{corollary}\label{lem:prob_delta_i}
For $u, v\sim\Unif{S_n}$, and $1\leq i\leq n$, $\Prob{\Delta_i(u, v)=1}=\dfrac{1}{i^2}$.
\end{corollary}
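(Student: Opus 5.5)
By \eqref{eq:delta_i}, the increment is $\Delta_i(u,v)=1-\chi_i(u)-\chi_i(v)$, and each $\chi_i$ takes values in $\{0,1\}$. So the plan is to reduce the event $\{\Delta_i(u,v)=1\}$ to record events at position $i$ in $u$ and $v$ separately, and then use independence. Since $\chi_i(u)+\chi_i(v)\ge 0$, we have $\Delta_i(u,v)=1$ exactly when $\chi_i(u)=\chi_i(v)=0$. Equivalently, $\rec_i(u)=1$ and $\rec_i(v)=1$, that is, position $i$ is a record of both $u$ and $v$.

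Next I use that $u$ and $v$ are drawn independently from $\Unif{S_n}$; this is the standing convention here, as in Lemma~\ref{lem:Yi_expectation}. The two events $\{\rec_i(u)=1\}$ and $\{\rec_i(v)=1\}$ are then independent, so
\[
\Prob{\Delta_i(u,v)=1}=\Prob{\rec_i(u)=1}\,\Prob{\rec_i(v)=1}.
\]
By Lemma~\ref{lem:renyi_records}, or equivalently \eqref{eq:chi_marginals}, each factor equals $1/i$ for $1\le i\le n$. Multiplying gives $1/i^2$.

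There is no real obstacle in this argument. The only points to check are that the range $1\le i\le n$ is exactly where R\'enyi's formula applies, and that independence is used only between $u$ and $v$. No independence among the record events within a single permutation is needed.
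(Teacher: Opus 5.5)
Your proposal is correct and matches the paper's proof: both use \eqref{eq:delta_i} to reduce $\{\Delta_i(u,v)=1\}$ to $\{\chi_i(u)=0,\ \chi_i(v)=0\}$. Both then combine the independence of $u$ and $v$ with the marginal $\Prob{\chi_i(w)=0}=1/i$ from \eqref{eq:chi_marginals} to get $1/i^2$.
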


\begin{proof}
  Fix $1\le i\le n$.  By \eqref{eq:delta_i}, we have $\Delta_i(u,v)=1-\chi_i(u)-\chi_i(v)$,
  so $\Delta_i(u,v)\in\{-1,0,1\}$.  Since $u, v\sim\Unif{S_n}$ are independent, the random variables $\chi_i(u)$ and $\chi_i(v)$ are independent.    By~\eqref{eq:chi_marginals}, $\Prob{\chi_i(w)=0}=\frac{1}{i}$ for $w\sim\Unif{S_n}$, and therefore,
  \[
  \Prob{\Delta_i(u,v)=1}
  =\Prob{\chi_i(u)=0,\ \chi_i(v)=0}
  =\frac{1}{i}\cdot\frac{1}{i}
  =\frac{1}{i^2}.  \qedhere
  \]
  \end{proof}
 \subsection{An upper bound}

We next bound the expected overshoot of the walk $\{Y_i(u,v)\}$ above a fixed index $i$ by comparing its increments to the indicators of upward steps.
\begin{lemma}\label{lem:max_bound}
Fix $n\ge 1$, and let $u,v\sim\Unif{S_n}$.  For every $1\le i\le n$,
\[
\E*{\max_{i\le k\le n+1}\left(Y_k(u,v)-Y_i(u,v)\right)}
\;\le\;\sum_{j=i}^{n}\frac{1}{j^2}.
\]
\end{lemma}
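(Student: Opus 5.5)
The plan is to write the overshoot as a telescoping sum of increments and then bound the positive part of each increment by the indicator of an upward step. For $i\le k\le n+1$ we have
\[
Y_k(u,v)-Y_i(u,v)=\sum_{j=i}^{k-1}\Delta_j(u,v).
\]
This uses the increments $\Delta_j$ from \eqref{eq:delta_i}. Note that all $Y_k$ with $k\le n+1$ are defined here: $u,v\in S_n$ forces $\chi_j=0$ for $j>n$, so the formula for $\Lambda_k$ makes sense for every $k\le n+1$.

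Since $\Delta_j\in\{-1,0,1\}$, we have $\Delta_j\le \mathbf 1\{\Delta_j=1\}$. Each indicator is nonnegative, so for every $k$ in the range
\[
Y_k-Y_i\le \sum_{j=i}^{k-1}\mathbf 1\{\Delta_j=1\}\le \sum_{j=i}^{n}\mathbf 1\{\Delta_j(u,v)=1\}.
\]
The case $k=i$ gives $0$, which is also bounded by the right-hand side. The right-hand side does not depend on $k$, so taking the maximum over $i\le k\le n+1$ gives
\[
\max_{i\le k\le n+1}\bigl(Y_k-Y_i\bigr)\le \sum_{j=i}^{n}\mathbf 1\{\Delta_j(u,v)=1\}
\]
pointwise.

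Next I take expectations and apply linearity:
\[
\E*{\max_{i\le k\le n+1}\bigl(Y_k-Y_i\bigr)}\le\sum_{j=i}^n\Prob{\Delta_j(u,v)=1}.
\]
By Corollary~\ref{lem:prob_delta_i}, each term equals $1/j^2$, which is exactly the claimed bound. Independence of $u$ and $v$ enters only through that corollary.

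There is no real obstacle here. The only point needing care is the index bookkeeping, namely that the increments $\Delta_j$ for $i\le j\le n$ cover the whole range up to $Y_{n+1}$.
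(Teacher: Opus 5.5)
Your proposal is correct and follows the paper's own proof. Both write $Y_k-Y_i$ as a sum of the increments $\Delta_j$ and bound each $\Delta_j$ pointwise by $\mathbf{1}\{\Delta_j=1\}$. Taking the maximum over $k$ and applying linearity of expectation with Corollary~\ref{lem:prob_delta_i} then gives the bound.
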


\begin{proof}
Fix $i\in\{1,\dots,n\}$.  For $k\ge i$ we may write
\[
Y_k(u,v)-Y_i(u,v)=\sum_{j=i}^{k-1}\Delta_j(u,v).
\]
Define $I_j(u,v):=\mathbf{1}\{\Delta_j(u,v)=1\}$ for $j\ge i$.  Since $\Delta_j(u,v)\in\{-1,0,1\}$, we have the pointwise inequality
\[
\Delta_j(u,v)\le I_j(u,v)\qquad (j\ge i).
\]
Hence for every $k\in\{i,\dots,n+1\}$,
\[
Y_k(u,v)-Y_i(u,v)
=\sum_{j=i}^{k-1}\Delta_j(u,v)
\le \sum_{j=i}^{k-1}I_j(u,v)
\le \sum_{j=i}^{n}I_j(u,v),
\]
since the partial sums of $\{I_j(u,v)\}$ are nondecreasing in $k$.
Taking the maximum over $k\in\{i,\dots,n+1\}$ gives the pathwise bound
\[
\max_{i\le k\le n+1}\left(Y_k(u,v)-Y_i(u,v)\right)\le \sum_{j=i}^{n}I_j(u,v).
\]
Taking expectations and using linearity, together with Corollary~\ref{lem:prob_delta_i}, yields
\[
\E*{\max_{i\le k\le n+1}\left(Y_k(u,v)-Y_i(u,v)\right)}
\le \sum_{j=i}^{n}\Prob{\Delta_j(u,v)=1}
=\sum_{j=i}^{n}\frac{1}{j^2}. \qedhere
\]
\end{proof}

\begin{proposition}
For $u,v\sim\Unif{S_n}$
\[
\E{\FS(u,v)} = 2n-2\ln n-2\gamma + c + o(1),
\]
where $c\in[1,\pi^2/6]$ and $\gamma$ is the Euler--Mascheroni constant.
\end{proposition}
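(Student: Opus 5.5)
The plan is to sandwich $\E{\FS(u,v)}$ between the expectations of two explicit statistics of the walk $\{Y_i(u,v)\}$, using Lemma~\ref{lem:Yi_expectation} for the main term and Lemma~\ref{lem:max_bound} for the overshoot. Throughout I use $H_n=\ln n+\gamma+o(1)$. By Lemma~\ref{lem:Yi_expectation},
\[
\E{Y_1(u,v)}=2n-2H_n,\qquad \E{Y_2(u,v)}=2n+1-2H_n .
\]

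\textbf{Lower bound ($c\ge 1$).} Since $\FS(w)\ge 1$ for every $w$, we have $m=\max(\FS(u),\FS(v))\ge 1$. Hence the index $i=2$ is always admissible in \eqref{eq:mainFSEq}, which gives $\FS(u,v)\ge Y_2(u,v)$ pointwise. Taking expectations yields
\[
\E{\FS(u,v)}\ge 2n+1-2H_n=2n-2\ln n-2\gamma+1+o(1).
\]
Equivalently, position $1$ is always a record, so $\Delta_1=1$ and $Y_2=Y_1+1$.

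\textbf{Upper bound ($c\le \pi^2/6$).} By \eqref{eq:upper_inequality}, $\FS(u,v)\le \widetilde{\FS}_n(u,v)$. I then write
\[
\widetilde{\FS}_n(u,v)=Y_1(u,v)+\max_{1\le k\le n+1}\bigl(Y_k(u,v)-Y_1(u,v)\bigr).
\]
Lemma~\ref{lem:max_bound} with $i=1$ bounds the expectation of the second term by $\sum_{j=1}^n j^{-2}\le \pi^2/6$. Therefore
\[
\E{\FS(u,v)}\le 2n-2\ln n-2\gamma+\pi^2/6+o(1).
\]

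\textbf{Existence of the constant $c$.} This is the step needing the most care, since the bounds above only confine $\E{\FS(u,v)}-(2n-2H_n)$ to $[1,\pi^2/6]$ up to $o(1)$. The plan has two parts.

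First, I handle the overshoot $M_n:=\max_{1\le k\le n+1}(Y_k-Y_1)$. By Lemma~\ref{lem:renyi_records}, the record indicators $\rec_j$ for $j\le n$ are independent with laws $1/j$ that do not depend on $n$. Hence the increments $(\Delta_j)_{j\le n}$ have the same joint law as the first $n$ increments of a single infinite process. So $M_n$ can be coupled to be nondecreasing in $n$, and it is bounded in expectation by $\pi^2/6$. Monotone convergence then gives $\E{M_n}\to c$ for some $c\in[1,\pi^2/6]$.

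Second, I compare $\FS(u,v)$ with $\widetilde{\FS}_n(u,v)$. On the event $m<n$, the positions $m+1,\dots,n$ are fixed points of both $u$ and $v$, hence records of both. So $\Delta_j=+1$ there, and $0\le \widetilde{\FS}_n-\FS\le n-m$. Now $\Prob{n-m\ge t}\le 2/(n(n-1)\cdots(n-t+1))$, because it requires $u$ or $v$ to fix the last $t$ positions. Summing over $t\ge 1$ gives $\E{n-m}=O(1/n)$.

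Combining the two parts, $\E{\FS(u,v)}=2n-2H_n+c+o(1)$, which is the claim.
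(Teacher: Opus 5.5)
Your proof is correct and follows essentially the same route as the paper. Anchoring at $Y_1$ instead of $Y_2$ is immaterial since $\Delta_1=1$, your monotone coupling of the overshoot via R\'enyi's independence is exactly the paper's argument that $a_n$ converges, and your tail-sum bound on $\E{n-m}$ does the same job as the paper's estimate $0\le\widetilde{\FS}_n-\FS(u,v)\le n\,\mathbf{1}\{m<n\}$ with $\Prob{m<n}=1/n^2$ (note that the event $n-m\ge t$ actually requires both $u$ and $v$ to fix the last $t$ positions, so your bound holds with room to spare).
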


\begin{proof}
Fix $u,v\sim\Unif{S_n}$ and set $m=\max(\FS(u),\FS(v))$.
By~\eqref{eq:upper_inequality}, we have $\FS(u,v)\le \widetilde{\FS}_n(u,v)$, with equality when $m=n$. When $m<n$, both $u(n)=n$ and $v(n)=n$, so $\Prob{m<n}=\Prob{\rec_n(u)=1}\Prob{\rec_n(v)=1}=1/n^2$. On this event, $Y_i(u,v)=i-1$ for $i\ge m+2$, hence $\widetilde{\FS}_n(u,v)=\max\{n,\FS(u,v)\}$ and
\[
0\le \widetilde{\FS}_n(u,v)-\FS(u,v)\le n\,\mathbf{1}\{m<n\}.
\]
Taking expectations gives $\E{\widetilde{\FS}_n(u,v)}-\E{\FS(u,v)}\le n\cdot n^{-2}=o(1)$, so it suffices to prove the stated asymptotics for $\E{\widetilde{\FS}_n(u,v)}$.

Since $\chi_1(w)=0$ for every $w$, we have $\Delta_1(u,v)=1$ and $\widetilde{\FS}_n(u,v)=\max_{2\le i\le n+1}Y_i(u,v)$. Decomposing,
\[
\widetilde{\FS}_n(u,v)
=Y_2(u,v)+\max_{2\le k\le n+1}\left(Y_k(u,v)-Y_2(u,v)\right).
\]
By Lemma~\ref{lem:max_bound} with $i=2$,
\[
0\le \E{\widetilde{\FS}_n(u,v)}-\E{Y_2(u,v)}\le \sum_{j=2}^{n}\frac{1}{j^2}.
\]
By Lemma~\ref{lem:Yi_expectation}, $\E{Y_2(u,v)}=2n+1-2H_n$, and using $H_n=\ln n+\gamma+o(1)$ and $\sum_{j=2}^{n}j^{-2}=\pi^2/6-1+o(1)$,
\[
2n-2\ln n-2\gamma+1+o(1)
\;\le\;
\E{\widetilde{\FS}_n(u,v)}
\;\le\;
2n-2\ln n-2\gamma+\frac{\pi^2}{6}+o(1).
\]

It remains to show that the additive term converges. Define
\[
Z_n(u,v):=\max_{2\le k\le n+1}\left(Y_k(u,v)-Y_2(u,v)\right)\ge 0,
\]
so $\widetilde{\FS}_n(u,v)=Y_2(u,v)+Z_n(u,v)$. Setting $I_j(u,v):=\mathbf{1}\{\Delta_j(u,v)=1\}$, the proof of Lemma~\ref{lem:max_bound} (with $i=2$) gives
\[
0\le \E{Z_n(u,v)}\le \sum_{j=2}^{n}\E{I_j(u,v)}
=\sum_{j=2}^{n}\Prob{\Delta_j(u,v)=1}
=\sum_{j=2}^{n}\frac{1}{j^2}.
\]

Set $a_n:=\E{Z_n(u,v)}$ for $u,v\sim\Unif{S_n}$. We claim $(a_n)$ is nondecreasing. Fix $N>n$ and sample $u,v\sim\Unif{S_N}$. Define
\[
Z_n^{(N)}(u,v):=\max_{2\le k\le n+1}\sum_{j=2}^{k-1}\Delta_j(u,v),
\]
which depends only on $\{\chi_j(u),\chi_j(v):2\le j\le n\}$. By Lemma~\ref{lem:renyi_records} and independence, these indicators have the same joint law as in $S_n$, so $\E[Z_n^{(N)}(u,v)]=a_n$. Since $Z_N(u,v)\ge Z_n^{(N)}(u,v)$ pathwise,
\[
a_N=\E{Z_N(u,v)}\ge \E{Z_n^{(N)}(u,v)}=a_n.
\]
Thus $(a_n)$ is nondecreasing and bounded above by $\pi^2/6-1$, so the limit $C:=\lim_{n\to\infty}a_n$ exists with $0\le C\le \pi^2/6-1$.

Finally,
\[
\E{\FS(u,v)}
=\E{\widetilde{\FS}_n(u,v)}+o(1)
=\left(2n+1-2H_n\right)+C+o(1)
=2n-2\ln n-2\gamma+c+o(1),
\]
where $c:=1+C\in[1,\pi^2/6]$. \qedhere
\end{proof}

\subsection{Asymptotic Distribution}

Having established bounds on the expectation, we now turn to the limiting distribution of the statistic $\FS(u, v)$. We show that $\FS(u,v)=Y_2(u,v)+o_p(\sqrt{\ln n})$, so after normalization by $\sqrt{\ln n}$ the contribution from the maximization over the tail vanishes and the fluctuations are governed by $Y_2(u,v)$. This implies that in the limit the forward stability number follows a Gaussian distribution.

Recall that $m=\max(\FS(u),\FS(v))$ and $\FS(u,v)=\max_{1\le i\le m+1}Y_i(u,v)$. We decompose the statistic relative to the value at $i=2$:
\[
\FS(u, v) = Y_2(u,v) + \max_{1 \le i \le m+1} \left(Y_i(u,v) - Y_2(u,v)\right).
\]
Let
\[
E_n(u, v) := \max_{1 \le i \le m+1} \left(Y_i(u,v) - Y_2(u,v)\right)
\]
denote the \emph{excess} achieved by the process relative to its value at index $2$. Note that $E_n \ge 0$ since the maximum includes the term $Y_2(u,v)-Y_2(u,v)=0$.

The random variable $Y_2(u,v)$ is determined entirely by the number of records (left-to-right maxima) in the permutations $u$ and $v$.

\begin{proposition}[CLT for records]\label{prop:record_clt}
Let $w\sim\Unif{S_n}$ and let
\[
R_n(w):=\sum_{j=1}^n \rec_j(w)
\]
be the number of left-to-right maxima of $w$. Then
\[
\frac{R_n(w)-\ln n}{\sqrt{\ln n}} \xrightarrow{d} \mathcal{N}(0,1).
\]
\end{proposition}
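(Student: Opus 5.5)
By Lemma~\ref{lem:renyi_records}, $R_n(w)=\sum_{j=1}^n \rec_j(w)$ is a sum of independent Bernoulli random variables, with $\rec_j(w)\sim\mathrm{Bernoulli}(1/j)$. The plan is therefore to apply the Lindeberg--Feller central limit theorem for triangular arrays of independent, non-identically distributed summands. First compute the moments:
\[
\E{R_n}=H_n=\ln n+\gamma+o(1),
\qquad
s_n^2:=\Var{R_n}=\sum_{j=1}^n\frac1j\Bigl(1-\frac1j\Bigr)=H_n-\sum_{j=1}^n\frac1{j^2}=\ln n+O(1).
\]
In particular $s_n^2\to\infty$.

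Next, verify the Lindeberg (or the stronger Lyapunov) condition. The centered summands $\rec_j(w)-1/j$ are bounded by $1$ in absolute value, while $s_n\to\infty$. Hence for any fixed $\varepsilon>0$ and all large $n$, every event $\{|\rec_j-1/j|>\varepsilon s_n\}$ is empty, so the Lindeberg sum vanishes identically. Alternatively, Lyapunov's condition with third moments gives
\[
\frac{1}{s_n^{3}}\sum_{j=1}^n \E{|\rec_j-1/j|^3}\le \frac{1}{s_n^{3}}\sum_{j=1}^n \Var{\rec_j}=s_n^{-1}\to0.
\]
Either route yields $(R_n-H_n)/s_n\xrightarrow{d}\mathcal N(0,1)$.

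Finally, replace the exact centering and scaling by $\ln n$ and $\sqrt{\ln n}$. We have $s_n/\sqrt{\ln n}\to1$, and $(H_n-\ln n)/\sqrt{\ln n}=O(1/\sqrt{\ln n})\to0$. By Slutsky's theorem,
\[
\frac{R_n-\ln n}{\sqrt{\ln n}}=\frac{s_n}{\sqrt{\ln n}}\cdot\frac{R_n-H_n}{s_n}+\frac{H_n-\ln n}{\sqrt{\ln n}}\xrightarrow{d}\mathcal N(0,1).
\]

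There is no real obstacle here. The only point needing care is that the laws depend on $n$ (a triangular array); this is harmless, because by Lemma~\ref{lem:renyi_records} the law of $(\rec_1,\dots,\rec_j)$ does not depend on $n\ge j$. Alternatively, one could avoid the CLT machinery and compute directly from the generating function $\E{t^{R_n}}=\prod_{j=1}^n\frac{t+j-1}{j}$. Setting $t=e^{i\theta/\sqrt{\ln n}}$, expanding $\log$ of each factor to second order, and summing shows that the characteristic function converges to $e^{-\theta^2/2}$. I would present the Lindeberg argument as the main proof.
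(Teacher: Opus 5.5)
Your proposal is correct and follows essentially the same route as the paper: R\'enyi independence, Lyapunov's condition with third moments, and Slutsky's theorem to pass from $(H_n,s_n)$ to $(\ln n,\sqrt{\ln n})$. Your Lyapunov bound $\sum_j \E{|\rec_j-1/j|^3}\le s_n^2$, which gives the ratio $s_n^{-1}\to 0$ directly, is slightly cleaner than the paper's version, which bounds the third-moment sum by $H_n$ and then uses $s_n^2\ge \tfrac12(H_n-1)$.
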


\begin{proof}
By Lemma~\ref{lem:renyi_records}, the indicators $\rec_1(w),\dots,\rec_n(w)$ are mutually independent with
$\Prob{\rec_j(w)=1}=1/j$. Hence $R_n(w)=\sum_{j=1}^n \rec_j(w)$ is a sum of independent Bernoulli random variables.
Set $p_j:=1/j$ and $s_n^2:=\Var{R_n(w)}=\sum_{j=1}^n p_j(1-p_j)$.
Thus, we have $R_n(w) - H_n = \sum_{j=1}^n (\rec_j(w) - p_j)$, where $H_n:=\sum_{j=1}^n p_j$.
We first prove the central limit theorem with the natural centering and scaling,
\[
\frac{R_n(w)-H_n}{s_n}\xrightarrow{d}\mathcal{N}(0,1).
\]
To verify Lyapunov's condition with $\delta=1$, note that for each Bernoulli random variable,
\[
\E*{\left|\,\rec_j(w)-p_j\,\right|^3}\le \E*{\left|\,\rec_j(w)-p_j\,\right|^2}=p_j(1-p_j)\le p_j.
\]
Therefore
\[
\sum_{j=1}^n \E*{\left|\,\rec_j(w)-p_j\,\right|^3}\le \sum_{j=1}^n \frac1j = H_n.
\]
Moreover, for $j\ge 2$ we have $1-p_j=1-1/j\ge 1/2$, so
\[
s_n^2=\sum_{j=1}^n p_j(1-p_j)\ge \sum_{j=2}^n \frac1j\cdot\frac12=\frac12(H_n-1).
\]
Consequently,
\[
\frac{\sum_{j=1}^n \E*{\left|\,\rec_j(w)-p_j\,\right|^3}}{s_n^3}
\;\le\;
\frac{H_n}{\left(\tfrac12(H_n-1)\right)^{3/2}}
\;\xrightarrow[n\to\infty]{}\;0,
\]
and Lyapunov's theorem gives
\[
\frac{R_n(w)-H_n}{s_n}\xrightarrow{d}\mathcal{N}(0,1).
\]

Finally, since $H_n=\ln n+O(1)$ and
\[
s_n^2=\sum_{j=1}^n \frac1j\left(1-\frac1j\right)=H_n-\sum_{j=1}^n\frac1{j^2}=\ln n+O(1),
\]
we have $\frac{s_n}{\sqrt{\ln n}}\to 1$ and $\frac{H_n-\ln n}{\sqrt{\ln n}}\to 0$. Hence
\[
\frac{R_n(w)-\ln n}{\sqrt{\ln n}}
=
\frac{R_n(w)-H_n}{s_n}\cdot \frac{s_n}{\sqrt{\ln n}}
+\frac{H_n-\ln n}{\sqrt{\ln n}}
\xrightarrow{d}\mathcal N(0,1)
\]
by Slutsky's theorem (see \cite[Lemma~2.8 (i)]{vanDerVaart1998AsymptoticStatistics}).
\end{proof}

\begin{lemma}\label{lemma:distY2}
Let $u,v\sim\Unif{S_n}$ be independent. Then
\[
\frac{Y_2(u,v)-(2n-2\ln n)}{\sqrt{2\ln n}}
\;\xrightarrow{d}\;
\mathcal{N}(0,1)
\qquad\text{as }n\to\infty.
\]
\end{lemma}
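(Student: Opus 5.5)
We first express $Y_2(u,v)$ through the record counts. By definition $Y_2(u,v)=\Lambda_2(u)+\Lambda_2(v)+1$. For $w\in S_n$ we have $\Lambda_2(w)=\sum_{j=2}^{n}\chi_j(w)$. Since $\chi_1(w)=0$ and $\chi_j(w)=1-\rec_j(w)$, this gives $\Lambda_2(w)=(n-1)-(R_n(w)-1)=n-R_n(w)$. Here $R_n$ is the record count from Proposition~\ref{prop:record_clt}. Hence
\[
Y_2(u,v)=2n+1-R_n(u)-R_n(v),
\]
and therefore
\[
\frac{Y_2(u,v)-(2n-2\ln n)}{\sqrt{2\ln n}}
=-\frac{1}{\sqrt2}\left(\frac{R_n(u)-\ln n}{\sqrt{\ln n}}+\frac{R_n(v)-\ln n}{\sqrt{\ln n}}\right)+\frac{1}{\sqrt{2\ln n}}.
\]

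Next we apply Proposition~\ref{prop:record_clt} to each summand. Each normalized record count $A_n:=(R_n(u)-\ln n)/\sqrt{\ln n}$ and $B_n:=(R_n(v)-\ln n)/\sqrt{\ln n}$ converges in distribution to $\mathcal N(0,1)$. Since $u$ and $v$ are independent, $A_n$ and $B_n$ are independent for each $n$. Hence the pair $(A_n,B_n)$ converges jointly in distribution to $(Z_1,Z_2)$ with $Z_1,Z_2$ i.i.d.\ standard normal. To justify this, note that the joint characteristic function factors as $\varphi_{A_n}(s)\varphi_{B_n}(t)\to e^{-s^2/2}e^{-t^2/2}$, and apply L\'evy's continuity theorem.

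By the continuous mapping theorem, $-(A_n+B_n)/\sqrt2\xrightarrow{d}-(Z_1+Z_2)/\sqrt2\sim\mathcal N(0,1)$. The deterministic term $1/\sqrt{2\ln n}\to0$, so Slutsky's theorem completes the proof.

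The only mildly delicate step is the joint convergence of independent components. The characteristic function factorization handles it directly. Alternatively, one can bypass it: $R_n(u)+R_n(v)$ is a sum of $2n$ independent Bernoulli variables with parameters $1/j$, each appearing twice. Rerunning the Lyapunov argument from Proposition~\ref{prop:record_clt} with variance $2s_n^2$ then gives the result directly.
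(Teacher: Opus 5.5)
Your proposal is correct and follows the paper's proof essentially step for step. Both rewrite $Y_2(u,v)=2n+1-R_n(u)-R_n(v)$, obtain joint convergence of the two normalized record counts from the factorized characteristic function, and finish with the continuous mapping theorem and Slutsky's theorem.
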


\begin{proof}
For $w\in S_n$,
\[
\Lambda_2(w)=\sum_{j=2}^n \chi_j(w)=\sum_{j=2}^n \left(1-\rec_j(w)\right)
=(n-1)-\sum_{j=2}^n \rec_j(w)=n-R_n(w),
\]
since $\rec_1(w)=1$ and $R_n(w)=\sum_{j=1}^n \rec_j(w)$.
Consequently,
\begin{equation}\label{eq:Y2-records}
Y_2(u,v)=\Lambda_2(u)+\Lambda_2(v)+1
=2n+1-\left(R_n(u)+R_n(v)\right).
\end{equation}
Hence, we obtain
\[
\frac{Y_2(u,v)-(2n-2\ln n)}{\sqrt{2\ln n}}
=
\frac{2\ln n+1-\left(R_n(u)+R_n(v)\right)}{\sqrt{2\ln n}}
=
-\frac{\left(R_n(u)+R_n(v)\right)-2\ln n}{\sqrt{2\ln n}}
+\frac{1}{\sqrt{2\ln n}}.
\]
Since $1/\sqrt{2\ln n}\to 0$, it suffices to prove
\[
\frac{\left(R_n(u)+R_n(v)\right)-2\ln n}{\sqrt{2\ln n}}
\;\xrightarrow{d}\;
\mathcal{N}(0,1).
\]
Let $u, v\sim\Unif{S_n}$. Set
\[
X_n:=\frac{R_n(u)-\ln n}{\sqrt{\ln n}},
\qquad
Y_n:=\frac{R_n(v)-\ln n}{\sqrt{\ln n}}.
\]
Since $u$ and $v$ are independent, the random variables $R_n(u)$ and $R_n(v)$ are independent, and hence
$X_n$ and $Y_n$ are independent. Therefore, for all $(s,t)\in\mathbb{R}^2$, we have
\[
\varphi_{(X_n,Y_n)}(s,t)
:=\mathbb{E}\!\left[e^{i(sX_n+tY_n)}\right]
=\mathbb{E}\!\left[e^{isX_n}\right]\mathbb{E}\!\left[e^{itY_n}\right]
=:\varphi_{X_n}(s)\,\varphi_{Y_n}(t).
\]

By Proposition~\ref{prop:record_clt}, we have $X_n\xrightarrow{d}Z_1$ and $Y_n\xrightarrow{d}Z_2$, where
$Z_1,Z_2\sim\mathcal{N}(0,1)$. Hence, by the one-dimensional case of \cite[Theorem~7.6]{Billingsley1999Convergence},
\[
\varphi_{X_n}(s)\to \varphi_{Z_1}(s)=e^{-s^2/2},
\qquad
\varphi_{Y_n}(t)\to \varphi_{Z_2}(t)=e^{-t^2/2}.
\]
Combining with the factorization gives, for each $(s,t)\in\mathbb{R}^2$,
\[
\varphi_{(X_n,Y_n)}(s,t)=\varphi_{X_n}(s)\varphi_{Y_n}(t)\to e^{-s^2/2}\,e^{-t^2/2}
=\exp\!\left(-\frac{s^2+t^2}{2}\right).
\]
The limit is the characteristic function of an $\mathbb{R}^2$-valued normal vector with independent standard
normal coordinates (uniqueness of characteristic functions; see \cite[Theorem~7.5]{Billingsley1999Convergence}).
Applying the two-dimensional case of \cite[Theorem~7.6]{Billingsley1999Convergence}
yields
\[
(X_n,Y_n)\xrightarrow{d}(Z_1,Z_2),
\]
where $Z_1$ and $Z_2$ are independent $\mathcal{N}(0,1)$ random variables.
Therefore, since
\[
\left(\frac{R_n(u)-\ln n}{\sqrt{\ln n}},\ \frac{R_n(v)-\ln n}{\sqrt{\ln n}}\right)
\xrightarrow{d} (Z_1,Z_2),
\]
and the map $g:\mathbb{R}^2\to\mathbb{R}$ defined by $g(x,y)=(x+y)/\sqrt2$ is continuous, the continuous mapping theorem (see \cite[Theorem~2.3 (i)]{vanDerVaart1998AsymptoticStatistics})
implies
\[
\frac{1}{\sqrt2}\left(\frac{R_n(u)-\ln n}{\sqrt{\ln n}}+\frac{R_n(v)-\ln n}{\sqrt{\ln n}}\right)
=
g\!\left(\frac{R_n(u)-\ln n}{\sqrt{\ln n}},\ \frac{R_n(v)-\ln n}{\sqrt{\ln n}}\right)
\xrightarrow{d}
\frac{Z_1+Z_2}{\sqrt2}.
\]
Since $(Z_1,Z_2)$ is a bivariate normal vector with independent $\mathcal{N}(0,1)$ coordinates, the linear
combination $(Z_1+Z_2)/\sqrt2$ is $\mathcal{N}(0,1)$.
\end{proof}

We now show that the remainder term from the maximization is negligible on the $\sqrt{\ln n}$ scale.  By the definition of $E_n$ above, we have $\FS(u,v)=Y_2(u,v)+E_n(u, v)$.

\begin{lemma}\label{lemma:vanishingZn}
For $u,v\sim\Unif{S_n}$, the excess term $E_n(u, v)$ satisfies
\[
\frac{E_n(u, v)}{\sqrt{2\ln n}} \xrightarrow{p} 0
\qquad\text{as } n\to\infty.
\]
\end{lemma}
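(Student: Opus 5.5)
The plan is to bound $E_n(u,v)$ by a nonnegative random variable with uniformly bounded expectation, and then apply Markov's inequality.

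First I handle the maximization range. For $i=1$, we have $Y_1 - Y_2 = -\Delta_1 = -1 < 0$, so the index $i=1$ never realizes the maximum. For $i \ge 2$, the range $2 \le i \le m+1$ is contained in $2 \le i \le n+1$, since $m \le n$. Hence, pathwise,
\[
0 \le E_n(u,v) \le \max_{2\le k\le n+1}\bigl(Y_k(u,v)-Y_2(u,v)\bigr) = Z_n(u,v),
\]
where $Z_n$ is the quantity from the proof of the expectation proposition. Strictly, for $m < n$ the left-hand maximum runs over fewer indices, but it is still dominated by the maximum over the full range. This is the only structural point to check, and it is straightforward.

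Next, the pathwise bound in the proof of Lemma~\ref{lem:max_bound} with $i=2$ gives $Z_n(u,v) \le \sum_{j=2}^n I_j(u,v)$. Combined with Corollary~\ref{lem:prob_delta_i}, this yields
\[
\E{E_n(u,v)} \le \sum_{j=2}^n j^{-2} \le \frac{\pi^2}{6} - 1 .
\]
Markov's inequality then gives, for every $\varepsilon > 0$,
\[
\Prob{E_n(u,v) > \varepsilon\sqrt{2\ln n}} \le \frac{\pi^2/6 - 1}{\varepsilon\sqrt{2\ln n}} \to 0 .
\]
This is exactly convergence in probability to $0$.

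No step is a real obstacle: the bounded expectation of the excess has already been established, and the lemma follows from it by Markov's inequality. The only care needed is in the comparison of index ranges, $i \le m+1$ versus $i \le n+1$, and in excluding $i=1$, both of which are handled above. Combining this lemma with Lemma~\ref{lemma:distY2} and Slutsky's theorem then gives the CLT in the uniform main theorem.
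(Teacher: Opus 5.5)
Your proof is correct and is essentially the paper's argument. You dominate $E_n$ by $\max_{2\le i\le n+1}(Y_i-Y_2)$ using $m\le n$, bound its expectation by $\sum_{j=2}^{n} j^{-2}$ via Lemma~\ref{lem:max_bound}, and conclude with Markov's inequality; your explicit check that the $i=1$ term gives $Y_1-Y_2=-1$ is a small point of care that the paper leaves implicit.
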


\begin{proof}
Since $m\le n$, we have the pointwise bound
\[
E_n(u, v)
\le
\max_{2\le i\le n+1}\left(Y_i(u,v)-Y_2(u,v)\right).
\]

Applying Lemma~\ref{lem:max_bound} with $i=2$ yields
\[
\E{E_n(u, v)}
\le
\E*{\max_{2\le i\le n+1}\left(Y_i(u,v)-Y_2(u,v)\right)}
\le
\sum_{j=2}^{n}\frac{1}{j^2}
\le
\sum_{j=2}^{\infty}\frac{1}{j^2}.
\]
Let $\varepsilon>0$. By Markov's inequality,
\[
\Prob*{ \frac{E_n(u, v)}{\sqrt{2\ln n}} > \varepsilon }
=
\Prob{ E_n(u, v) > \varepsilon \sqrt{2\ln n} }
\le
\frac{\E{E_n(u, v)}}{\varepsilon \sqrt{2\ln n}}
\;\xrightarrow[n\to\infty]{}\;0,
\]
since $\E{E_n(u, v)}$ is bounded uniformly in $n$ and $\sqrt{\ln n}\to\infty$.
This proves $E_n(u, v)/\sqrt{2\ln n}\to 0$ in probability.
\end{proof}

We can now state the limiting distribution of the forward stability statistic for uniform permutations.

\theoremUniformMain*
\begin{proof}
By definition of $E_n$,
\[
\FS(u,v)=\max_{1\le i\le m+1}Y_i(u,v)=Y_2(u,v)+E_n,
\]
and hence
\[
\frac{\FS(u, v) - (2n - 2\ln n)}{\sqrt{2 \ln n}}
=
\frac{Y_2(u,v)-(2n-2\ln n)}{\sqrt{2\ln n}}
+
\frac{E_n}{\sqrt{2 \ln n}}.
\]
By Lemma~\ref{lemma:distY2}, the first term converges in distribution to $\mathcal{N}(0, 1)$, and by
Lemma~\ref{lemma:vanishingZn}, the second term converges in probability to $0$.
The result follows from Slutsky's theorem (see \cite[Lemma~2.8 (i)]{vanDerVaart1998AsymptoticStatistics}).
\end{proof}

This confirms that for uniform permutations, the forward stability number is concentrated around $2n-2\ln n$ with fluctuations of order $\sqrt{\ln n}$, and that these fluctuations are governed by the record structure of the underlying permutations.

\section{Grassmannian Permutations}\label{sec:grassmannian}

Recall that $w\in S_n$ is \emph{Grassmannian} if it has at most one descent. Equivalently, every non-identity Grassmannian permutation has a unique descent position $k\in\{1,\dots,n-1\}$ such that
\[
w(1)<w(2)<\cdots<w(k)
\qquad\text{and}\qquad
w(k+1)<w(k+2)<\cdots<w(n),
\]
and the descent occurs between positions $k$ and $k+1$.  Given $k\in\{0,1,\dots,n\}$ and a $k$-subset $V\subseteq[n]$, there is a unique Grassmannian permutation $w=w(k,V)$ whose first $k$ values are the elements of $V$ in increasing order and whose remaining values are the elements of $[n]\setminus V$ in increasing order. The identity permutation is the unique Grassmannian permutation with no descent. Let $\Grass_n$ denote the set of all such Grassmannian permutations in $S_n$, and let $\Unif{\Grass_n}$ denote the uniform probability distribution over this set.

\subsection{Probabilities of records of \texorpdfstring{$\Unif{\Grass_n}$}{Unif(Grass(n))}}

Let $w\sim \Unif{\Grass_n}$.  Let $k_w$ denote the descent position of $w$ (with $k_w:=0$ if $w=\mathrm{id}$), and let
$V_w\subseteq[n]$ be the set of its first $k_w$ values.  Set $M_w:=\max(V_w)$, with $M_w:=0$ if $k_w=0$.  Then
\[
\rec_j(w)=
\begin{cases}
1, & \text{if } j\le k_w \text{ or } j>M_w,\\
0, & \text{if } k_w<j\le M_w.
\end{cases}
\]

\begin{proposition}\label{prop:grass_rec_uniform_exact}
Fix $n\ge 1$ and $j\in[n]$, and let $w\sim \Unif{\Grass_n}$. Then
\[
\Prob{\rec_j(w)=1}
=
\frac{\displaystyle \sum_{k=j}^{n}\binom{n}{k}+2^{j-1}-n}{2^{n}-n}.
\]
\end{proposition}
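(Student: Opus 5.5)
Since the statement is a ratio, I will count the numerator and the denominator separately by the parametrization $w=w(k,V)$ already introduced.

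For the denominator, each $k\in\{0,\dots,n\}$ and each $k$-subset $V$ gives a Grassmannian permutation. The only coincidences are the pairs $(k,V)$ with $V=[k]$, i.e.\ with $M=k$: all $n+1$ of these give the identity. So $|\Grass_n|=2^n-(n+1)+1=2^n-n$. I will count the numerator the same way: sum over pairs $(k,V)$ with $V\neq[k]$ such that $\rec_j(w)=1$, then add $1$ for the identity, which has a record everywhere.

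For the numerator I use the displayed characterization: $\rec_j(w)=1$ iff $j\le k$ or $j>M$, where $M=\max V$. Restrict to nonidentity pairs, so $M>k$ (and $k\ge1$).

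\textbf{Case $j\le k$.} Every $k$-subset with $k\ge j$ counts, except $V=[k]$. This gives $\sum_{k=j}^n\binom nk-(n-j+1)$.

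\textbf{Case $k<j$ and $M<j$.} Then $V\subseteq[j-1]$ with $|V|=k$, $V\neq[k]$, and $k\le j-1$ is automatic. Summing over $k=0,\dots,j-1$ gives $2^{j-1}-j$, since each $k$ excludes exactly one subset, namely $[k]$. The case $k=0$ contributes nothing here, because its only subset is $\emptyset=[0]$. The two cases are disjoint.

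Adding the identity, the numerator is
\[
\sum_{k=j}^n\binom nk-(n-j+1)+2^{j-1}-j+1=\sum_{k=j}^n\binom nk+2^{j-1}-n,
\]
which matches the claimed formula.

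The only delicate step is the bookkeeping of the degenerate pairs $V=[k]$. These must be removed in both counts so that the identity is counted exactly once. I will sanity-check the result at $j=1$, where the probability should be $1$: the numerator becomes $2^n-1+1-n=2^n-n$, which equals the denominator.
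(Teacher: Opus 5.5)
Your proof is correct and is essentially the paper's argument: parametrize by $(k,V)$, count the identity once, count every non-identity prefix when $k\ge j$, and count $V\subseteq[j-1]$ with $V\neq[k]$ when $k<j$. The only cosmetic difference is that you let $k$ range over $\{j,\dots,n\}$ and $\{0,\dots,j-1\}$, whereas the paper uses $\{j,\dots,n-1\}$ and $\{1,\dots,j-1\}$; the extra values $k=n$ and $k=0$ contribute zero, so the two sums agree.
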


\begin{proof}
We count Grassmannian permutations with a record at position $j$.
First, $|\Grass_n|=2^n-n$, since for each $k\in\{1,\dots,n-1\}$ there are $\binom{n}{k}-1$ non-identity permutations with descent
position $k$, together with the identity permutation.

The identity contributes $1$ to the count.  If $k\ge j$, then $\rec_j(w)=1$ for every permutation with descent position $k$,
contributing $\binom{n}{k}-1$ for each $k=j,\dots,n-1$.  If $1\le k\le j-1$, then $\rec_j(w)=1$ is equivalent to $M_w<j$, i.e.\
$V_w\subseteq [j-1]$, and this gives $\binom{j-1}{k}-1$ possibilities for each such $k$ (excluding $V_w=[k]$, which would force $w=\mathrm{id}$).
Therefore,
\begin{align*}
\#\{w\in \Grass_n:\rec_j(w)=1\}
&=
1+\sum_{k=j}^{n-1}\left(\binom{n}{k}-1\right)+\sum_{k=1}^{j-1}\left(\binom{j-1}{k}-1\right)\\
&=
1+\left(\sum_{k=j}^{n}\binom{n}{k}-1\right)-(n-j)+\left(2^{j-1}-1\right)-(j-1)\\
&=\sum_{k=j}^{n}\binom{n}{k}+2^{j-1}-n.
\end{align*}
Dividing by $|\Grass_n|=2^n-n$ gives the claimed formula for $\Prob{\rec_j(w)=1}$.
\end{proof}

\subsection{The Modified Distribution \texorpdfstring{$\mathbb{G}_n$}{G\_n}}
For most of this section, we switch to a slightly modified distribution $\mathbb{G}_n$ with cleaner combinatorics than $\Unif{\Grass_n}$. All intermediate analysis below is carried out under $\mathbb{G}_n$, and we transfer the resulting asymptotics back to the uniform model $\Unif{\Grass_n}$ to prove Theorem~\ref{thm:GrassmannianMain} at the end of the section.

We define the \emph{Grassmannian distribution} $\mathbb{G}_n$ on $\Grass_n$ as follows: choose a subset
$V\subseteq[n]$ uniformly from the $2^n$ subsets,
set $k:=|V|$, and output the Grassmannian permutation $w=w(k,V)$ whose first $k$ values are the elements of $V$ in increasing order
and whose remaining values are the complement in increasing order. We refer to $V$ as the \textit{prefix}. When $w\neq \mathrm{id}$, the parameter $k$ is the descent position of $w$; when $w=\mathrm{id}$, the construction has $n+1$ possible values of $k$, and in that case $k$ is simply the sampled encoding parameter.

For $w\neq \mathrm{id}$ this representation is unique, so under $\mathbb{G}_n$ every non-identity Grassmannian permutation occurs with the same probability.
The identity permutation has multiple representations: for each $k\in\{0,1,\dots,n\}$ we have $\mathrm{id}=w\left(k,\{1,2,\dots,k\}\right)$. Consequently, if $w\sim\mathbb{G}_n$, then
\[
\Prob{w=w'}=
\begin{cases}
2^{-n}, & w'\neq \mathrm{id},\\[1mm]
(n+1)\,2^{-n}, & w'=\mathrm{id}.
\end{cases}
\]
Thus, $\mathbb{G}_n$ differs from $\Unif{\Grass_n}$ only by an extra weight on the identity permutation, which is exponentially small. We use the modified distribution $\mathbb{G}_n$ because it provides clean order-statistic formulas and gives the descent position a perfectly symmetric distribution, while the discrepancy from $\Unif{\Grass_n}$ remains confined to the identity event.

The next lemma records the distribution of the sampled descent parameter.
\begin{lemma}\label{lem:grass_binomial_descent}
Let $w \sim \mathbb{G}_n$ be a Grassmannian permutation generated by selecting a prefix subset $V \subseteq [n]$, and let $K := |V|$ be the sampled parameter in this construction. Then $K$ follows a binomial distribution, $K \sim \mathrm{Bin}(n, 1/2)$.
\end{lemma}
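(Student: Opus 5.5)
The plan is to compute the law of $K$ directly from the way $\mathbb{G}_n$ samples the prefix. Choosing $V\subseteq[n]$ uniformly from all $2^n$ subsets is the same as deciding, independently for each $i\in[n]$, whether $i\in V$, with each decision having probability $1/2$. Concretely, I would introduce the indicators $B_i:=\mathbf{1}\{i\in V\}$ for $i\in[n]$. The map $V\mapsto(B_1,\dots,B_n)$ is a bijection from subsets of $[n]$ to $\{0,1\}^n$. Since $V$ is uniform, the vector $(B_1,\dots,B_n)$ is uniform on $\{0,1\}^n$. Uniformity on the product set means the joint law factors as $2^{-n}=\prod_i \tfrac12$, so the $B_i$ are i.i.d.\ $\mathrm{Bernoulli}(1/2)$.

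From there I would write $K=|V|=\sum_{i=1}^n B_i$, which gives $K\sim\mathrm{Bin}(n,1/2)$ by the definition of the binomial law. As a cross-check I would also count directly: for each $k\in\{0,\dots,n\}$,
\[
\Prob{K=k}=\frac{\#\{V\subseteq[n]:|V|=k\}}{2^n}=\binom{n}{k}2^{-n},
\]
which is exactly the $\mathrm{Bin}(n,1/2)$ mass function.

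There is one subtlety I would address explicitly: $K$ is the sampled encoding parameter, not a function of the permutation $w$. When $w=\mathrm{id}$, the permutation does not determine $k$, because $\mathrm{id}=w(k,[k])$ for every $k$. The lemma is nonetheless a statement about $|V|$ under the sampling procedure, so the identity's multiple representations have no effect on the computation. I do not expect any step to be a real obstacle; the only care needed is to keep this distinction between $K$ and the descent position of $w$ clear.
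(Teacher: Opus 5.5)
Your argument is correct. The paper states this lemma without proof, treating it as immediate from the construction of $\mathbb{G}_n$, and your direct count $\Prob{K=k}=\binom{n}{k}2^{-n}$ (equivalently, your decomposition of $K$ into i.i.d.\ $\mathrm{Bernoulli}(1/2)$ indicators) is exactly the intended justification; your remark that $K$ is the sampled encoding parameter rather than a function of $w$ also matches the paper's convention for the identity permutation.
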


\subsection{Structural Decomposition}
The primary goal of this subsection is to establish the structural properties of Grassmannian permutations necessary to prove Proposition~\ref{prop:grass_fs_exact_prob}, which gives an exponential approximation to the forward stability number.

The first two lemmas concern the structure of a single $w\sim \mathbb{G}_n$. Let $M_w = \max(V_w)$ denote the largest value in the prefix, with the convention that $M_w=0$ when $k=0$ (so in particular $M_w=w(k)$ whenever $k\ge 1$). Note that it is always the case that $M_w\ge k$, and this inequality is strict unless $w$ is the identity permutation.
More precisely, Lemma~\ref{lem:grass_FS_equals_M}, Lemma~\ref{lem:grass_lambda} Lemma~\ref{lem:grass_path_shape} Corollary~\ref{cor:grass_fs_value}, Lemma~\ref{lem:grass_tail_bounds} are the ingredients used to prove Proposition\ref{prop:grass_fs_exact_prob}.

\begin{lemma}\label{lem:grass_FS_equals_M}
Let $w=w(k,V)\sim \mathbb{G}_n$. Then $w(j)=j$ for all $j>M_w$. Moreover, if $w\neq \mathrm{id}$ then $M_w\ge 1$ and $\FS(w)=M_w$, while if $w=\mathrm{id}$ then $\FS(w)=1$.
\end{lemma}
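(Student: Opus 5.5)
The plan is to read everything off the explicit form of $w=w(k,V)$: the first $k$ entries are the elements of $V$ in increasing order, and the remaining $n-k$ entries are the elements of $[n]\setminus V$ in increasing order. Throughout, I take $\FS(w)=\min\{m\ge 1: w\in S_m\}$ to mean the least $m\ge 1$ with $w(j)=j$ for all $j>m$.

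First I will show $w(j)=j$ for $j>M_w$. If $k=0$ then $V=\emptyset$ and $w=\mathrm{id}$, so this is trivial. Otherwise every $j>M_w$ lies in $[n]\setminus V$. The complement block lists $[n]\setminus V$ increasingly and occupies positions $k+1,\dots,n$, so it ends with the consecutive run $M_w+1,\dots,n$ in its last $n-M_w$ positions, i.e.\ positions $M_w+1,\dots,n$. Hence $w(j)=j$ there.

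Next, assume $w\neq\mathrm{id}$. Then $k\ge1$, so $V\neq\emptyset$ and $M_w\ge1$. The previous step gives $\FS(w)\le M_w$. For the reverse inequality I show $w(M_w)\neq M_w$. Since $w\ne\mathrm{id}$, we have $V\neq[k]$, and as $|V|=k$ this forces $M_w>k$ (as noted before the lemma). The value $M_w$ sits at position $k<M_w$, because it is the largest element of $V$ and so is placed last in the prefix block. Thus $w(k)=M_w\neq k$, which means position $k$ is not fixed. Moreover position $M_w$ carries a value different from $M_w$, since $M_w$ already appears at position $k$. So $w\notin S_{M_w-1}$, and therefore $\FS(w)=M_w$.

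Finally, if $w=\mathrm{id}$, then $\FS(w)=1$ by the convention $\FS(w)\ge1$. This case covers every representation $w(k,[k])$ and any value of $M_w=k$. The only delicate point is the bookkeeping of the identity's multiple encodings, and this is handled by treating that case separately.
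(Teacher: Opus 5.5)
Your proof is correct and follows the same approach as the paper. You read off the fixed tail $M_w+1,\dots,n$ from the sorted complement block, use $V\neq[k]\Rightarrow M_w>k$, and show position $M_w$ is not fixed. The one small difference is that you get $w(M_w)\neq M_w$ from injectivity, since the value $M_w$ already sits at position $k<M_w$, whereas the paper identifies $w(M_w)=c_{M_w-k}\le M_w-1$ explicitly.
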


\begin{proof}Write $V=\{v_1<\cdots<v_k\}$ and $[n]\setminus V=\{c_1<\cdots<c_{n-k}\}$.
By construction,
\[
w(i)=v_i\quad(1\le i\le k),
\qquad
w(k+t)=c_t\quad(1\le t\le n-k).
\]

Among the values $1,2,\dots,M_w$, exactly $k$ lie in $V$, so exactly $M_w-k$ lie in $[n]\setminus V$.
Equivalently,
\[
\{c_1,\dots,c_{M_w-k}\} = [M_w]\setminus V,
\qquad
c_{M_w-k+r}=M_w+r\ \ \text{for }1\le r\le n-M_w.
\]
Therefore, for each $1\le r\le n-M_w$ we have
\[
w(M_w+r)=w\left(k+(M_w-k)+r\right)=w\left(k+(M_w-k+r)\right)=c_{M_w-k+r}=M_w+r,
\]
so $w(j)=j$ for all $j>M_w$. For $j>n$ this also holds under the standard embedding of $S_n$ into $S_{\mathbb Z_+}$.

Now suppose $w\neq \mathrm{id}$. Then $V\neq\{1,\dots,k\}$, which forces $M_w>k$.
In particular, the position $M_w$ lies in the suffix (equivalently $M_w\ge k+1$), and
\[
w(M_w)=w\left(k+(M_w-k)\right)=c_{M_w-k}.
\]
Since $c_{M_w-k}\in [M_w]\setminus V$, we have $c_{M_w-k}\le M_w-1$, hence $w(M_w)\neq M_w$.
Combined with $w(j)=j$ for all $j>M_w$, this implies that the largest index moved by $w$ is exactly $M_w$,
so $\FS(w)=M_w$. Finally, if $w=\mathrm{id}$ then $w(j)=j$ for all $j\ge 1$, so $\FS(w)=1$ by definition.
\end{proof}

\begin{lemma}\label{lem:grass_lambda}
Let $w=w(k,V) \sim \mathbb{G}_n$. Then
\[
\Lambda_i(w) =
\begin{cases}
M_w - k & \text{if } 1 \le i \le k, \\
M_w - i + 1 & \text{if } k < i \le M_w, \\
0 & \text{if } i > M_w.
\end{cases}
\]
\end{lemma}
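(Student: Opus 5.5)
The approach is to compute $\Lambda_i(w)=\sum_{j\ge i}\chi_j(w)$ directly from the description of record positions recorded before Proposition~\ref{prop:grass_rec_uniform_exact}. That description says $\rec_j(w)=1$ exactly when $j\le k$ or $j>M_w$, and $\rec_j(w)=0$ exactly when $k<j\le M_w$. I would first re-verify this characterization for $w=w(k,V)$:
\begin{itemize}
\item For $j\le k$, the prefix $w(1)<\cdots<w(k)$ is increasing, so every such $j$ is a record.
\item For $k<j\le M_w$, by the proof of Lemma~\ref{lem:grass_FS_equals_M} we have $w(j)=c_{j-k}$ with $j-k\le M_w-k$, so $w(j)\in[M_w]\setminus V$. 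Hence $w(j)<M_w=w(k)$, and $w(k)$ occurs at an earlier position, so $d_j(w)>0$.
\item For $j>M_w$, Lemma~\ref{lem:grass_FS_equals_M} gives $w(j)=j$. The first $j-1$ positions then carry exactly the values $[j-1]$, all smaller than $j$, so $j$ is a record.
\end{itemize}
The convention $M_w=0$ when $k=0$ is consistent here, and for the identity with $k\ge1$ we have $M_w=k$, so the middle range is empty. In both cases every position is a record.

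It follows that $\chi_j(w)=\mathbf 1\{k<j\le M_w\}$, so $\Lambda_i(w)=\left|\{j\ge i: k<j\le M_w\}\right|=\left|[\max(i,k+1),\,M_w]\right|$. The three cases of the statement are now immediate:
\begin{itemize}
\item If $i\le k$, the interval is $[k+1,M_w]$, of size $M_w-k$; this is $0$ when $w=\mathrm{id}$, since then $M_w=k$.
\item If $k<i\le M_w$, the interval is $[i,M_w]$, of size $M_w-i+1$.
\item If $i>M_w$, the interval is empty, so $\Lambda_i(w)=0$.
\end{itemize}

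The only step needing care is the middle-range claim that $w(j)<M_w$ for $k<j\le M_w$. This relies on the counting fact from Lemma~\ref{lem:grass_FS_equals_M} that exactly $M_w-k$ complement values lie below $M_w$, so the first $M_w-k$ suffix entries are exactly those values. I would cite that lemma's proof for this fact. The degenerate identity encodings are harmless because the middle range $(k,M_w]$ is empty for them.
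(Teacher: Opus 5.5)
Your proof is correct and follows essentially the same route as the paper: both identify the non-record positions as exactly $k+1,\dots,M_w$, using the fact that the first $M_w-k$ suffix entries are precisely the complement values below $M_w$, and then count the non-records at positions $j\ge i$. The only cosmetic difference is that the paper treats the identity encodings $V=[k]$ separately at the start, whereas you handle them by noting that the middle range $(k,M_w]$ is empty.
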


\begin{proof}
If $V=\{1,\dots,k\}$ (equivalently, $w$ is the identity permutation), then every position is a record and $\Lambda_i(w)=0$ for all $i$, which agrees with the stated formula since $M_w=k$. Thus, we may assume $V\neq\{1,\dots,k\}$, so $k\ge 1$ and $w(k)=M_w>k$. By definition, $\Lambda_i(w)$ counts the number of indices $j$ with $i\le j\le \FS(w)$ such that $w(j)$ is a non-record.
Since the prefix $w(1), \dots, w(k)$ is increasing, all elements at indices $j \le k$ are records. Thus, non-records occur exclusively in the suffix.
For $j > k$, $w(j)$ is a non-record if and only if $w(j) < \max(w(1), \dots, w(j-1))$. Since $w(k)=M_w$ is the maximum of the prefix and the suffix is increasing, we have $\max(w(1),\dots,w(j-1))=M_w$ for $k<j\le M_w$, while for $j>M_w$ we have $w(j)=j>M_w$ by Lemma~\ref{lem:grass_FS_equals_M} and hence $w(j)$ is a record. Therefore $w(j)$ is a non-record if and only if $w(j)<M_w$.

The set of values in the suffix is $\{1, \dots, n\} \setminus V$, appearing in increasing order. The non-records are precisely the values in this set strictly less than $M_w$. Since $V$ contains $M_w$ and exactly $k-1$ values smaller than $M_w$, the suffix contains exactly $M_w - k$ values smaller than $M_w$. Because the suffix is sorted, these $M_w - k$ values form a contiguous block at the start of the suffix, occupying indices $k+1, \dots, M_w$.

For $i \le k$, the sum $\Lambda_i$ includes all non-records, so $\Lambda_i = M_w - k$.
For $k < i \le M_w$, the function decreases by 1 at each step as the index $i$ passes the positions of the non-records. For $i > M_w$, no non-records remain.
\end{proof}

Next, we turn to the forward stability number. Let $u =u(k_u,V_u), v =v(k_v,V_v)\in \Grass_n$. Define $k_{\min} := \min(k_u, k_v)$ and $k_{\max} := \max(k_u, k_v)$.
Recall that $Y_i(u, v) = \Lambda_i(u) + \Lambda_i(v) + i - 1$.

  \begin{lemma}\label{lem:grass_path_shape}
  If $u, v\neq \mathrm{id}$ and $k_{\max} < \min(M_u, M_v)$, then the sequence $(Y_i(u, v))_{1\le i\le m+1}$ behaves as follows:
  \begin{enumerate}
      \item $Y_i(u, v) < Y_{i+1}(u, v)$ for $1 \le i \le k_{\min}$;
      \item $Y_i(u, v) = Y_{i+1}(u, v)$ for $k_{\min}+1 \le i \le k_{\max}$;
      \item $Y_i(u, v) > Y_{i+1}(u, v)$ for $k_{\max}+1 \le i \le \min(M_u, M_v)$;
      \item $Y_i(u, v) = Y_{i+1}(u, v)$ for $\min(M_u, M_v)+1 \le i \le m$.
  \end{enumerate}
  \end{lemma}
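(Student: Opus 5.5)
The plan is to compute the increments $\Delta_i(u,v)=Y_{i+1}(u,v)-Y_i(u,v)=1-\chi_i(u)-\chi_i(v)$ from \eqref{eq:delta_i} directly. The needed values of $\chi_i$ come from the explicit record description of a Grassmannian permutation, or equivalently from Lemma~\ref{lem:grass_lambda} via $\chi_i(w)=\Lambda_i(w)-\Lambda_{i+1}(w)$. For $w=w(k,V)\neq\mathrm{id}$, Lemma~\ref{lem:grass_lambda} shows that $\Lambda_i(w)$ is constant for $1\le i\le k$ and for $i>M_w$. It decreases by exactly one at each step $k<i\le M_w$, where $\Lambda_{M_w+1}=0$ and $\Lambda_{M_w}=1$. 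Hence
\[
\chi_i(w)=\mathbf{1}\{k_w<i\le M_w\}.
\]

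Next we substitute this into \eqref{eq:delta_i} and treat the four ranges of $i$ in turn, using the hypothesis $k_{\max}<\min(M_u,M_v)$.
\begin{enumerate}
\item For $1\le i\le k_{\min}$, both indicators vanish since $i\le k_u$ and $i\le k_v$. So $\Delta_i=1$ and $Y$ strictly increases.
\item For $k_{\min}<i\le k_{\max}$, exactly one indicator is $1$. The permutation attaining $k_{\min}$ has $k_{\min}<i\le k_{\max}<$ its $M$, so its indicator is $1$, while the other has $i\le$ its $k$. Thus $\Delta_i=0$.
\item For $k_{\max}<i\le\min(M_u,M_v)$, both indicators equal $1$, so $\Delta_i=-1$.
\item For $\min(M_u,M_v)<i\le m=\max(M_u,M_v)$, the permutation with the smaller $M$ has $\chi_i=0$. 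The other permutation has $k\le k_{\max}<\min M<i\le$ its $M$, so its $\chi_i=1$. Thus $\Delta_i=0$.
\end{enumerate}
Here $m=\max(\FS(u),\FS(v))=\max(M_u,M_v)$ by Lemma~\ref{lem:grass_FS_equals_M}, since $u,v\neq\mathrm{id}$, so the index range in the statement matches.

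The main obstacle is the derivation of $\chi_i(w)$ at the boundaries $i=k$, $i=M_w$ and $i=m$, where off-by-one errors are easy. In particular, Lemma~\ref{lem:grass_lambda} must be checked to give $\Lambda_{M_w}=1$ and $\Lambda_{M_w+1}=0$, and the last increment index $i=m$ must be confirmed to fall in range (4). All of these are immediate from the piecewise formula. The hypothesis $k_{\max}<\min(M_u,M_v)$ is used exactly to guarantee that the intervals $(k_u,M_u]$ and $(k_v,M_v]$ are ordered so that the four ranges are consecutive and nonoverlapping.
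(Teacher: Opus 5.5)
Your proposal is correct and follows the same route as the paper: read off the increments $\Lambda_{i+1}(w)-\Lambda_i(w)=-\chi_i(w)$ from Lemma~\ref{lem:grass_lambda} and run the same four-range case analysis, using $k_{\max}<\min(M_u,M_v)$ to decide which side of each interval $(k_w,M_w]$ the index $i$ lies on. Your boundary formula $\chi_i(w)=\mathbf{1}\{k_w<i\le M_w\}$ is the correct one; the paper's displayed increment formula already puts a $-1$ at $i=k_w$, an off-by-one slip, although its case analysis (like yours) actually uses the strict inequality $k_w<i$.
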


  \begin{proof}
  Given $\Delta_i(u, v) := Y_{i+1}(u, v) - Y_i(u, v)$, we have $\Delta_i(u, v) = \big(\Lambda_{i+1}(u)-\Lambda_i(u)\big) + \big(\Lambda_{i+1}(v)-\Lambda_i(v)\big) + 1$.
  Let $w = w(k_w,V_w)\sim \mathbb{G}_n$. By the explicit formula for $\Lambda_i(w)$ given in Lemma~\ref{lem:grass_lambda}, the increments $\Lambda_{i+1}(w)-\Lambda_i(w)$ are given by
  \[
  \Lambda_{i+1}(w)-\Lambda_i(w)=
  \begin{cases}
  0,& i< k_w,\\
  -1,& k_w\leq i\le M_w,\\
  0,& i> M_w.
  \end{cases}
  \]
For $1 \le i \le k_{\min}$, we have $i \le k_u$ and $i \le k_v$. Thus, both $\Lambda$ increments are $0$, giving $\Delta_i(u,v) = 0 + 0 + 1 = 1 > 0$. This implies $Y_i(u, v) < Y_{i+1}(u, v)$.

For $k_{\min} < i \le k_{\max}$, assume without loss of generality that $k_u = k_{\min}$ and $k_v = k_{\max}$. Then $k_u < i \le M_u$ since $i \le k_{\max} < M_u$, yielding a $\Lambda$ increment of $-1$ for $u$. For $v$, we have $i \le k_v$, yielding a $\Lambda$ increment of $0$. Thus, $\Delta_i(u,v) = -1 + 0 + 1 = 0$, implying $Y_i(u, v) = Y_{i+1}(u, v)$.

For $k_{\max} < i \le \min(M_u, M_v)$, we have $i > k_u$ and $i > k_v$, while simultaneously $i \le M_u$ and $i \le M_v$. Thus, both $\Lambda$ increments are $-1$, giving $\Delta_i(u,v) = -1 + (-1) + 1 = -1 < 0$. This implies $Y_i(u, v) > Y_{i+1}(u, v)$.

For $\min(M_u, M_v) < i \le \max(M_u, M_v)$, assume without loss of generality that $M_u \le M_v$, so $\max(M_u, M_v) = M_v$. Then $i > M_u$, yielding a $\Lambda$ increment of $0$ for $u$. For $v$, we have $k_v \le k_{\max} < \min(M_u, M_v) < i \le M_v$, yielding a $\Lambda$ increment of $-1$. Thus, $\Delta_i(u,v) = 0 + (-1) + 1 = 0$, implying $Y_i(u, v) = Y_{i+1}(u, v)$.
  \end{proof}

\begin{corollary}\label{cor:grass_fs_value}
Under the same hypotheses as Lemma~\ref{lem:grass_path_shape}, we have
\[
\FS(u,v) = M_u + M_v - k_{\max}.
\]
\end{corollary}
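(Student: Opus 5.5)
By Lemma~\ref{lem:grass_path_shape}, under the stated hypotheses the walk $(Y_i(u,v))_{1\le i\le m+1}$ is unimodal. It strictly increases for $i\le k_{\min}$ and is flat on $k_{\min}+1\le i\le k_{\max}+1$. After that it strictly decreases, and it ends with a flat stretch. The maximum in \eqref{eq:mainFSEq} is therefore attained at $i=k_{\max}+1$, and every index in $\{k_{\min}+1,\dots,k_{\max}+1\}$ gives the same value. The plan is to evaluate $Y_i$ at a convenient maximizer using the closed form from Lemma~\ref{lem:grass_lambda}.

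First, record the range of indices. Since $u,v\neq\mathrm{id}$, Lemma~\ref{lem:grass_FS_equals_M} gives $\FS(u)=M_u$ and $\FS(v)=M_v$, so $m=\max(M_u,M_v)$ and $\FS(u,v)=\max_{1\le i\le m+1}Y_i(u,v)$. The hypothesis $k_{\max}<\min(M_u,M_v)$ guarantees $k_{\max}+1\le \min(M_u,M_v)\le m$, so the index $i=k_{\max}+1$ lies in the admissible range. Chaining the four monotonicity statements of the lemma then shows $Y_i\le Y_{k_{\max}+1}$ for all $1\le i\le m+1$. For $i\le k_{\max}+1$ this holds because the increments there are $+1$ or $0$. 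For $i>k_{\max}+1$ it holds because the increments are $-1$ or $0$; this includes the final step to $i=m+1$, since part (4) covers increments up to $i=m$.

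Next, compute $Y_{k_{\max}+1}(u,v)$. Without loss of generality $k_u\le k_v=k_{\max}$, and set $i=k_v+1$. We have $k_u<i\le M_u$ because $k_v<M_u$, so Lemma~\ref{lem:grass_lambda} gives $\Lambda_i(u)=M_u-i+1=M_u-k_v$. Likewise $k_v<i\le M_v$, so $\Lambda_i(v)=M_v-k_v$. Therefore
\[
Y_{k_{\max}+1}(u,v)=(M_u-k_{\max})+(M_v-k_{\max})+k_{\max}=M_u+M_v-k_{\max}.
\]

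The argument is short, and I expect no real obstacle. The one point needing care is the boundary bookkeeping: checking that $i=k_{\max}+1$ is at most $m+1$, and that the increment ranges in Lemma~\ref{lem:grass_path_shape} together cover every step from $1$ to $m$. Both follow from the strict inequality $k_{\max}<\min(M_u,M_v)$. As a sanity check, the case $k_u=k_v$, where the flat segment in part (2) is empty, is handled by the same computation.
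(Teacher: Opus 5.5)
Your proof is correct and follows the paper's argument: use Lemma~\ref{lem:grass_path_shape} to place the maximum on the plateau ending at $i=k_{\max}+1$, then evaluate $Y_{k_{\max}+1}(u,v)$ with Lemma~\ref{lem:grass_lambda}. Your added bookkeeping is a detail the paper leaves implicit: you note that $m=\max(M_u,M_v)$, that $k_{\max}+1\le m+1$, and that the four increment ranges cover every step up to $m$.
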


\begin{proof}
By definition, $\FS(u,v) = \max_{1 \le i \le m+1} Y_i(u, v)$. By Lemma~\ref{lem:grass_path_shape}, the sequence $Y_i(u, v)$ achieves its global maximum anywhere on the plateau $[k_{\min}+1, k_{\max}+1]$. Evaluating $Y_i(u, v)$ at the right endpoint $i = k_{\max}+1$ yields
\[
\FS(u,v) = Y_{k_{\max}+1}(u,v) = (M_u - k_{\max}) + (M_v - k_{\max}) + k_{\max} = M_u + M_v - k_{\max},
\]
by Lemma~\ref{lem:grass_lambda}.
\end{proof}

\begin{lemma}\label{lem:grass_tail_bounds}
  Let $w = w(k_w,V_w)\sim \mathbb{G}_n$. There exists a constant $c > 0$ such that for all sufficiently large $n$, $\Prob{k_w \ge 3n/4} \le e^{-cn}$ and $\Prob{M_w \le 3n/4} \le e^{-cn}$.
  \end{lemma}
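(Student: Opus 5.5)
Both bounds follow from Chernoff-type tail estimates for the binomial distribution, since under $\mathbb{G}_n$ the prefix $V$ is a uniformly random subset of $[n]$. Equivalently, the indicators $\mathbf{1}\{x\in V\}$ for $x\in[n]$ are i.i.d.\ $\mathrm{Bernoulli}(1/2)$.

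\emph{First bound.} By Lemma~\ref{lem:grass_binomial_descent}, $k_w=|V|\sim\mathrm{Bin}(n,1/2)$, which has mean $n/2$. The event $k_w\ge 3n/4$ is a deviation of $n/4$ above the mean. Hoeffding's inequality for a sum of $n$ independent $[0,1]$-valued variables gives
\[
\Prob{k_w-\tfrac n2\ge \tfrac n4}\le \exp\!\left(-2n\cdot\left(\tfrac14\right)^2\right)=e^{-n/8}.
\]
Alternatively, one can bound $2^{-n}\sum_{k\ge 3n/4}\binom nk\le 2^{-n}2^{nH(3/4)}$ using the binary entropy bound, with $H(3/4)<1$.

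\emph{Second bound.} The event $M_w\le 3n/4$ says that $V$ contains no element of the interval $(3n/4,n]$. This interval contains at least $\lfloor n/4\rfloor$ integers, and independence gives
\[
\Prob{M_w\le 3n/4}=2^{-(n-\lfloor 3n/4\rfloor)}\le 2^{-n/4}.
\]
This also covers the convention $M_w=0$ when $V=\emptyset$, since that case is included in the event.

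Taking $c=\min(1/8,(\ln 2)/4)=1/8$ gives both inequalities for all $n\ge 1$, so in particular for all sufficiently large $n$. There is no real obstacle. The only care needed is with floors when $3n/4$ is not an integer, and the exact count $n-\lfloor 3n/4\rfloor\ge n/4$ handles this.
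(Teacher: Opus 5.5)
Your proof is correct and takes the same approach as the paper: a Chernoff/Hoeffding tail bound for $k_w\sim\mathrm{Bin}(n,1/2)$, together with the exact computation $\Prob{M_w\le 3n/4}=2^{\lfloor 3n/4\rfloor-n}\le 2^{-n/4}$. The only difference is that you make the constant explicit ($c=1/8$, valid for all $n\ge 1$), whereas the paper leaves the Chernoff constant $c_1$ unspecified.
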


  \begin{proof}
  By Lemma~\ref{lem:grass_binomial_descent}, the sampled parameter satisfies $k_w \sim \mathrm{Bin}(n, 1/2)$. The standard Chernoff bound for a binomial random variable (see, e.g., \cite[Ch.~2]{DubhashiPanconesi2009} or \cite[\S 1.1]{BoucheronLugosiMassart2013}) yields $\Prob{k_w \ge 3n/4} \le e^{-c_1 n}$ for some $c_1 > 0$.

  For $M_w$, recall that $V_w \subseteq [n]$ is chosen uniformly at random and $M_w = \max(V_w)$. For any real $x$, the event $\{M_w \le x\}$ is equivalent to $\{V_w \subseteq [\lfloor x \rfloor]\}$. Therefore,
  \[
  \Prob{M_w \le 3n/4} \le 2^{\lfloor 3n/4 \rfloor - n} \le 2^{-n/4} = e^{-(n \ln 2)/4}.
  \]
  Taking $c = \min(c_1, (\ln 2)/4)$, we obtain the claim.
  \end{proof}

    \begin{proposition}\label{prop:grass_fs_exact_prob}
    Let $u, v \sim \mathbb{G}_n$, and let
    \[
    \Xi \;:=\; \FS(u, v) - \left(M_u + M_v - k_{\max}\right).
    \]
    Then $\Prob{\Xi \neq 0} = O(e^{-cn})$ for some constant $c > 0$. Consequently, $\E{|\Xi|}$ is exponentially small.
    \end{proposition}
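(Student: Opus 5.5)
The plan is to show that the event $\{\Xi\neq 0\}$ is contained in a union of a few events, each of exponentially small probability. Then I will bound $|\Xi|$ deterministically by $O(n)$.

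\textbf{Good event.} Let $G$ be the event that $u\neq\mathrm{id}$, $v\neq\mathrm{id}$, $k_u<3n/4$, $k_v<3n/4$, $M_u>3n/4$ and $M_v>3n/4$. On $G$ we have
\[
k_{\max}<3n/4<\min(M_u,M_v),
\]
so the hypotheses of Lemma~\ref{lem:grass_path_shape} hold. Corollary~\ref{cor:grass_fs_value} then gives $\FS(u,v)=M_u+M_v-k_{\max}$, that is, $\Xi=0$. Hence $\{\Xi\neq0\}\subseteq G^c$.

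\textbf{Bounding $\Prob{G^c}$.} By a union bound,
\[
\Prob{G^c}\le 2\Prob{w=\mathrm{id}}+2\Prob{k_w\ge 3n/4}+2\Prob{M_w\le 3n/4},
\]
where $w\sim\mathbb{G}_n$ and we use independence of $u$ and $v$ only to say each factor has the law of $w$.
\begin{itemize}
\item Under $\mathbb{G}_n$, $\Prob{w=\mathrm{id}}=(n+1)2^{-n}$, which is exponentially small.
\item The remaining two terms are at most $e^{-cn}$ by Lemma~\ref{lem:grass_tail_bounds}.
\end{itemize}
Adjusting the constant to absorb the factor $n+1$ gives $\Prob{\Xi\neq0}=O(e^{-c'n})$.

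\textbf{The expectation.} For $\E{|\Xi|}$, I need a deterministic bound $|\Xi|\le Cn$.
\begin{itemize}
\item Since $\Lambda_i(w)\le n$ and $i\le n+1$, we get $0\le \FS(u,v)\le 3n+1$.
\item Since $0\le M_u,M_v\le n$ and $0\le k_{\max}\le n$, we get $|M_u+M_v-k_{\max}|\le 2n$.
\end{itemize}
So $|\Xi|\le 5n+1$, and therefore
\[
\E{|\Xi|}\le (5n+1)\Prob{\Xi\neq0}=O(ne^{-c'n}),
\]
which is still exponentially small.

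\textbf{Main obstacle.} The only delicate point is confirming that the hypotheses of Lemma~\ref{lem:grass_path_shape} really follow from the tail events. The strict inequality $k_{\max}<\min(M_u,M_v)$ comes from the strict inequalities $k<3n/4<M$. I also need to treat the identity case, including the fact that under $\mathbb{G}_n$ the identity can arise with any $k$, separately via its $(n+1)2^{-n}$ mass. Beyond that, the argument is a routine union bound.
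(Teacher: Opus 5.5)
Your proposal is correct and follows essentially the same route as the paper. Both arguments exclude the identity via its $(n+1)2^{-n}$ mass, reduce $\{\Xi\neq 0\}$ to the four tail events of Lemma~\ref{lem:grass_tail_bounds} (the paper via $\{k_{\max}<\min(M_u,M_v)\}$, you via the intersection of the tail complements), and then multiply by a deterministic linear bound on $|\Xi|$; your $|\Xi|\le 5n+1$ is cruder than the paper's $2n$ (from $\FS(u,v)\le 2\max(\FS(u),\FS(v))$), but it is valid and suffices.
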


    \begin{proof}
    Let $\mathcal{I} := \{u = \mathrm{id}\} \cup \{v = \mathrm{id}\}$. As noted previously, $\Prob{w = \mathrm{id}} = (n+1)2^{-n}$. By the union bound,
    \[
    \Prob{\mathcal{I}} \le 2(n+1)2^{-n} = O(e^{-cn}).
    \]
    Define the event $\mathcal{E} := \{k_{\max} < \min(M_u, M_v)\}$. On the intersection $\mathcal{E} \cap \mathcal{I}^c$, Corollary~\ref{cor:grass_fs_value} applies directly, guaranteeing that $\Xi = 0$. By De Morgan's laws, we have $(\mathcal{E} \cap \mathcal{I}^c)^c = \mathcal{E}^c \cup \mathcal{I}$.
    Then, by the union bound, we obtain
    \[
    \mathbb{P}(\Xi \neq 0) \le \mathbb{P}(\mathcal{E}^c \cup \mathcal{I}) \le \mathbb{P}(\mathcal{E}^c) + \mathbb{P}(\mathcal{I}).
    \]
    To bound $\Prob{\mathcal{E}^c}$, note that if $\mathcal{E}^c$ occurs, then $k_{\max} \ge \min(M_u, M_v)$. This forces at least one of the following four events to occur: $\{k_u \ge 3n/4\}$, $\{k_v \ge 3n/4\}$, $\{M_u \le 3n/4\}$, or $\{M_v \le 3n/4\}$. Applying the union bound alongside Lemma~\ref{lem:grass_tail_bounds} yields
    \[
    \Prob{\mathcal{E}^c} \le 2\Prob{k_w \ge 3n/4} + 2\Prob{M_w \le 3n/4} = O(e^{-cn}).
    \]
    Combining these bounds proves $\Prob{\Xi \neq 0} = O(e^{-cn})$.

    Given that $\FS(u,v) \le 2\max(\FS(u), \FS(v)) \le 2n$. Since $M_u + M_v - k_{\max} \in [0, 2n]$ as well, we have $|\Xi| \le 2n$ deterministically.
    By conditioning on whether $\Xi$ is zero or non-zero, the law of total expectation gives $\E{|\Xi|} = \E{|\Xi| \mid \Xi \neq 0} \Prob{\Xi \neq 0} + \E{|\Xi| \mid \Xi = 0} \Prob{\Xi = 0}$.
    The second term obviously vanishes, since $|\Xi| = 0$ perfectly when $\Xi = 0$. For the first term, since $|\Xi| \le 2n$, $\E{|\Xi| \mid \Xi \neq 0}\leq 2n$.
    Therefore, we have
    \[
    \E{|\Xi|} \le 2n \Prob{\Xi \neq 0} = 2n \, O(e^{-cn}) = O(e^{-c'n})
    \]
    for some $c' > 0$.
    \end{proof}

\subsection{Asymptotic Expectation}

\begin{lemma}\label{lem:expected_max}
Let $w \sim \mathbb{G}_n$. Then $\E{M_w} = n - 1 + 2^{-n}$.
\end{lemma}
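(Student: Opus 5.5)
Under $\mathbb{G}_n$ the prefix $V_w$ is a uniformly random subset of $[n]$ with $2^n$ equally likely outcomes, and $M_w=\max(V_w)$ with the convention $M_w=0$ when $V_w=\emptyset$. The plan is to compute $\E{M_w}$ by the tail-sum formula
\[
\E{M_w}=\sum_{t=1}^{n}\Prob{M_w\ge t}=\sum_{t=0}^{n-1}\Prob{M_w>t},
\]
since $M_w$ takes values in $\{0,1,\dots,n\}$.

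The key step is the identity already used in the proof of Lemma~\ref{lem:grass_tail_bounds}. For an integer $t$ with $0\le t\le n$, the event $\{M_w\le t\}$ coincides with $\{V_w\subseteq[t]\}$. This includes the case $t=0$, where the event is $\{V_w=\emptyset\}$. Hence $\Prob{M_w\le t}=2^{t}/2^{n}=2^{t-n}$, and so $\Prob{M_w>t}=1-2^{t-n}$.

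Substituting into the tail sum gives
\[
\E{M_w}=\sum_{t=0}^{n-1}\left(1-2^{t-n}\right)=n-2^{-n}\sum_{t=0}^{n-1}2^{t}=n-2^{-n}(2^n-1)=n-1+2^{-n}.
\]

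There is no real obstacle here. The only points to check are the boundary conventions: $M_w=0$ corresponds exactly to $V_w=\emptyset$, which is consistent with the identity permutation having multiple encodings under $\mathbb{G}_n$; and the tail sum starts at $t=0$, because $M_w\ge 0$. As a sanity check, $n=1$ gives $\E{M_w}=\tfrac12$, matching $M_w\in\{0,1\}$ with equal probability.
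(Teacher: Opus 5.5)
Your proof is correct and follows the paper's argument: both apply the tail-sum formula with $\Prob{M_w\le t}=2^{t-n}$, which comes from $\{M_w\le t\}=\{V_w\subseteq[t]\}$, and then sum the resulting geometric series. The paper indexes the sum as $\sum_{m=1}^{n}\bigl(1-2^{m-1-n}\bigr)$, which is your sum after the substitution $t=m-1$.
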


\begin{proof}
Recall that $V_w\subseteq[n]$ is a uniformly random subset and $M_w=\max(V_w)$, with the convention $M_w=0$ when $V_w=\varnothing$. For $m\in\{0,1,\dots,n\}$, the event $\{M_w\le m\}$ is equivalent to $\{V_w\subseteq[m]\}$, and hence
\[
\Prob{M_w\le m}=2^{m-n}.
\]
Therefore, for a nonnegative integer-valued random variable,
\[
\E{M_w}=\sum_{m=1}^n \Prob{M_w\ge m}
=\sum_{m=1}^n \left(1-\Prob{M_w\le m-1}\right)
=\sum_{m=1}^n \left(1-2^{m-1-n}\right)
=n-1+2^{-n}. \qedhere
\]
\end{proof}

\begin{lemma}\label{lem:expected_max_descent}
Let $k_u, k_v$ be the descent positions of independent, uniformly sampled Grassmannian permutations $u, v \sim \mathbb{G}_n$. Then
\[
\E{\max(k_u, k_v)} = \frac{n}{2} + \frac{1}{2\sqrt{\pi}}\sqrt{n} + O(n^{-1/2}).
\]
\end{lemma}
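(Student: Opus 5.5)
We use the identity $\max(a,b)=\tfrac12(a+b)+\tfrac12|a-b|$. By Lemma~\ref{lem:grass_binomial_descent}, $k_u,k_v$ are independent $\mathrm{Bin}(n,1/2)$, so $\E{k_u}=\E{k_v}=n/2$ and
\[
\E{\max(k_u,k_v)}=\frac n2+\frac12\E{|k_u-k_v|}.
\]
It therefore suffices to show $\E{|k_u-k_v|}=\sqrt{n/\pi}+O(n^{-1/2})$.

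First we get an exact law for $D:=k_u-k_v$. Write $k_v=n-k_v'$, where $k_v'=n-k_v\sim\mathrm{Bin}(n,1/2)$ is independent of $k_u$. Then $D=k_u+k_v'-n$ with $k_u+k_v'\sim\mathrm{Bin}(2n,1/2)$, so
\[
\Prob{D=d}=\binom{2n}{n+d}2^{-2n}.
\]

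Next we compute $\E{|D|}=2\sum_{d\ge1}d\binom{2n}{n+d}2^{-2n}$ in closed form. The standard identity (absolute first moment of a symmetric binomial) is
\[
\sum_{d\ge 1} d\binom{2n}{n+d}=\frac{n}{2}\binom{2n}{n}.
\]
To verify it, telescope using $d\binom{2n}{n+d}=\frac{n}{2}\left[\binom{2n}{n+d-1}-\binom{2n}{n+d+1}\right]$ (up to the exact constant, which one checks from $(n+d)\binom{2n}{n+d}=2n\binom{2n-1}{n+d-1}$ and Pascal's rule). Either way the result is $\E{|D|}=n\binom{2n}{n}4^{-n}$.

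Finally, apply Stirling's formula in the form $\binom{2n}{n}4^{-n}=\frac{1}{\sqrt{\pi n}}\left(1-\frac{1}{8n}+O(n^{-2})\right)$. This gives $\E{|D|}=\sqrt{n/\pi}+O(n^{-1/2})$, and substituting back yields
\[
\E{\max(k_u,k_v)}=\frac n2+\frac{1}{2\sqrt\pi}\sqrt n+O(n^{-1/2}).
\]

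The main obstacle is getting the exact constant in the telescoping identity right. If it resists, a fallback is the generating-function evaluation $\E{|D|}=\sum_{j}|j-n|\binom{2n}{j}4^{-n}$, using the known mean absolute deviation of $\mathrm{Bin}(2n,1/2)$, namely $2n\cdot\binom{2n-1}{n}\cdot 2^{-2n}\cdot 2\cdot\tfrac12=n\binom{2n}{n}4^{-n}$ (de Moivre's formula).
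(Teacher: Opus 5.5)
Your proposal is correct and follows the paper's proof essentially step for step: the identity $\max(a,b)=\tfrac12(a+b)+\tfrac12|a-b|$, the reflection $k_v\mapsto n-k_v$ giving $S\sim\mathrm{Bin}(2n,1/2)$, de Moivre's formula $\E{|S-n|}=n\binom{2n}{n}4^{-n}$, and Stirling. One small correction: the telescoping identity you display is wrong as written (at $n=2$, $d=1$ it reads $4=5$). The correct form, which your hint ($(n+d)\binom{2n}{n+d}=2n\binom{2n-1}{n+d-1}$ together with Pascal's rule) does produce, is $d\binom{2n}{n+d}=n\bigl[\binom{2n-1}{n+d-1}-\binom{2n-1}{n+d}\bigr]$, and this telescopes to $n\binom{2n-1}{n}=\tfrac n2\binom{2n}{n}$ as you claimed.
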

\begin{proof}
By Lemma~\ref{lem:grass_binomial_descent}, the sampled descent parameters $k_u$ and $k_v$ are independent and identically distributed as $\mathrm{Bin}(n,1/2)$.
Using the identity $\max(x,y)=\frac{x+y}{2}+\frac{|x-y|}{2}$, we have $\E{\max(k_u,k_v)} = \frac{n}{2} + \frac{1}{2}\E{|k_u-k_v|}$. Define the reversed variable $\widetilde k_v:=n-k_v$. By the symmetry of $p=1/2$, $\widetilde k_v\sim \mathrm{Bin}(n,1/2)$ and remains independent of $k_u$. Thus, their sum $S := k_u + \widetilde k_v \sim \mathrm{Bin}(2n, 1/2)$, allowing us to rewrite the difference as $k_u - k_v = S - n$.

A classical identity of De Moivre (see, e.g., Diaconis--Zabell \cite[Eq.~(1.1)]{DiaconisZabell1991}) gives the exact mean absolute deviation for $S\sim\mathrm{Bin}(2n,1/2)$ as $\E{|S-n|} = n\binom{2n}{n}2^{-2n}$. Applying this, followed by Stirling's approximation $\binom{2n}{n}=\frac{4^n}{\sqrt{\pi n}}\left(1+O(n^{-1})\right)$, yields,
\begin{align*}
\E{\max(k_u,k_v)} &= \frac{n}{2} + \frac{1}{2}\E{|S-n|} = \frac{n}{2}+\frac{n}{2}\binom{2n}{n}2^{-2n} \\
&= \frac{n}{2}+\frac{n}{2} \left[ \frac{4^n}{\sqrt{\pi n}}\left(1+O(n^{-1})\right) \right] 2^{-2n}
= \frac{n}{2}+\frac{1}{2\sqrt{\pi}}\sqrt{n}+O(n^{-1/2}). \qedhere
\end{align*}
\end{proof}

\begin{proposition}\label{prop:grass-exp}
Let $u, v\iidsim \mathbb{G}_n$. As $n\to\infty$,
\[
\E{\FS(u, v)} = \frac{3}{2}n-\frac{1}{2\sqrt{\pi}}\sqrt{n}-2+o(1).
\]
\end{proposition}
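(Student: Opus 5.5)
The plan is to combine Proposition~\ref{prop:grass_fs_exact_prob} with linearity of expectation. Write
\[
\FS(u,v) = M_u + M_v - k_{\max} + \Xi .
\]
By Proposition~\ref{prop:grass_fs_exact_prob}, $\E{|\Xi|}$ is exponentially small, so $\E{\Xi}=o(1)$. It therefore suffices to compute
\[
\E{M_u}+\E{M_v}-\E{k_{\max}}
\]
for independent $u,v\sim\mathbb{G}_n$.

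The two pieces are already available. By Lemma~\ref{lem:expected_max}, $\E{M_u}=\E{M_v}=n-1+2^{-n}$, so
\[
\E{M_u+M_v}=2n-2+2^{1-n}.
\]
By Lemma~\ref{lem:expected_max_descent},
\[
\E{k_{\max}}=\tfrac n2+\tfrac{1}{2\sqrt\pi}\sqrt n+O(n^{-1/2}).
\]
Subtracting gives
\[
\tfrac32 n-\tfrac{1}{2\sqrt\pi}\sqrt n-2+O(n^{-1/2})+O(2^{-n}),
\]
and adding $\E{\Xi}=O(e^{-c'n})$ yields the claim with an $o(1)$ error.

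The one point to check is consistency of conventions. Under $\mathbb{G}_n$ the quantity $k$ is the sampled parameter $|V|$, including when $w=\mathrm{id}$. Proposition~\ref{prop:grass_fs_exact_prob} defines $\Xi$ using exactly this $k_{\max}$ and this $M$, with $M=0$ when $V=\varnothing$. Lemmas~\ref{lem:expected_max} and \ref{lem:expected_max_descent} use the same encoding, since Lemma~\ref{lem:grass_binomial_descent} concerns $K=|V|$. The identity event therefore needs no separate treatment: it is absorbed into $\Xi$, whose absolute value is bounded by $2n$. I expect no real obstacle here; the substantive work was done in Proposition~\ref{prop:grass_fs_exact_prob} and in the binomial mean-absolute-deviation estimate.
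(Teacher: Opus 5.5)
Your proposal is correct and matches the paper's proof: write $\FS(u,v)=M_u+M_v-k_{\max}+\Xi$ via Proposition~\ref{prop:grass_fs_exact_prob}, then take expectations using Lemmas~\ref{lem:expected_max} and~\ref{lem:expected_max_descent}, absorbing the exponentially small $\E{\Xi}$ and $2^{1-n}$ terms into the $o(1)$ error. Your observation that the identity permutation is already handled inside $\Xi$, with $k=|V|$ as the sampled parameter throughout, is the convention the paper relies on.
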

\begin{proof}
By Proposition \ref{prop:grass_fs_exact_prob},
\[
\FS(u, v) = M_u + M_v - \max(k_u, k_v) + \Xi,
\]
with $|\E{\Xi}|\le \E{|\Xi|}=O(e^{-cn})$. Taking expectations and using Lemmas \ref{lem:expected_max} and
\ref{lem:expected_max_descent} gives
\begin{align*}
\E{\FS(u, v)}
&=\E{M_u}+\E{M_v}-\E{\max(k_u,k_v)}+\E{\Xi}\\
&=(n-1+2^{-n})+(n-1+2^{-n})-\left(\frac{n}{2}+\frac{1}{2\sqrt{\pi}}\sqrt{n}+o(1)\right)+O(e^{-cn})\\
&=\frac{3}{2}n-\frac{1}{2\sqrt{\pi}}\sqrt{n}-2+2^{1-n}+o(1)+O(e^{-cn})\\
&=\frac{3}{2}n-\frac{1}{2\sqrt{\pi}}\sqrt{n}-2+o(1). \qedhere
\end{align*}
\end{proof}

\subsection{Asymptotic Distribution}
\begin{proposition}\label{prop:grass-dist}
Let $u, v\sim \mathbb{G}_n$, and let $Z_1, Z_2$ be independent standard normal random variables. Then
\[
\frac{\FS(u, v) - \frac{3}{2}n}{\sqrt{n}} \xrightarrow{d} -\frac{1}{2} \max(Z_1, Z_2).
\]
\end{proposition}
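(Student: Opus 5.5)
By Proposition~\ref{prop:grass_fs_exact_prob}, we may write
\[
\FS(u,v)=M_u+M_v-\max(k_u,k_v)+\Xi,
\]
where $\Prob{\Xi\neq 0}=O(e^{-cn})$. We therefore have $\Xi/\sqrt n\xrightarrow{p}0$. By Slutsky's theorem, it suffices to identify the limit law of
\[
\frac{M_u+M_v-\max(k_u,k_v)-\tfrac32 n}{\sqrt n}
=\frac{(M_u-n)+(M_v-n)}{\sqrt n}\;-\;\frac{\max(k_u,k_v)-\tfrac n2}{\sqrt n}.
\]

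The first term is negligible. From the proof of Lemma~\ref{lem:expected_max}, $\Prob{n-M_w\ge t}=\Prob{M_w\le n-t}=2^{-t}$, so $n-M_w$ is tight (geometrically distributed, up to the convention at $V=\varnothing$). Hence $(M_u-n)/\sqrt n\xrightarrow{p}0$, either directly or via Markov's inequality with $\E{n-M_w}=1-2^{-n}$. The same holds for $v$. Note that we never need the joint law of $(M_w,k_w)$, since each $M$-term vanishes in probability on its own.

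For the second term, set $A_n:=(k_u-n/2)/\sqrt n$ and $B_n:=(k_v-n/2)/\sqrt n$. By Lemma~\ref{lem:grass_binomial_descent}, $k_u,k_v$ are independent $\mathrm{Bin}(n,1/2)$. The de~Moivre--Laplace theorem gives $A_n\xrightarrow{d}\tfrac12 Z_1$, since the standard deviation is $\sqrt n/2$, and likewise $B_n\xrightarrow{d}\tfrac12 Z_2$. By independence, and exactly as in the characteristic-function argument of Lemma~\ref{lemma:distY2}, we get $(A_n,B_n)\xrightarrow{d}\tfrac12(Z_1,Z_2)$ with $Z_1,Z_2$ independent. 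The map $(x,y)\mapsto\max(x,y)$ is continuous, so the continuous mapping theorem yields
\[
\frac{\max(k_u,k_v)-n/2}{\sqrt n}=\max(A_n,B_n)\xrightarrow{d}\tfrac12\max(Z_1,Z_2).
\]

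Combining everything with Slutsky's theorem gives convergence to $-\tfrac12\max(Z_1,Z_2)$. The one point needing care is that $\Xi$ is only controlled in probability rather than deterministically. This is sufficient, because convergence in probability to $0$ is exactly what Slutsky requires.
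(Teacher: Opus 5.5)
Your proposal is correct and follows essentially the same route as the paper: it uses the decomposition from Proposition~\ref{prop:grass_fs_exact_prob}, sends $(n-M_u)/\sqrt{n}$, $(n-M_v)/\sqrt{n}$ and $\Xi/\sqrt{n}$ to $0$ in probability, then applies the binomial CLT with independence and the continuous mapping theorem for $\max$, and finishes with Slutsky. The only difference is cosmetic: you normalize by $\sqrt{n}$ and get the limits $\tfrac12 Z_i$ directly, whereas the paper normalizes by $\sqrt{n}/2$ and then pulls out the factor $\tfrac12$.
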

\begin{proof}
By Proposition~\ref{prop:grass_fs_exact_prob}, $\FS(u, v) = M_u + M_v - \max(k_u, k_v) + \Xi$. After normalizing by $\sqrt{n}$ and rearranging, we obtain
\[
\frac{\FS(u, v) - \frac{3}{2}n}{\sqrt{n}} = - \frac{(n - M_u)}{\sqrt{n}} - \frac{(n - M_v)}{\sqrt{n}} - \frac{\max(k_u, k_v) - n/2}{\sqrt{n}} + \frac{\Xi}{\sqrt{n}}.
\]

We first show that the $M$ and $\Xi$ terms vanish in probability. By Lemma~\ref{lem:expected_max}, $\E{n - M_u}=1-2^{-n}=O(1)$. Since $n-M_u\ge 0$, Markov's inequality gives, for any $\epsilon>0$,
\[
\mathbb{P}\!\left(\frac{n-M_u}{\sqrt{n}}>\epsilon\right)\le \frac{\E{n-M_u}}{\epsilon\sqrt{n}}\xrightarrow{n\to\infty}0,
\]
so $(n-M_u)/\sqrt{n}\xrightarrow{p}0$, and similarly $(n-M_v)/\sqrt{n}\xrightarrow{p}0$.

By Proposition~\ref{prop:grass_fs_exact_prob}, $\Prob{\Xi\neq 0}=O(e^{-cn})$.
Moreover, since $m\le n$ we have $\FS(u,v)\le 2m\le 2n$, and $M_u+M_v-k_{\max}\in[0,2n]$, so $|\Xi|\le 2n$
deterministically. Hence for any $\epsilon>0$,
\[
\mathbb{P}\!\left(\frac{|\Xi|}{\sqrt{n}}>\epsilon\right)
\le \Prob{\Xi\neq 0}\xrightarrow{n\to\infty}0,
\]
so $\Xi/\sqrt{n}\xrightarrow{p}0$.

Next, by construction the descent positions satisfy $k_u,k_v\stackrel{\text{i.i.d.}}{\sim}\mathrm{Bin}(n,1/2)$. By the Central Limit Theorem, the normalized vector
\[
\left(\frac{k_u-n/2}{\sqrt{n}/2},\frac{k_v-n/2}{\sqrt{n}/2}\right)
\]
converges in distribution to $(Z_1,Z_2)$, where $Z_1,Z_2\sim\mathcal{N}(0,1)$ are independent. By the Continuous Mapping Theorem (see \cite[Theorem~2.3 (i)]{vanDerVaart1998AsymptoticStatistics}) and \cite[Theorem~7.6]{Billingsley1999Convergence},
\[
\frac{\max(k_u,k_v)-n/2}{\sqrt{n}}
=\frac12\max\!\left(\frac{k_u-n/2}{\sqrt{n}/2},\frac{k_v-n/2}{\sqrt{n}/2}\right)
\xrightarrow{d}\frac12\max(Z_1,Z_2).
\]

Finally, Slutsky's theorem (see \cite[Lemma~2.8 (i)]{vanDerVaart1998AsymptoticStatistics}) implies that $\Xi/\sqrt{n}$ converging to $0$ in probability vanishes, and therefore
\[
\frac{\FS(u, v) - \frac{3}{2}n}{\sqrt{n}} \xrightarrow{d} -\frac{1}{2}\max(Z_1,Z_2),
\]
as claimed.
\end{proof}

\subsection{Transfer from \texorpdfstring{$\mathbb{G}_n$}{G(n)} to \texorpdfstring{$\Unif{\Grass_n}$}{Unif(Grass(n))}} Proposition~\ref{prop:grass-exp} and Proposition~\ref{prop:grass-dist} give the Grassmannian forward-stability asymptotics under the sampling model $\mathbb{G}_n$.  We now transfer these conclusions to the uniform model $\Unif{\Grass_n}$, proving Theorem~\ref{thm:GrassmannianMain}.

\begin{lemma}\label{lem:grass_TV_single}
For $n\ge 2$,
\[
\left\|\mathbb{G}_n-\Unif{\Grass_n}\right\|_{\mathrm{TV}}
=\frac{n(2^n-n-1)}{2^n(2^n-n)}
\le \frac{n}{2^n}.
\]
\end{lemma}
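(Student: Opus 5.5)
The plan is to compute the total variation distance directly from the explicit point masses, using the second expression in \eqref{tv-definition}. Both measures live on $\Grass_n$, with $|\Grass_n|=2^n-n$ as in the proof of Proposition~\ref{prop:grass_rec_uniform_exact}. The $\mathbb{G}_n$ masses were computed above: $2^{-n}$ on each of the $2^n-n-1$ non-identity permutations, and $(n+1)2^{-n}$ on the identity. The uniform measure puts $1/(2^n-n)$ on every element.

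I will split the sum $\frac12\sum_{x}|\mathbb{G}_n(x)-\Unif{\Grass_n}(x)|$ into the identity term and the non-identity terms.

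For each non-identity $w'$, the difference is
\[
\frac{1}{2^n-n}-\frac{1}{2^n}=\frac{n}{2^n(2^n-n)}>0.
\]
Summing over the $2^n-n-1$ non-identity permutations gives $\frac{n(2^n-n-1)}{2^n(2^n-n)}$.

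Since both measures have total mass one, the identity term has exactly the same absolute value. So I do not need to compute it separately: the total of the absolute differences is twice the non-identity total. Halving it yields the claimed exact formula. I will also check the identity term directly as a sanity check:
\[
\frac{n+1}{2^n}-\frac{1}{2^n-n}=\frac{(n+1)(2^n-n)-2^n}{2^n(2^n-n)}=\frac{n(2^n-n-1)}{2^n(2^n-n)},
\]
which agrees.

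For the inequality, I need $\frac{2^n-n-1}{2^n-n}\le 1$, which is immediate because $2^n-n>0$. Then $\frac{n(2^n-n-1)}{2^n(2^n-n)}\le \frac{n}{2^n}$. The hypothesis $n\ge 2$ only ensures that non-identity permutations exist and that the denominators are positive; the formula is consistent at $n=1$ as well, where both sides vanish.

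There is no real obstacle here. The only points needing care are the sign of each difference and the count $|\Grass_n|=2^n-n$, both of which follow from earlier results.
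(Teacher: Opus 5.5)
Your proof is correct and is essentially the paper's argument: both compute the explicit point-mass differences $\frac{1}{2^n-n}-\frac{1}{2^n}=\frac{n}{2^n(2^n-n)}$ on the $2^n-n-1$ non-identity elements and $\frac{n(2^n-n-1)}{2^n(2^n-n)}$ at the identity, then sum and halve. Your mass-balance observation (the identity term equals the total of the same-signed non-identity terms) is a small shortcut the paper does not use, and your direct check reproduces the paper's computation exactly.
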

\begin{proof}
Write $N:=|\Grass_n|=2^n-n$. For $w\sim\mathbb{G}_n$,
\[
\Prob{w=\mathrm{id}}=\frac{n+1}{2^n},
\qquad
\Prob{w=x}=\frac{{1}}{2^n}\ \ \text{for each }x\in\Grass_n\setminus\{\mathrm{id}\}.
\]
For $\widetilde w\sim\Unif{\Grass_n}$ we have $\Prob{\widetilde w=x}=1/N$.
For $x\neq\mathrm{id}$, since $N<2^n$ we have $1/N>1/2^n$, and hence
\[
\left|\frac1{2^n}-\frac1N\right|
=\frac1N-\frac1{2^n}
=\frac{2^n-N}{2^nN}
=\frac{n}{2^nN}.
\]
For $x=\mathrm{id}$ and $n\ge 2$ one checks $(n+1)/2^n>1/N$, so
\[
\left|\frac{n+1}{2^n}-\frac1N\right|
=\frac{n+1}{2^n}-\frac1N
=\frac{(n+1)N-2^n}{2^nN}
=\frac{n(2^n-n-1)}{2^nN}.
\]

Therefore, using Definition~\ref{def:tv} and splitting $\Grass_n$ into $\{\mathrm{id}\}$ and $\Grass_n\setminus\{\mathrm{id}\}$,
\begin{align*}
\left\|\mathbb{G}_n - \Unif{\Grass_n}\right\|_{\mathrm{TV}}
&= \frac{1}{2}\left(
\left|\frac{n+1}{2^n}-\frac{1}{N}\right|+
\sum_{x\in\Grass_n\setminus\{\mathrm{id}\}}
\left|\frac{1}{2^n}-\frac{1}{N}\right|
\right) \\
&=\frac12\left[\frac{n(N-1)}{2^nN} \;+\; \frac{n(N-1)}{2^nN}\right]
=\frac{n(N-1)}{2^nN}
=\frac{n(2^n-n-1)}{2^n(2^n-n)}
\le \frac{n}{2^n}. \qedhere
\end{align*}
\end{proof}

\begin{proof}[Proof of Theorem~\ref{thm:GrassmannianMain}]
  Let $u,v\sim\mathbb{G}_n$ and $\widetilde u,\widetilde v\sim\Unif{\Grass_n}$.
  Let $(u, \widetilde u)$ and $(v, \widetilde v)$ be optimal couplings of $\mathbb{G}_n$ and $\Unif{\Grass_n}$ such that $u,\widetilde{u}$ are independent of $v,\widetilde{v}$.
  By definition of the optimal coupling, we have
  $\mathbb{P}(u \neq \widetilde u) = \|\mathbb{G}_n - \Unif{\Grass_n}\|_{\mathrm{TV}}$ and
  $\mathbb{P}(v \neq \widetilde v) = \|\mathbb{G}_n - \Unif{\Grass_n}\|_{\mathrm{TV}}$.
  By linearity of expectation and Jensen's inequality, we obtain that
  \begin{align*}
    |\E{\FS(u,v)} - \E{\FS(\widetilde u, \widetilde v)}|
    &= |\E{\FS(u,v) - \FS(\widetilde u, \widetilde v)}| \le \E{|\FS(u,v) - \FS(\widetilde u, \widetilde v)|} \\
    &\hspace{-25pt}= \E{|\FS(u,v) - \FS(\widetilde u, \widetilde v)| \cdot \mathbf{1}_{\{(u,v) = (\widetilde u, \widetilde v)\}} + |\FS(u,v) - \FS(\widetilde u, \widetilde v)| \cdot \mathbf{1}_{\{(u,v) \neq (\widetilde u, \widetilde v)\}}} \\
  \end{align*}
  Given that $|\FS(u,v) - \FS(\widetilde u, \widetilde v)| \cdot \mathbf{1}_{\{(u,v) = (\widetilde u, \widetilde v)\}} = 0$ and $|\FS(u,v) - \FS(\widetilde u, \widetilde v)| \le 2n$, we have
  \begin{align*}
    \left|\E{\FS(u,v)} - \E{\FS(\widetilde u, \widetilde v)}\right|&\le \E{|\FS(u,v) - \FS(\widetilde u, \widetilde v)| \cdot \mathbf{1}_{\{(u,v) \neq (\widetilde u, \widetilde v)\}}} \\
    &\le \E{2n \cdot \mathbf{1}_{\{(u,v) \neq (\widetilde u, \widetilde v)\}}} \\
    &= 2n \mathbb{P}\left((u,v) \neq (\widetilde u, \widetilde v)\right) \\
    &\le 2n \left( \mathbb{P}(u \neq \widetilde u) + \mathbb{P}(v \neq \widetilde v) \right) \\
    &= 4n \|\mathbb{G}_n - \Unif{\Grass_n}\|_{\mathrm{TV}}.
  \end{align*}
  Combining this with Lemma~\ref{lem:grass_TV_single}, we obtain
  \[
    \left|\E{\FS(u,v)} - \E{\FS(\widetilde u,\widetilde v)}\right|\le 4n \|\mathbb{G}_n - \Unif{\Grass_n}\|_{\mathrm{TV}} \le \frac{4n^2}{2^n}.
  \]
  Since $4n^2/2^n \to 0$ as $n \to \infty$, $\E{\FS(\widetilde u,\widetilde v)}$ has the same $o(1)$ asymptotic approximation as $\E{\FS(u,v)}$ derived under $\mathbb{G}_n$ in Proposition~\ref{prop:grass-exp}.

  For the distributional convergence, let
  $X_n:=(\FS(u,v)-n)/\sqrt{n}$ and $\widetilde X_n:=(\FS(\widetilde u,\widetilde v)-n)/\sqrt{n}$.
  Since $X_n=\widetilde X_n$ whenever $(u,v)=(\widetilde u,\widetilde v)$, we have
  \begin{align*}
    \Prob{X_n\neq \widetilde X_n}
    &\le \mathbb{P}\left((u,v)\neq (\widetilde u,\widetilde v)\right) \\
    &\le \mathbb{P}(u\neq \widetilde u)+\mathbb{P}(v\neq \widetilde v) \\
    &= 2\|\mathbb{G}_n-\Unif{\Grass_n}\|_{\mathrm{TV}}
    \le \frac{2n}{2^n}
    =o(1).
  \end{align*}
  It follows that, for every $\varepsilon>0$,
  $\Prob{|X_n-\widetilde X_n|>\varepsilon}\le \Prob{X_n\neq \widetilde X_n}=o(1)$,
  so $X_n-\widetilde X_n\to 0$ in probability.
  By Proposition~\ref{prop:grass-dist}, $X_n \Rightarrow -\frac{1}{2}\max(Z_1,Z_2)$.
  Therefore, by Slutsky's theorem (see \cite[Lemma~2.8(i)]{vanDerVaart1998AsymptoticStatistics}),
  $\widetilde X_n \Rightarrow -\frac{1}{2}\max(Z_1,Z_2)$, completing the proof of Theorem~\ref{thm:GrassmannianMain}.
\end{proof}

\section{Boolean Permutations}\label{sec:boolean-expectation}

A \emph{reduced word} for a permutation $w\in S_n$ is a shortest product $s_{a_1}s_{a_2}\cdots s_{a_\ell}$ of simple transpositions $s_i=(i,\,i{+}1)$ that equals $w$.
Since $w$ may admit many reduced words, we compare them lexicographically by their index sequences:
$s_{a_1}\cdots s_{a_\ell} < s_{b_1}\cdots s_{b_\ell}$ if $a_k < b_k$ at the first index $k$ where they differ.
We write $\underline{w}$ for the unique \emph{lexicographically-first} (smallest) reduced word of $w$.

A permutation $w\in S_n$ is \emph{boolean} if it admits a reduced word in which each simple reflection appears at most once. Boolean permutations are fully commutative: any two reduced words for $w$ are connected by commutations $s_as_b=s_bs_a$ for $|a-b|\ge 2$ \cite{Tenner2007}. In particular, all reduced words for $w$ have the same support, and since commutations preserve generator multiplicities, the existence of a reduced word with no repeated generator implies that no reduced word for $w$ repeats a generator. Let $\mathcal{B}_n$ denote the set of boolean permutations in $S_n$. It is known, see \cite{Tenner2007}, that for $n\ge 1$,
\begin{equation}\label{eq:bool-cardinality}
|\mathcal{B}_n|=F_{2n-1}.
\end{equation}
For this section, let $w\sim\Unif{\mathcal{B}_n}$. We will use the shorthand $\chi_i := \chi_i^{\mathcal{B}_n}$.

\begin{remark}
  \label{def:cont-comm-inv}
  A \emph{contiguous commuting inversion} in a reduced word is an adjacent subword $s_a s_b$ with $a > b+1$.
  Such a pair commutes since $|a-b|\ge 2$ based on the fact that $w$ is boolean, so swapping it to $s_b s_a$ yields a lexicographically smaller reduced word for the same permutation.
  Consequently, the lexicographically-first reduced word $\underline{w}$ contains no contiguous commuting inversion.
  \end{remark}

\subsection{Structural Decomposition}

We derive two formulas for the number
\[
N(n,j)\ :=\ \left|\{\,w\in\mathcal{B}_n:\ \rec_j(w)=1\,\}\right|
\]
of boolean permutations $w\in\mathcal{B}_n$ having a record at position $j$. In particular,
\[
\Prob{\rec_j(w)=1}=\frac{N(n,j)}{|\mathcal{B}_n|}=\frac{N(n,j)}{F_{2n-1}}.
\]

For any pair $j\ge i$, let $s_{j\downarrow i}$ denote the product
\[
s_{j\downarrow i}:=s_js_{j-1}\cdots s_i,
\]
and call such an element a \emph{(decreasing) block}. Any reduced word of a permutation has a canonical decomposition as a product of blocks by choosing the blocks to be as large as possible. For instance, the block factorization of the reduced expression
\[
s_2s_3s_1s_7s_6s_5s_8s_7s_6
\]
is
\[
s_{2\downarrow 2}\,s_{3\downarrow 3}\,s_{1\downarrow 1}\,s_{7\downarrow 5}\,s_{8\downarrow 6}.
\]

\subsubsection{A block-boundary encoding}

Fix $n\ge 2$ and let $w\in\mathcal{B}_n$. Let $\underline{w}$ denote the lexicographically-first reduced word of $w$ and write its
maximal block factorization
\[
\underline{w}=s_{j_1\downarrow i_1}\,s_{j_2\downarrow i_2}\cdots s_{j_k\downarrow i_k},
\qquad
i_1\le j_1<i_2\le j_2<\cdots<i_k\le j_k.
\]

For each $t\in[n-1]$, define
\begin{align*}
\mathsf{Pres}_t(w)&:=\mathbf{1}\{s_t\text{ appears in }\underline{w}\},\\
\mathsf{Cont}_t(w)&:=\mathbf{1}\{\text{some block }s_{j\downarrow i}\text{ of }\underline{w}\text{ contains }s_t\text{ and }i<t\}.
\end{align*}

Now set the \emph{block-boundary symbol} $\mathsf{Tag}_t(w)\in\{0,S,C\}$ by
\[
\mathsf{Tag}_t(w):=
\begin{cases}
0, & \mathsf{Pres}_t(w)=0,\\
S, & \mathsf{Pres}_t(w)=1\ \text{and}\ \mathsf{Cont}_t(w)=0,\\
C, & \mathsf{Pres}_t(w)=1\ \text{and}\ \mathsf{Cont}_t(w)=1.
\end{cases}
\]

If $t$ is outside the range $1\leq t\leq n-1$, we use the convention $\mathsf{Pres}_t(w) = \mathsf{Cont}_t(w) = \mathsf{Tag}_t(w) = 0$.

\begin{definition}\label{def:block-tag-word}
For each $m\ge 1$, a \emph{block-tag word} of length~$m$ is a word $T=(T_1,\dots,T_m)\in\{0,S,C\}^m$ such that
\begin{enumerate}[label=(\roman*)]
\item $T_1\in\{0,S\}$, and
\item for every $2\le r\le m$, the condition $T_r=C$ forces $T_{r-1}\in\{S,C\}$.
\end{enumerate}
 For each $m\ge 1$, let $\mathcal{T}_m$ denote the set of block-tag words of length~$m$, and write $\mathcal{T}_0:=\{\varnothing\}$.
\end{definition}

\begin{lemma}\label{lem:block-boundary-encoding}
The map
\[
w\ \longmapsto\ \mathsf{Tag}(w):=\left(\mathsf{Tag}_1(w),\dots,\mathsf{Tag}_{n-1}(w)\right)\in\{0,S,C\}^{n-1}
\]
is a bijection from $\mathcal{B}_n$ onto $\mathcal{T}_{n-1}$.
\end{lemma}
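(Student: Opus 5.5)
We will prove the bijection by showing three things in turn: $\mathsf{Tag}(w)\in\mathcal{T}_{n-1}$, the map is injective, and every block-tag word arises.

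\textbf{Step 1: the tag lies in $\mathcal{T}_{n-1}$.}
First we record what the maximal block factorization of $\underline{w}$ looks like. Since $w$ is boolean, each $s_t$ occurs at most once. So the support of $\underline{w}$ is a disjoint union of the index intervals $[i_r,j_r]$ of its blocks, and $\mathsf{Pres}_t(w)=1$ exactly when $t$ lies in one of these intervals. Moreover, $\mathsf{Cont}_t(w)=1$ exactly when $t$ lies in some $[i_r,j_r]$ with $t>i_r$.

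Condition (i) is then immediate: $t=1$ can never satisfy $i<1$, so $\mathsf{Tag}_1\neq C$.

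For condition (ii), suppose $\mathsf{Tag}_t=C$. Then $t$ and $t-1$ both lie in the same block interval $[i_r,j_r]$, so $\mathsf{Pres}_{t-1}=1$ and hence $\mathsf{Tag}_{t-1}\in\{S,C\}$.

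Along the way we should confirm that the block intervals are disjoint and that the factorization is genuinely determined by $\underline{w}$. Both follow from each generator occurring at most once together with maximality of the blocks.

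\textbf{Step 2: injectivity.}
The tag word determines the set of intervals $[i_r,j_r]$. Reading it left to right, each $S$ starts a new interval and each maximal run $SC\cdots C$ is one interval. It also determines the multiset of blocks $s_{j\downarrow i}$.

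What remains is to show that the \emph{order} of the blocks in $\underline{w}$ is forced. This is where Remark~\ref{def:cont-comm-inv} enters. Consider two consecutive blocks $s_{j\downarrow i}\,s_{j'\downarrow i'}$ in $\underline{w}$.
\begin{itemize}
\item If the intervals are non-adjacent, meaning they are separated by a gap of at least one missing index, then every generator of one commutes with every generator of the other. The last letter $s_i$ and the first letter $s_{j'}$ then form an adjacent pair, and lex-minimality forbids $i>j'+1$. So non-adjacent intervals must appear in increasing order.
\item If the intervals are adjacent, with $j+1=i'$ or $j'+1=i$, then maximality of blocks forbids the concatenation from forming a single decreasing block. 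For instance $s_{j\downarrow i}\,s_{j'\downarrow j+1}$ is fine, while $s_{j'\downarrow j+1}\,s_{j\downarrow i}$ would merge into $s_{j'\downarrow i}$.
\end{itemize}

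The cleaner route is to show that, among all arrangements of the given blocks with distinct supports, the lex-first reduced word with maximal blocks equal to exactly these intervals is unique. Concretely, it is the word listing the blocks in increasing order of $i_r$. One checks this arrangement has no contiguous commuting inversion and that no two adjacent blocks merge.

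A subtlety must be handled here. Two different permutations could in principle share the same interval data but have different relative orders of adjacent blocks, for example $s_1s_2$ versus $s_2s_1$. However, $s_2s_1$ is itself the single block $s_{2\downarrow1}$, with tag $SC$, whereas $s_1s_2$ has tag $SS$. So the order information is encoded precisely by the $C$ symbols.

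The key claim, which is the main obstacle, is therefore the following: in $\underline{w}$, blocks appear with increasing index intervals. Equivalently, two adjacent letters $s_a$ followed by $s_b$ either satisfy $b=a-1$ (same block) or $b>a$. Anything else is either a commuting inversion or forces a merge. Given this claim, $w$ is recovered as the product of the blocks in increasing order, which gives injectivity.

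\textbf{Step 3: surjectivity.}
Given $T\in\mathcal{T}_{n-1}$, decompose its nonzero runs into intervals, each starting at an $S$ and continuing through the following $C$'s. This decomposition is well defined by conditions (i) and (ii). Form the product of the blocks $s_{j\downarrow i}$ in increasing order.

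This word uses each generator at most once, so it is reduced and the resulting permutation is boolean. Is it lex-first? By Remark~\ref{def:cont-comm-inv}, any contiguous commuting inversion could be swapped to give a lex-smaller word, and conversely one must check that a word with no such inversion is indeed lex-first. Here we use full commutativity: all reduced words are commutation-equivalent, and the word with no adjacent descending commuting pair is the unique lex-minimal one in the commutation class.

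Finally, the maximal block factorization of this word recovers exactly the intervals we started from, so its tag is $T$.

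As a sanity check, $|\mathcal{T}_{n-1}|$ satisfies a Fibonacci-type recursion consistent with \eqref{eq:bool-cardinality}. This gives an alternative finish: injectivity plus equal cardinalities already yields a bijection.
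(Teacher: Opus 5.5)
Your proof is correct and follows the paper's route. You verify (i)--(ii) directly, rebuild $\underline{w_T}$ as the increasing product of the decreasing blocks obtained by cutting the nonzero runs of $T$ at each $S$, and check $\mathsf{Tag}(w_T)=T$ using lexicographic minimality.

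Your Step~2 is where you differ, and the difference is in your favor. The paper only proves $\mathsf{Tag}(w_T)=T$. It gets injectivity implicitly from the ordering $i_1\le j_1<i_2\le j_2<\cdots$, which it simply writes into its setup for the maximal block factorization of $\underline{w}$. Your bullet argument actually justifies that ordering: a separated lower block would create a contiguous commuting inversion, an abutting lower block would merge by maximality, supports are disjoint, and consecutive pairs then chain together. That is exactly what injectivity needs.

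One justification in Step~3 needs repair. The claim ``no adjacent descending commuting pair implies lex-first'' does not follow from full commutativity alone. It fails for general commutation relations: with letters $a<b<c$ where $b$ commutes with $a$ and $c$ but $a,c$ do not, the word $cab$ has no such pair, yet $bca$ is smaller. The paper obtains the claim from Hersh's Corollary~6.2.

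In type $A$ you can also argue it directly. If your word first differs from the lex-first word at a letter $s_x$ where the latter has $s_y$ with $y<x$, then $s_y$ occurs later in your word and commutes with every letter in between, including $s_x$. So $x\ge y+2$. With no descending commuting pair, each next index is at least the previous index minus one, and it cannot equal $y\pm1$ or $y$. Hence all those indices stay $\ge y+2$, and the letter immediately before $s_y$ forms a descending commuting pair with it, a contradiction.

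Alternatively, your cardinality finish makes this step unnecessary. The two-state recursion gives $|\mathcal{T}_{n-1}|=F_{2n-1}=|\mathcal{B}_n|$, so Steps~1 and~2 already suffice. You only called it a consistency check; to use it, you need to actually compute the count.
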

\begin{proof}
Given $w$, we clearly have $\mathsf{Tag}_1(w)\neq C$, since no block can contain $s_1$ with minimal generator index strictly less than $1$.
For $t\ge 2$, if $\mathsf{Tag}_t(w)=C$, then some block, say $s_{j\downarrow i}$, of $\underline{w}$ contains $s_t$ and has minimal generator index $i<t$. Since the
block is decreasing, it also contains $s_{t-1}$. Hence $\mathsf{Pres}_{t-1}(w)=1$, and therefore
$\mathsf{Tag}_{t-1}(w)\in\{S,C\}$. This proves that $\mathsf{Tag}(w)$ satisfies (i)--(ii).

Conversely, let $T=(T_1,\dots,T_{n-1})$ satisfy (i)--(ii). We reconstruct a reduced word
$\underline{w_T}$ as follows. Let
\[
I:=\{t:\ T_t\in\{S,C\}\}\subseteq[n-1],
\]
and partition $I$ into maximal consecutive intervals
\[
[a_1,b_1],\ [a_2,b_2],\ \dots,\ [a_m,b_m]
\qquad\text{with}\qquad
a_1\le b_1<a_2\le b_2<\cdots<a_m\le b_m.
\]
For the first interval, its first entry cannot be $C$ by~(i). For every later interval, the preceding symbol is $0$, so condition~(ii) implies that its first entry cannot be $C$. Hence the first entry in each interval is labeled $S$. Within each interval $[a_q,b_q]$, further split at the positions
where $T_t=S$. This yields a collection of consecutive intervals
\[
[i_1,j_1],\ [i_2,j_2],\ \dots,\ [i_k,j_k]
\qquad\text{with}\qquad
i_1\le j_1<i_2\le j_2<\cdots<i_k\le j_k,
\]
and we define
\[
\underline{w_T}:=s_{j_1\downarrow i_1}\,s_{j_2\downarrow i_2}\cdots s_{j_k\downarrow i_k}.
\]

Each simple reflection appears at most once in $\underline{w_T}$ by construction. Therefore no braid relation
$s_as_{a+1}s_a=s_{a+1}s_as_{a+1}$ can occur in $\underline{w_T}$, since such a relation already requires a repeated generator.
Moreover, commutation relations preserve the multiplicity of each generator, so starting from $\underline{w_T}$ no sequence of commutations
can ever produce a subword $s_a^2$. Thus no length-reducing relation is ever available, and $\underline{w_T}$ is a reduced word for a
Boolean permutation $w_T\in\mathcal{B}_n$.

Moreover, the block factorization of $\underline{w_T}$ is maximal by construction. Indeed, each interval $[i_r,j_r]$ contributes
exactly the block $s_{j_r\downarrow i_r}$, and we cut precisely at the positions labeled $S$. It follows that for $t\notin I$ the generator
$s_t$ does not appear, so $\mathsf{Tag}_t(w_T)=0$. If $t=i_r$ for some $r$, then $t$ is the minimal generator index of its block,
so $\mathsf{Tag}_t(w_T)=S$. If $i_r<t\le j_r$, then the block $s_{j_r\downarrow i_r}$ contains $s_t$ and satisfies $i_r<t$, so
$\mathsf{Tag}_t(w_T)=C$. Thus $\mathsf{Tag}(w_T)=T$.

It remains to check that $\underline{w_T}$ is the lexicographically-first reduced word of $w_T$. Since
$w_T$ is Boolean, it is fully commutative, so any two reduced words for $w_T$ are connected by commutation moves
$s_as_b=s_bs_a$ with $|a-b|\ge 2$. In this fully commutative setting, Corollary~6.2 of Hersh~\cite{Hersh05} shows that
the only condition to check for a reduced word to be lexicographically first is condition~(1) of that corollary, namely that every adjacent commuting pair
appears in increasing-index order. In $\underline{w_T}$, this condition holds; within a block the indices decrease by $1$, so no
adjacent commuting pair occurs, while across distinct consecutive blocks the indices increase by the ascending-block property.
Therefore $\underline{w_T}$ is the lexicographically-first reduced word of $w_T$.

We have shown that if $T$ satisfies (i)--(ii), then $\mathsf{Tag}(w_T)=T$, and earlier we showed that
$\mathsf{Tag}(w)$ satisfies (i)--(ii) for every $w\in\mathcal{B}_n$. Therefore $w\longmapsto \mathsf{Tag}(w)$ is a bijection
from $\mathcal{B}_n$ onto the set of sequences $T\in\{0,S,C\}^{n-1}$ satisfying (a)--(b), with inverse
$T\longmapsto w_T$.
\end{proof}

\begin{lemma}\label{lem:chi-as-local}
Let $w\in\mathcal{B}_n$. For every $j\in[n]$,
\[
\chi_j(w)=
\mathbf{1}\!\left\{\,\mathsf{Tag}_{j-1}(w)\in\{S,C\}\ \text{and}\ \mathsf{Tag}_j(w)\neq S\,\right\}.
\]
\end{lemma}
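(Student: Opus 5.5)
The plan is to compute $w$ explicitly from its lexicographically-first reduced word $\underline{w}=s_{j_1\downarrow i_1}\cdots s_{j_k\downarrow i_k}$, given by Lemma~\ref{lem:block-boundary-encoding}, and read off which positions are records. The key observation is that the blocks act on disjoint supports except where adjacent blocks share a boundary. Block $r$ acts on $\{i_r,\dots,j_r+1\}$, and block $r+1$ acts on $\{i_{r+1},\dots,j_{r+1}+1\}$. These sets overlap exactly when $i_{r+1}=j_r+1$, which is the case where tag $S$ at $t=i_{r+1}$ immediately follows a present generator. We will use the convention $w(p)=$ value in position $p$. Composing on the right by blocks in increasing order, we must track how the one-line notation changes.

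First we compute a single block. As a permutation, $s_{j\downarrow i}=s_js_{j-1}\cdots s_i$ sends (in one-line notation) positions $i,i+1,\dots,j+1$ to values $j+1,i,i+1,\dots,j$. That is, it is the cycle that places the maximum $j+1$ at position $i$ and shifts the rest right. We will verify this by induction on $j-i$. Right multiplication by $s_t$ swaps positions $t,t+1$, so we can build the product left to right.

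Next we handle a maximal run of present generators $[a,b]$ split into blocks $[i_1,j_1],\dots,[i_q,j_q]$ with $i_{r+1}=j_r+1$. We process it left to right: apply $w\mapsto w\,s_{j_r\downarrow i_r}$, which acts on positions $i_r..j_r+1$. We claim by induction that after processing blocks $1..r$, the positions $a..j_r+1$ hold a permutation of $\{a,\dots,j_r+1\}$. In that permutation, position $j_r+1$ holds the value $j_{r-1}+1$, carried over from the previous block's last slot; when $r=1$ this value is $a$.

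We then apply the next block, which rotates positions $i_{r+1}..j_{r+1}+1$. Its first slot $i_{r+1}=j_r+1$ receives the value currently at position $j_{r+1}+1$, namely $j_{r+1}+1$, the largest so far. The old content shifts right. Concretely, within the run, each block start $i_r$ carries the value $j_r+1$, a new maximum exceeding everything before it, so it is a record. Each interior position $p$ with $i_r<p\le j_r+1$ carries a smaller value ($p-1$, or the carried-over value at $p=i_r+1$), and this value is below the max $j_r+1$ already seen, so it is a non-record. The one exception is position $j_r+1$ when it is itself the start of the next block.

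Finally we translate this into tags. Position $j$ is a non-record iff $j$ lies strictly inside the support of some block, i.e. $i_r<j\le j_r+1$, and $j$ is not the start of the next block. The condition $i_r<j\le j_r+1$ says $s_{j-1}$ is present, i.e. $\mathsf{Tag}_{j-1}\in\{S,C\}$. The exclusion "$j$ is not a block start" says $\mathsf{Tag}_j\neq S$. Note that $\mathsf{Tag}_j=0$ or $C$ are both allowed: $C$ means $j$ is interior to the same block, and $0$ means $j=j_r+1$ is the last slot. Positions outside all runs are fixed points with everything before them smaller, so they are records; this matches $\mathsf{Tag}_{j-1}=0$. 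The boundary conventions handle $j=1$ and $j=n$.

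The main obstacle is keeping the bookkeeping of the chained blocks correct, specifically the value carried into position $j_r+1$ when the next block starts there. I would handle it with the explicit inductive invariant above, stated for positions within a run, and check it on the paper's example $s_{2\downarrow2}s_{3\downarrow3}s_{1\downarrow1}\cdots$ after reordering it into lex-first form.
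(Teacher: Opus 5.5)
Your route is correct and differs from the paper's. The paper never computes $w$. It splits into four cases according to $(\mathsf{Tag}_{j-1}(w),\mathsf{Tag}_j(w))$ and follows only the unique occurrences of $s_{j-1}$ and $s_j$. For example, when $\mathsf{Tag}_j=S$ the block through $s_j$ is $s_{k\downarrow j}$, which imports a value $>j$ into position $j$, and all later blocks fix positions $\le j$.

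You instead compute the whole one-line notation run by run and then read off every record at once. The ingredients are:
\begin{itemize}
\item right multiplication by $s_{j\downarrow i}$ moves the content of position $j+1$ to position $i$ and shifts positions $i,\dots,j$ one step right;
\item distinct runs have disjoint supports;
\item within a run, the block starts $i_r$ receive the successive new maxima $j_r+1$.
\end{itemize}
This gives more (an explicit description of $w$), at the price of the chained-block bookkeeping that the paper's local argument avoids entirely.

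That bookkeeping is where your write-up slips. The invariant ``position $j_r+1$ holds $j_{r-1}+1$ (and $a$ when $r=1$)'' is false:
\begin{itemize}
\item for $w=s_2s_1=312$ (one block $[1,2]$), position $3$ holds $2$, not $a=1$;
\item for $w=s_1s_2=231$ (blocks $[1,1],[2,2]$), position $3$ holds $1$, not $j_1+1=2$.
\end{itemize}
In fact the value $j_{r-1}+1$ is parked at position $i_{r-1}$ by block $r-1$ and never moves again, so it is never at position $j_r+1$. You have conflated the position of the previous block's last slot with its content. The correct statement is: after block $r$, position $j_r+1$ holds $j_r$ if $j_r>i_r$, and holds whatever was at position $i_r$ before block $r$ if $j_r=i_r$.

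You do not actually need the exact value. The other half of your invariant says positions $a,\dots,j_r+1$ carry a permutation of $\{a,\dots,j_r+1\}$, with $j_r+1$ at position $i_r$. That already shows the carried value lies in $\{a,\dots,j_r\}$, hence is below the next maximum $j_{r+1}+1$. This is all your record analysis (your parenthetical ``$p-1$, or the carried-over value'') uses.

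Finally, the paper's example $s_2s_3s_1s_7s_6s_5s_8s_7s_6$ repeats $s_6$ and $s_7$. It is therefore not Boolean and is not a valid test case for this lemma.
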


\begin{proof}
The case $j=1$ is immediate, since position $1$ is always a record and $\mathsf{Tag}_j(w)=0$. Fix $2\le j\le n-1$.

\noindent\textit{Case 1:} $\mathsf{Tag}_{j-1}(w)=0$. Equivalently, $s_{j-1}$ does not appear in $\underline{w}$. Then no swap crosses the boundary between positions $j-1$ and $j$, so the
first $j-1$ positions still contain exactly the values $\{1,\dots,j-1\}$. Hence every entry in positions $1,\dots,j-1$ is less than $j$, so $j$ is
a record and $\chi_j(w)=0$.

\noindent\textit{Case 2:} $\mathsf{Tag}_{j-1}(w)\in\{S,C\}$ and $\mathsf{Tag}_j(w)=0$. Then $s_{j-1}$ appears, while $s_j$ does not appear. We read $\underline{w}$ from left to right as a sequence of adjacent swaps applied to the identity word
$1\,2\,\cdots\,n$, and each $s_i$ swaps the entries in positions $i$ and $i+1$.  Since $w$ is boolean, $s_{j-1}$ appears exactly once in $\underline{w}$, so before this occurrence every swap has index $\neq j-1$ and hence preserves the set of values occupying the first $j-1$ positions, which remains $\{1,2,\dots,j-1\}$.  Applying $s_{j-1}$ swaps positions $j-1$ and $j$, bringing some value smaller than $j$ into position $j$.  No later swap can change the entry in position $j$, since that would require another occurrence of $s_{j-1}$ or $s_j$.  Consequently, $w(j)<j$, forcing $\rec_j(w)=0$. Hence $\chi_j(w)=1$.

\noindent\textit{Case 3:} $\mathsf{Tag}_{j-1}(w)\in\{S,C\}$ and $\mathsf{Tag}_j(w)=C$. Then $s_{j-1}$ and $s_j$ both appear, and the block of $\underline{w}$ containing $s_j$ has minimal generator index strictly less than
$j$. Equivalently, that block contains both $s_j$ and $s_{j-1}$, and neither occur anywhere else in the word. Applying $s_j$ swaps positions $j$ and $j+1$, bringing some value greater than $j$ into position $j$, and the subsequent $s_{j-1}$ moves this value into position $j-1$, leaving position $j$ occupied by a value less than or equal $j$.  No later swap can move that greater than $j$ value back to the right of position $j-1$ or change the entry in position $j$, since that would require another occurrence of $s_{j-1}$ or $s_j$.  Consequently, the final permutation has a value greater than $j$ among its first $j-1$ entries while $w(j)\le j$, so $\rec_j(w)=0$. Hence $\chi_j(w)=1$.

\noindent\textit{Case 4:} $\mathsf{Tag}_{j-1}(w)\in\{S,C\}$ and $\mathsf{Tag}_j(w)=S$. Then $s_{j-1}$ and $s_j$ both appear, and the block containing $s_j$ has minimal generator index exactly $j$, so it has the form
$s_{k\downarrow j}$ for some $k\ge j$. Before this block is applied, no occurrence of $s_j$ has yet appeared, so the first $j$ positions
still contain exactly the values $\{1,\dots,j\}$. The letter $s_j$ swaps positions $j$ and $j+1$, thereby importing some value greater
than $j$ into position $j$. Since the block has minimal index $j$, it contains no $s_{j-1}$, so this value is not moved further to the
left within the block. By the ascending block factorization, every later block uses only indices strictly greater than $j$, and hence fixes
positions $1,\dots,j$. Therefore, in the final permutation, the entry in position $j$ is greater than every entry in positions $1,\dots,j-1$,
so $\rec_j(w)=1$ and thus $\chi_j(w)=0$.

These four cases prove the stated formula for $2\le j\le n-1$. For $j=n$, we have $\rec_n(w)=1$ if and only if $w(n)=n$. For Boolean permutations, this is equivalent to the absence of $s_{n-1}$ in any
reduced word, hence to $\mathsf{Tag}_{n-1}(w)=0$, and $\mathsf{Tag}_n(w)$ is guaranteed to be 0. Therefore $\chi_n(w)=1-\rec_n(w)=\mathbf{1}\!\left\{\,\mathsf{Tag}_{n-1}(w)\in\{S,C\}\,\right\}$, as claimed.
\end{proof}

\subsubsection{Formulas for $N(n,j)$}

For $I\subseteq[n]$, let $S_I$ be the subgroup of $S_n$ consisting of permutations that fix $[n]\setminus I$ pointwise, and set $\mathcal{B}_I:=\mathcal{B}_n\cap S_I$. Let $\mathcal{B}_{n,j}:=\{\,w\in\mathcal{B}_n: \rec_j(w)\!=\!1 \}$; the next lemma shows that this set has a simple structure.

\begin{lemma}\label{lem:bool-rec-recurrence}
Define $\mathcal{C}_{1,j}=\{w\in \mathcal{B}_{n,j}: \mathsf{Tag}_{j-1}(w)=0\}$ and $\mathcal{C}_{2,j}=\mathcal{B}_{n,j}\setminus \mathcal{C}_{1,j}$.
Then every $w\in \mathcal{B}_{n,j}$ admits a unique factorization $w=w_1w_2$ with
\[
w_1\in \mathcal{B}_{[j]}\subseteq \mathcal{B}_n,
\qquad
w_2\in \mathcal{B}_{[j,n]}\subseteq \mathcal{B}_n,
\]
and the map $w\mapsto (w_1,w_2)$ restricts to bijections
\[
\mathcal{C}_{1,j}\ \longrightarrow\ \mathcal{B}_{[j-1]}\times \mathcal{B}_{[j,n]},
\qquad\qquad
\mathcal{C}_{2,j}\ \longrightarrow\ (\mathcal{B}_{[j]}\setminus \mathcal{B}_{[j-1]})\times (\mathcal{B}_{[j,n]}\setminus \mathcal{B}_{[j+1,n]}).
\]
\end{lemma}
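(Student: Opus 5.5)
The plan is to work entirely in the block-tag encoding of Lemma~\ref{lem:block-boundary-encoding}, combined with the record criterion of Lemma~\ref{lem:chi-as-local}. By Lemma~\ref{lem:chi-as-local}, $\rec_j(w)=1$ holds exactly when either $\mathsf{Tag}_{j-1}(w)=0$, or $\mathsf{Tag}_{j-1}(w)\in\{S,C\}$ and $\mathsf{Tag}_j(w)=S$. These two alternatives are precisely $\mathcal{C}_{1,j}$ and $\mathcal{C}_{2,j}$.

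In either case, no block of $\underline{w}$ straddles the boundary between generators $s_{j-1}$ and $s_j$. In the first case this holds because $s_{j-1}$ is absent. In the second case it holds because $s_j$ begins a new block (its tag is $S$), so the block containing $s_j$ has minimal index $j$.

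Since the blocks of $\underline{w}$ are ascending, the word then splits as $\underline{w}=\underline{w}_{\le j-1}\,\underline{w}_{\ge j}$. Here the first factor consists of the blocks using generators $s_1,\dots,s_{j-1}$, and the second consists of the blocks using $s_j,\dots,s_{n-1}$. Setting $w_1,w_2$ to be the corresponding products gives $w_1\in\mathcal{B}_{[j]}$ and $w_2\in\mathcal{B}_{[j,n]}$. Both are Boolean because subwords of a repetition-free reduced word are repetition-free and reduced.

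For uniqueness, I would argue directly on supports. If $w=w_1w_2=w_1'w_2'$ with $w_1,w_1'\in S_{[j]}$ and $w_2,w_2'\in S_{[j,n]}$, then $w_1'^{-1}w_1=w_2'w_2^{-1}$ lies in $S_{[j]}\cap S_{[j,n]}$. This intersection is trivial, since each of its elements fixes everything except possibly $j$, and a permutation moving only one point is the identity. Hence $w_1=w_1'$ and $w_2=w_2'$.

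Next I identify the images. On $\mathcal{C}_{1,j}$ the factor $w_1$ avoids $s_{j-1}$, so $w_1\in\mathcal{B}_{[j-1]}$, while $w_2\in\mathcal{B}_{[j,n]}$ is arbitrary. On $\mathcal{C}_{2,j}$ the factor $w_1$ uses $s_{j-1}$, so $w_1\in\mathcal{B}_{[j]}\setminus\mathcal{B}_{[j-1]}$. The factor $w_2$ uses $s_j$, so $w_2\notin\mathcal{B}_{[j+1,n]}$. Here I use that a Boolean permutation lies in $S_I$ iff its reduced words only use generators inside $I$; this holds because all reduced words share the same support.

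For the inverse map, given a pair $(w_1,w_2)$ in the target set, I concatenate tag words. The tags of $w_1$ occupy positions $1,\dots,j-1$, and the tags of $w_2$ occupy positions $j,\dots,n-1$. I would check that the tag word of $w_2$, viewed in the shifted range starting at $j$, has first entry in $\{0,S\}$, exactly as condition (i) of Definition~\ref{def:block-tag-word} requires after the shift.

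The concatenation is then a valid block-tag word: condition (ii) at position $j$ is automatic because $T_j\neq C$. By Lemma~\ref{lem:block-boundary-encoding} it corresponds to a Boolean $w$. Its lex-first word is the concatenation of the two lex-first words, so $w=w_1w_2$. By Lemma~\ref{lem:chi-as-local}, $\rec_j(w)=1$. Moreover, $w$ lands in $\mathcal{C}_{1,j}$ or $\mathcal{C}_{2,j}$ according to whether $s_{j-1}$ appears, and in the second case $T_j=S$ exactly when $s_j$ appears. So the maps are mutually inverse.

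The main obstacle I expect is the shifted encoding. I need to make sure that the tags of $w$ at positions $\ge j$ coincide with the tags of $w_2$ computed in its own support, and likewise for $w_1$. This requires that the maximal block factorization and the lex-first word of the product are the concatenations of those of the factors. I would deduce this from the characterization used in the proof of Lemma~\ref{lem:block-boundary-encoding} (Hersh's criterion): the concatenated word has no adjacent commuting inversion, because the last letter of the first word has index $\le j-1$ and the first letter of the second word has index $\ge j$.
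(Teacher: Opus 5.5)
Your proof is correct. The forward direction matches the paper's argument exactly: you read off $\mathcal{C}_{1,j}$ and $\mathcal{C}_{2,j}$ as Cases 1 and 4 of Lemma~\ref{lem:chi-as-local}, note that no block of $\underline{w}$ meets both $\{1,\dots,j-1\}$ and $\{j,\dots,n-1\}$, split $\underline{w}$ into its small and large letters, and get uniqueness from $S_{[j]}\cap S_{[j,n]}=\{1\}$.

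Where you differ is the converse. The paper takes $w_1\in\mathcal{B}_{[j]}\setminus\mathcal{B}_{[j-1]}$ and $w_2\in\mathcal{B}_{[j,n]}\setminus\mathcal{B}_{[j+1,n]}$ and checks $\rec_j(w_1w_2)=1$ directly in one-line notation. When the unique $s_j$ of $\underline{w_2}$ is applied, position $j+1$ holds a value $>j$, which is imported into position $j$. Meanwhile positions $<j$ hold only values $\le j$.

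You instead glue the two tag words, use $T_j\neq C$ to see that the result lies in $\mathcal{T}_{n-1}$, and re-apply Lemma~\ref{lem:chi-as-local}. Your route stays inside the encoding and makes membership in $\mathcal{C}_{1,j}$ versus $\mathcal{C}_{2,j}$ immediate, since it is read off from $T_{j-1}$. The cost is that you must know $\mathsf{Tag}(w_1w_2)$ is the concatenation. The paper's route never needs to identify the lex-first word of the product.

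The obstacle you flag is already handled by the construction in Lemma~\ref{lem:block-boundary-encoding}. Since $T_j\in\{0,S\}$, the intervals cut out of the glued word $T$ are exactly those of $\mathsf{Tag}(w_1)$ together with those of $\mathsf{Tag}(w_2)$. Hence $\underline{w_T}=\underline{w_1}\,\underline{w_2}$ and $\mathsf{Tag}(w_T)=T$ by that lemma, with no separate appeal to Hersh's criterion needed (though your Hersh argument also works).
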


\begin{proof}
By Lemma~\ref{lem:chi-as-local}, $\mathcal{C}_{2,j} = \{w\in \mathcal{B}_{n,j}: \mathsf{Tag}_{j-1}(w)\in\{S,C\} \ \text{and}\ \mathsf{Tag}_j(w)= S\}$. In the proof of that lemma, $\mathcal{C}_{1,j}$ is Case 1, while $\mathcal{C}_{2,j}$ is Case 4.

By the proof of Lemma~\ref{lem:block-boundary-encoding}, $\underline{w}$ is an ascending product of decreasing blocks. Call a simple reflection $s_i$ with $i<j$ a \emph{small simple reflection}, and $s_i$ with $i\geq j$ a \emph{large simple reflection}. We claim that in $\mathcal{B}_{n,j}$ no small simple reflection shares a block with a large simple reflection. For $\mathcal{C}_{1,j}$, this is because $s_{j-1}$ doesn't appear in $\underline{w}$, while for $\mathcal{C}_{2,j}$, it is because $\mathsf{Cont}_j(w)=S$, indicating that the block containing $s_j$ has the form $s_{k\downarrow j}$. Hence no block of $\underline{w}$ meets both index sets
$\{1,\dots,j-1\}$ and $\{j,\dots,n-1\}$, and since $\underline{w}$ is an ascending product of blocks, every generator $s_i$ with $i\le j-1$
occurs to the left of every generator $s_i$ with $i\ge j$.  Thus $\underline{w}$ splits as a concatenation
\[
\underline{w}=\underline{w_1}\,\underline{w_2},
\]
where $\underline{w_1}$ consists of the small simple reflections appearing in $\underline{w}$ and $\underline{w_2}$ consists of the large simple reflections appearing in $\underline{w}$.  Let $w_1,w_2$ be the permutations represented by these reduced words.  By
construction, $w_1\in\mathcal{B}_{[j]}$ and $w_2\in\mathcal{B}_{[j,n]}$, and $w=w_1w_2$.  This factorization is unique since
$S_{[j]}\cap S_{[j,n]}=\{1\}$.

Now, if $w_1\in \mathcal{B}_{[j-1]}$, then $s_{j-1}$ does not appear, so $\underline{w_1}$ uses only generators $s_i$ with $i\le j-2$ and
the first $j-1$ positions of $w$ contain only values less than $j$.  Hence $w\in\mathcal{C}_{1,j}$, while $w_2$ may be any element of
$\mathcal{B}_{[j,n]}$, giving the first bijection.

On the other hand, if $w_1\notin \mathcal{B}_{[j-1]}$ then $s_{j-1}$ does appear, and as argued above, $s_j$ also
appears, hence $w_2\notin \mathcal{B}_{[j+1,n]}$.  Conversely, fix
\[
w_1\in \mathcal{B}_{[j]}\setminus \mathcal{B}_{[j-1]},
\qquad
w_2\in \mathcal{B}_{[j,n]}\setminus \mathcal{B}_{[j+1,n]},
\]
and set $w:=w_1w_2$.  Then $s_{j-1}$ appears in $\underline{w_1}$, and since $s_{j-1}$ is the only generator in $S_{[j]}$ that can move the
value $j$ out of position $j$, this forces the value $j$ to lie in some position less than $j$ after applying $w_1$; because $w_2\in S_{[j,n]}$
fixes positions less than $j$ pointwise, this remains true in $w$, so $w\notin\mathcal{C}_{1,j}$.

Moreover, since $s_j$ appears in $\underline{w_2}$ and appears nowhere else, at the moment $s_j$ is applied no value less than or equal $j$ can be in
position $j+1$.  Hence this swap imports a value greater than $j$ into position $j$.
Because $w_2$ fixes positions less than $j$, all entries in positions less than $j$ are less than or equal $j$. Thus, $\rec_j(w)=1$ and therefore $w\in\mathcal{C}_{2,j}$.
This gives the second bijection.
\end{proof}

\begin{example}
Let $n=5$ and $j=3$. The sets $\mathcal{C}_{1,j}$ and $\mathcal{C}_{2,j}$
are
\[
\mathcal{C}_{1,j}
=
\{1,\, s_3,\, s_4,\, s_3s_4,\, s_4s_3,\, s_1,\, s_1s_3,\, s_1s_4,\, s_1s_3s_4,\, s_1s_4s_3\},
\]
\[
\mathcal{C}_{2,j}
=
\{s_2s_3,\, s_2s_3s_4,\, s_2s_4s_3,\, s_1s_2s_3,\, s_1s_2s_3s_4,\, s_1s_2s_4s_3,\, s_2s_1s_3,\, s_2s_1s_3s_4,\, s_2s_1s_4s_3\},
\]
where we list elements of $\mathcal{B}_{5,3}$ by their lexicographically-first reduced word.  In each case, $w_1$ is represented by the subword of $\underline{w}$ consisting of the letters $s_i$ with $i\le 2$, and $w_2$ is
represented by the subword of $\underline{w}$ consisting of the letters $s_i$ with $i\ge 3$.
\end{example}

The decomposition in Lemma~\ref{lem:bool-rec-recurrence} gives a first expression for $N(n,j)$.

\begin{corollary}\label{cor:Nnj-fib}
For all $n\ge j\ge 1$,
\begin{equation}\label{eq:Nnj-fib}
N(n,j)
=
F_{2j-3} F_{2n-2j+1}
+
(F_{2j-1}-F_{2j-3})(F_{2n-2j+1} - F_{2n-2j-1}).
\end{equation}
\end{corollary}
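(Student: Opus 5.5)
The plan is to take cardinalities in the two bijections of Lemma~\ref{lem:bool-rec-recurrence}. Since $\mathcal{B}_{n,j}=\mathcal{C}_{1,j}\sqcup\mathcal{C}_{2,j}$, this gives
\[
N(n,j)=|\mathcal{B}_{[j-1]}|\,|\mathcal{B}_{[j,n]}|+\bigl(|\mathcal{B}_{[j]}|-|\mathcal{B}_{[j-1]}|\bigr)\bigl(|\mathcal{B}_{[j,n]}|-|\mathcal{B}_{[j+1,n]}|\bigr).
\]
It then remains to identify each factor with a Fibonacci number using \eqref{eq:bool-cardinality}.

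For an interval $I=[a,b]$ of size $m=b-a+1\ge 1$, the Boolean elements of $\mathcal{B}_n$ lying in $S_I$ are exactly the Boolean permutations whose reduced words use only $s_a,\dots,s_{b-1}$. The first step is to justify this. A reduced word for an element of $S_I$ can only involve generators $s_i$ with $a\le i\le b-1$; otherwise some value would leave $I$. Since all reduced words of a Boolean element have the same support, the claim follows. Shifting indices then gives a bijection $\mathcal{B}_I\cong\mathcal{B}_m$. A cleaner alternative is Lemma~\ref{lem:block-boundary-encoding}: $\mathcal{B}_I$ corresponds to block-tag words supported on positions $a,\dots,b-1$, which are block-tag words of length $m-1$.

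This gives the following identifications:
\[
|\mathcal{B}_{[j-1]}|=F_{2j-3},\qquad |\mathcal{B}_{[j]}|=F_{2j-1},
\]
\[
|\mathcal{B}_{[j,n]}|=F_{2(n-j+1)-1}=F_{2n-2j+1},\qquad |\mathcal{B}_{[j+1,n]}|=F_{2n-2j-1}.
\]
Substituting these yields \eqref{eq:Nnj-fib}.

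The only delicate point, which I expect to be the main obstacle, is the boundary cases $j=1$ and $j=n$, where some intervals are empty. For $j=1$, $\mathcal{B}_{[0]}=\mathcal{B}_\varnothing$ is the trivial group and should contribute $1=F_{-1}$. Also $|\mathcal{B}_{[1]}|=1=F_1$, so the second term vanishes, consistent with $\mathcal{C}_{2,1}=\varnothing$. For $j=n$, $\mathcal{B}_{[n+1,n]}$ is trivial with size $1=F_{-1}$, and $|\mathcal{B}_{[n,n]}|=1=F_1$, so the second term is again $0$. These checks use the convention $F_{-m}=(-1)^{m+1}F_m$, which gives $F_{-1}=1$.

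I would also confirm that the lemma's bijections hold in these degenerate cases, where $\mathsf{Tag}_0=0$ by convention, and check the case $j=2$. Finally, I would sanity-check the formula against the example $n=5$, $j=3$: there $F_3F_5+(F_5-F_3)(F_5-F_3)=2\cdot5+3\cdot3=19$, which matches $10+9$.
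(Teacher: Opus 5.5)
Your proposal is correct and follows the paper's argument. The paper derives the corollary directly from the decomposition in Lemma~\ref{lem:bool-rec-recurrence}: it takes cardinalities in the two bijections and uses $|\mathcal{B}_{[a,b]}|=F_{2(b-a+1)-1}$, which comes from \eqref{eq:bool-cardinality}, with the empty-interval boundary cases $j=1$ and $j=n$ absorbed by $F_{-1}=1$ exactly as you describe.
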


Next, we prove a second, simpler expression for $N(n,j)$.  We will use the Tagiuri--Vajda identity~\cite{V89}, valid for all integers $r,a,b\in\mathbb Z$,
\begin{equation}\label{eq:tagiuri-vajda}
F_{r+a}F_{r+b}-F_rF_{r+a+b}=(-1)^rF_aF_b.
\end{equation}

\begin{lemma}\label{lem:fib-identity}
For all integers $m,k\in\mathbb Z$,
\begin{equation}\label{eq:fib-identity}
F_{m+k+1}=F_{m+1}F_{k+1}+F_mF_k.
\end{equation}
\end{lemma}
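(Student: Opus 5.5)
The identity is the special case $r=1$ of Tagiuri--Vajda, so I would simply specialize \eqref{eq:tagiuri-vajda}, which the paper already accepts for all integers $r,a,b$. Choosing the parameters so that the left side produces $F_{m+1}F_{k+1}$ and $F_{m+k+1}$ is the natural route. Take $r=1$, $a=m$, $b=k$ in \eqref{eq:tagiuri-vajda}. This gives
\[
F_{m+1}F_{k+1}-F_1F_{m+k+1}=(-1)^1F_mF_k .
\]
Since $F_1=1$, rearranging gives exactly $F_{m+k+1}=F_{m+1}F_{k+1}+F_mF_k$. This holds for all $m,k\in\mathbb Z$ because \eqref{eq:tagiuri-vajda} holds for all integers.

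The main point to double-check is that the extension $F_{-m}=(-1)^{m+1}F_m$ from Theorem~\ref{thm:record-probabilities} is the same convention under which \eqref{eq:tagiuri-vajda} is valid. It is: this is the unique extension satisfying $F_{t+1}=F_t+F_{t-1}$ for all $t\in\mathbb Z$, and Vajda's identity is proved for that extension.

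If one prefers not to rely on negative indices in the cited identity, I would instead prove \eqref{eq:fib-identity} directly. Fix $k$ and induct on $m$ in both directions. For the base cases, $m=0$ gives $F_{k+1}=F_1F_{k+1}+F_0F_k$, and $m=-1$ gives $F_k=F_0F_{k+1}+F_{-1}F_k$ with $F_{-1}=1$. Both sides of \eqref{eq:fib-identity} satisfy the Fibonacci recurrence in $m$, so the identity propagates upward and downward from these two base cases. I expect no real obstacle here; the only care needed is the sign convention at negative indices.
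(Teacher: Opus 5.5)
Your proposal is correct and is the paper's proof: it also specializes the Tagiuri--Vajda identity \eqref{eq:tagiuri-vajda} at $(r,a,b)=(1,m,k)$ and uses $F_1=1$. Your backup argument, two-sided induction in $m$ from the base cases $m=0,-1$, is also valid but not needed.
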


\begin{proof}
Apply \eqref{eq:tagiuri-vajda} with $(r,a,b)=(1,m,k)$.
\end{proof}

\begin{proposition}\label{prop:two-expressions}
For all integers $n,j\in\mathbb Z$, one has
\[
F_{2j-3}F_{2n-2j+1} + (F_{2j-1}-F_{2j-3})(F_{2n-2j+1}-F_{2n-2j-1})
=
F_{2n-2}-F_{2j-4}F_{2n-2j-1}.
\]
\end{proposition}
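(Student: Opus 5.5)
Set $a:=j-1$ and $b:=n-j$, so that every index in the identity becomes a linear expression in $a$ and $b$. The left-hand side is
\[
F_{2a-1}F_{2b+1}+(F_{2a+1}-F_{2a-1})(F_{2b+1}-F_{2b-1}),
\]
and the right-hand side is $F_{2a+2b}-F_{2a-2}F_{2b-1}$. By the Fibonacci recurrence, extended to all of $\mathbb Z$ via $F_{-m}=(-1)^{m+1}F_m$, we have $F_{2a+1}-F_{2a-1}=F_{2a}$ and $F_{2b+1}-F_{2b-1}=F_{2b}$. Hence the left-hand side simplifies to
\[
F_{2a-1}F_{2b+1}+F_{2a}F_{2b}.
\]

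Apply Lemma~\ref{lem:fib-identity} with $m=2a$ and $k=2b$. This gives $F_{2a+2b+1}=F_{2a+1}F_{2b+1}+F_{2a}F_{2b}$, so
\[
F_{2a}F_{2b}=F_{2a+2b+1}-F_{2a+1}F_{2b+1}.
\]
Substituting into the left-hand side yields
\[
F_{2a+2b+1}-(F_{2a+1}-F_{2a-1})F_{2b+1}=F_{2a+2b+1}-F_{2a}F_{2b+1}.
\]

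It now suffices to show
\[
F_{2a+2b+1}-F_{2a}F_{2b+1}=F_{2a+2b}-F_{2a-2}F_{2b-1}.
\]
Moving terms across, this is equivalent to
\[
F_{2a+2b-1}=F_{2a}F_{2b+1}-F_{2a-2}F_{2b-1}.
\]
Apply Lemma~\ref{lem:fib-identity} twice:
\begin{itemize}
\item with $(m,k)=(2a-1,2b-1)$, giving $F_{2a+2b-1}=F_{2a}F_{2b}+F_{2a-1}F_{2b-1}$;
\item with $(m,k)=(2a,2b-1)$, giving $F_{2a+2b}=F_{2a+1}F_{2b}+F_{2a}F_{2b-1}$, though the relation actually needed is the first.
\end{itemize}
After using the first expansion, the target becomes
\[
F_{2a}F_{2b}+F_{2a-1}F_{2b-1}=F_{2a}F_{2b+1}-F_{2a-2}F_{2b-1}.
\]
Rewrite the right side using $F_{2b+1}=F_{2b}+F_{2b-1}$: it equals $F_{2a}F_{2b}+(F_{2a}-F_{2a-2})F_{2b-1}$. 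Since $F_{2a}-F_{2a-2}=F_{2a-1}$, this is $F_{2a}F_{2b}+F_{2a-1}F_{2b-1}$, which matches the left side and closes the argument.

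All recurrence steps hold on $\mathbb Z$ because the sign-extended sequence still satisfies $F_{m+1}=F_m+F_{m-1}$ for every integer $m$. Lemma~\ref{lem:fib-identity} is stated for all integers, so no case split on the signs of $a$ or $b$ is needed.

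The main obstacle is bookkeeping: at each stage the correct index pair must be fed into Lemma~\ref{lem:fib-identity}. If a step misfires, the fallback is Binet's formula, checking that both sides agree as Laurent polynomials in $\phi^{a}$ and $\phi^{b}$, which is routine.
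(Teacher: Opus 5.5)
Your proof is correct and follows essentially the same route as the paper. Both reduce the left side to $F_{2j-3}F_{2n-2j+1}+F_{2j-2}F_{2n-2j}$ and finish with Lemma~\ref{lem:fib-identity} at the same two index pairs, $(2j-2,\,2n-2j)$ and $(2j-3,\,2n-2j-1)$, together with the Fibonacci recurrence; only the order of the rearrangements differs, and your second bulleted application $(m,k)=(2a,2b-1)$ is unused and can be dropped.
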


\begin{proof}
Using $F_{2j-1}-F_{2j-3}=F_{2j-2}$ and
$F_{2n-2j+1}-F_{2n-2j-1}=F_{2n-2j}$, the left-hand side simplifies to
\[
F_{2j-3}F_{2n-2j+1}+F_{2j-2}F_{2n-2j}.
\]
Since $F_{2n-2j}=F_{2n-2j+1}-F_{2n-2j-1}$, this becomes
\[
(F_{2j-3}+F_{2j-2})F_{2n-2j+1}-F_{2j-2}F_{2n-2j-1}
=
F_{2j-1}F_{2n-2j+1}-F_{2j-2}F_{2n-2j-1}.
\]
Write $F_{2j-2}=F_{2j-3}+F_{2j-4}$ to obtain
\[
F_{2j-1}F_{2n-2j+1}-F_{2j-2}F_{2n-2j-1}
=
\left(F_{2j-1}F_{2n-2j+1}-F_{2j-3}F_{2n-2j-1}\right)
-
F_{2j-4}F_{2n-2j-1}.
\]
It remains to show
\begin{equation}\label{eq:intermediateFibIdent}
F_{2j-1}F_{2n-2j+1}-F_{2j-3}F_{2n-2j-1}=F_{2n-2}.
\end{equation}
Apply Lemma~\ref{lem:fib-identity} with $(m,k)=(2j-2,\,2n-2j)$ to get
\[
F_{2n-1}=F_{2j-1}F_{2n-2j+1}+F_{2j-2}F_{2n-2j},
\]
and with $(m,k)=(2j-3,\,2n-2j-1)$ to get
\[
F_{2n-3}=F_{2j-2}F_{2n-2j}+F_{2j-3}F_{2n-2j-1}.
\]
Subtracting yields
\[
F_{2n-1}-F_{2n-3}=F_{2j-1}F_{2n-2j+1}-F_{2j-3}F_{2n-2j-1}.
\]
Since $F_{2n-1}-F_{2n-3}=F_{2n-2}$, \eqref{eq:intermediateFibIdent} follows, proving the proposition.
\end{proof}

Combining~\eqref{eq:Nnj-fib} and Proposition~\ref{prop:two-expressions}
yields the main result of this subsection.

\begin{theorem}\label{thm:Nnj-recurrence}
For all integers $n\ge 1$ and all $1\le j\le n$,
\[
N(n,j) = F_{2n-2} - F_{2j-4}F_{2(n-j)-1},
\]
where $F_{-n} = (-1)^{n+1}F_n$.
\end{theorem}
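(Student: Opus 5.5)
The theorem follows by combining Corollary~\ref{cor:Nnj-fib} with Proposition~\ref{prop:two-expressions}. Since the corollary is stated without a proof above, I would first justify it from Lemma~\ref{lem:bool-rec-recurrence}. Then I would specialize the Fibonacci identity to the relevant range $1\le j\le n$, where the extended convention $F_{-m}=(-1)^{m+1}F_m$ matters at the boundary.

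\emph{Step 1: counting via the factorization.} By Lemma~\ref{lem:bool-rec-recurrence}, $N(n,j)=|\mathcal C_{1,j}|+|\mathcal C_{2,j}|$, with
\[
|\mathcal C_{1,j}|=|\mathcal B_{[j-1]}|\,|\mathcal B_{[j,n]}|,\qquad
|\mathcal C_{2,j}|=\bigl(|\mathcal B_{[j]}|-|\mathcal B_{[j-1]}|\bigr)\bigl(|\mathcal B_{[j,n]}|-|\mathcal B_{[j+1,n]}|\bigr).
\]
An interval $[a,b]$ of size $m=b-a+1$ gives $\mathcal B_{[a,b]}\cong\mathcal B_m$ by shifting indices. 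Hence \eqref{eq:bool-cardinality} gives $|\mathcal B_{[a,b]}|=F_{2m-1}$. Substituting $m=j-1,\,n-j+1,\,j,\,n-j$ yields exactly \eqref{eq:Nnj-fib}.

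\emph{Step 2: degenerate sizes.} The only subtle point is the boundary cases, where some interval is empty or a singleton. For $j=1$, $\mathcal B_{[0]}$ should be read as $\mathcal B_\varnothing=\{\mathrm{id}\}$, so it has size $1=F_{-1}$. For $j=n$, $\mathcal B_{[n+1,n]}=\{\mathrm{id}\}$, again of size $1=F_{-1}$. A singleton has $|\mathcal B_1|=1=F_1$. With the extension $F_{-1}=1$, formula $F_{2m-1}$ therefore holds for all $m\ge0$.

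I would also check that Lemma~\ref{lem:bool-rec-recurrence} remains valid at $j=1$ and $j=n$. At $j=1$, $\mathsf{Tag}_0=0$, so $\mathcal C_{2,1}=\varnothing$; this matches the factor $F_1-F_{-1}=0$. At $j=n$, $\mathsf{Tag}_n=0\ne S$, so $\mathcal C_{2,n}=\varnothing$; this matches $F_1-F_{-1}=0$ in the second factor. As a sanity check, the count at $j=1$ is $F_{-1}F_{2n-1}=F_{2n-1}$, the full set, as it should be.

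\emph{Step 3: algebra.} Proposition~\ref{prop:two-expressions} converts the right side of \eqref{eq:Nnj-fib} into $F_{2n-2}-F_{2j-4}F_{2n-2j-1}$ for all integers $n,j$, using only the recurrence and Lemma~\ref{lem:fib-identity}, both valid over $\mathbb Z$ under the stated extension. This proves the theorem. I expect the main obstacle to be the boundary bookkeeping of Step 2 rather than the algebra. For a quick check, $n=j=1$ gives $F_0-F_{-2}F_{-1}=0-(-1)(1)=1$, which is correct.
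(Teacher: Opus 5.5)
Your proposal is correct and follows the paper's own route: the paper proves the theorem by combining \eqref{eq:Nnj-fib}, which comes from the factorization in Lemma~\ref{lem:bool-rec-recurrence} together with $|\mathcal B_m|=F_{2m-1}$, with the identity of Proposition~\ref{prop:two-expressions}. Your Steps 1 and 2 spell out two things the paper leaves implicit: the derivation of Corollary~\ref{cor:Nnj-fib}, and the boundary conventions $|\mathcal B_\varnothing|=1=F_{-1}$ and $\mathcal C_{2,j}=\varnothing$ at $j=1$ and $j=n$.
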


\subsubsection{Probability of records}

In this subsection, we record a closed form for the count $N(n,j)$ and for the probability $\Prob{\rec_j(w)=1}$ when
$w\sim\Unif{\mathcal{B}_n}$, both written in terms of the golden ratio $\phi$.  We then prove two exponential comparison bounds for these
probabilities under the changes $(n,j)\mapsto(n+1,j)$ and $(n,j)\mapsto(n+1,j+1)$.  We use Binet's formula
\[
F_n = \frac{\phi^n - (-\phi)^{-n}}{\sqrt{5}},
\qquad
\phi = \frac{1+\sqrt{5}}{2},
\]
together with $\phi-\phi^{-1}=1$.

Using this expression, we record the following closed form for $N(n,j)$, obtained from Theorem~\ref{thm:Nnj-recurrence} by substituting
Binet's formula and simplifying.
\begin{equation}\label{eq:Nnj-exact}
N(n,j) = \frac{2}{5}\phi^{2n-2} + \frac{2}{5}\phi^{2-2n} + \frac{1}{5}\phi^{2n-4j+3} - \frac{1}{5}\phi^{4j-2n-3}.
\end{equation}
Since $|\mathcal{B}_n|=F_{2n-1}=\frac{\phi^{2n-1}+\phi^{1-2n}}{\sqrt5}$, we also have for $w\sim\Unif{\mathcal{B}_n}$,
\begin{equation}\label{eq:bool-rec-prob-phi}
\Prob{\rec_j(w)=1}
=
\frac{N(n,j)}{F_{2n-1}}
=
\frac{2\phi^{2n-2} + 2\phi^{2-2n} + \phi^{2n-4j+3} - \phi^{4j-2n-3}}{\sqrt{5}\,(\phi^{2n-1} + \phi^{1-2n})}.
\end{equation}

\subsection{The Expectation of Forward Stability of Boolean Permutations}\label{sec:boolean-fs-expectation}

We now prove that, for $u,v\iidsim\Unif{\mathcal{B}_n}$, $\E{\FS(u,v)}=n+O(1)$.
The proof is organized around a finite-state encoding of a uniform Boolean permutation by its presence process and associated tag word.
From this encoding we recover the local record indicators, and hence the increments $\Delta_i(u,v)$ of the forward-stability walk, while losing only a bounded amount of lag when passing mixing estimates through the successive derived processes.
We begin by proving geometric strong mixing for the underlying finite-state process and then transfer those bounds to the increment process.
Next we establish a uniform drift gap for the increments and combine it with the mixing estimates to obtain a block Bernstein tail bound for partial sums of the centered increments.
We then apply this concentration estimate to the forward-stability walk and deduce that its maximum has expectation $n+O(1)$.

\subsubsection{Geometric strong mixing for the increment process}\label{subsec:boolean-mixing}

We give a finite-state (transfer-matrix) description of the block-tag word associated with a uniform random Boolean permutation. This implies geometric
(Rosenblatt) strong mixing, also called $\alpha$-mixing, for the record-indicator process $(\chi_i)_{i=1}^n$, and hence for the increment
process $\Delta_i(u,v)$. We use the $\sigma$-algebra and $\alpha$-mixing framework from Section~\ref{sec:preliminaries}
(Definitions~\ref{def:rosenblatt-alpha} and the finite-sequence extension procedure described there).

The following lemma will be used to pass mixing bounds from a simple underlying process to the derived processes $(\chi_i)$ and $(\Delta_i)$.

\begin{lemma}\label{lem:alpha-factor}
\begin{enumerate}[label=(\roman*)]
\item Let $(V_i)_{i=0}^{N}$ be a finite sequence of random variables, and let $(Y_i)_{i=0}^{N}$ be a coordinatewise function of $V$,
meaning that there exists a function $g$ such that
\[
Y_i=g(V_i)\qquad(0\le i\le N).
\]
Then for all $h\ge 1$,
\[
\alpha_Y(h)\le \alpha_V(h).
\]

\item Let $(A_i)_{i=0}^{N}$ and $(B_i)_{i=0}^{N}$ be independent finite sequences of random variables, and set
\[
W_i:=(A_i,B_i)\qquad(0\le i\le N).
\]
Then for all $h\ge 1$,
\[
\alpha_W(h)\le \alpha_A(h)+\alpha_B(h).
\]

\item Let $(X_i)_{i=0}^{N}$ be a finite sequence of random variables, and let $(\varepsilon_i)_{i=0}^{N}$ be an independent sequence of random variables,
independent of the entire sequence $(X_i)_{i=0}^{N}$. Set
\[
U_i:=(X_i,\varepsilon_i).
\]
Then for all $h\ge 1$,
\[
\alpha_U(h)=\alpha_X(h).
\]
\end{enumerate}
\end{lemma}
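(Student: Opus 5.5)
All three parts reduce to comparing the $\sigma$-algebras generated by the derived processes with those generated by the underlying ones, using Definition~\ref{def:rosenblatt-alpha} for the extended bi-infinite sequences. Write $\mathcal M_k^Z$ and $\mathcal G_{k+h}^Z$ for the past and future $\sigma$-algebras of a process $Z$. The extension by a constant $x_\star$ commutes with each construction, since $g(x_\star)$, $(x_\star,x_\star)$ and $(x_\star,\varepsilon_\star)$ are again constants. So it suffices to argue for a fixed $k$ and $h$ and then take suprema.

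For (i), membership in an atom of $\sigma(Y_i:i\le k)$ is determined by the values $(V_i)_{i\le k}$, because $Y_i=g(V_i)$. Hence $\mathcal M_k^Y\subseteq\mathcal M_k^V$ and $\mathcal G_{k+h}^Y\subseteq\mathcal G_{k+h}^V$. The supremum defining $\alpha_Y(h)$ therefore ranges over a subset of the pairs $(A,B)$ used for $\alpha_V(h)$, which gives $\alpha_Y(h)\le\alpha_V(h)$.

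For (ii), fix $k,h$. Let $\mathcal A_1=\mathcal M_k^A$, $\mathcal A_2=\mathcal G_{k+h}^A$, $\mathcal B_1=\mathcal M_k^B$ and $\mathcal B_2=\mathcal G_{k+h}^B$, so that $\mathcal M_k^W=\mathcal A_1\vee\mathcal B_1$ and $\mathcal G_{k+h}^W=\mathcal A_2\vee\mathcal B_2$. Since the space is finite, I will work with indicators. For $E\in\mathcal M_k^W$ and $F\in\mathcal G_{k+h}^W$, write $\mathbf 1_E=\sum_{a,b}\mathbf 1_{\{a\}\times\{b\}}$ over atoms $a$ of $\mathcal A_1$ and $b$ of $\mathcal B_1$. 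By independence of $A$ and $B$,
\[
\Prob{E\cap F}=\sum_{a,b,a',b'}\Prob{a\cap a'}\Prob{b\cap b'},
\]
where $a'$ and $b'$ run over the atoms of $\mathcal A_2$ and $\mathcal B_2$ making up $F$. Similarly $\Prob{E}\Prob{F}=\sum\Prob{a}\Prob{a'}\Prob{b}\Prob{b'}$.

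I will interpolate through the hybrid $\sum\Prob{a}\Prob{a'}\Prob{b\cap b'}$. The first difference is $\sum_{b,b'}\Prob{b\cap b'}\bigl(\sum_{a,a'}[\Prob{a\cap a'}-\Prob a\Prob{a'}]\bigr)$, where for fixed $(b,b')$ the inner sum runs over the section sets $E_b\in\mathcal A_1$ and $F_{b'}\in\mathcal A_2$. Each inner sum is therefore bounded in absolute value by $\alpha_A(h)$, and the weights $\Prob{b\cap b'}$ sum to at most $1$. The second difference is treated the same way with the roles of $A$ and $B$ swapped, using the product weights $\Prob a\Prob{a'}$. Adding the two bounds gives $\alpha_A(h)+\alpha_B(h)$.

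The main obstacle is this bookkeeping in (ii): one must check that the section sets $E_b$ and $F_{b'}$ really lie in the correct $\sigma$-algebras. This holds because $E$ is a union of rectangles $a\times b$, so each section is a union of atoms $a$. One must also check that the weights sum to at most $1$, which is true because the relevant sums are of the form $\sum_{b\subseteq E\text{-sections},\,b'}\le\sum_{\text{all }b,b'}\Prob{b\cap b'}=1$.

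For (iii), two inequalities are needed. The bound $\alpha_X(h)\le\alpha_U(h)$ follows from (i), since $X_i$ is the first coordinate of $U_i$. For $\alpha_U(h)\le\alpha_X(h)$, apply (ii) with $B=\varepsilon$. Because the $\varepsilon_i$ are mutually independent, the past and future of $\varepsilon$ are independent, so $\alpha_\varepsilon(h)=0$. Hence $\alpha_U(h)\le\alpha_X(h)+0$, and equality follows.
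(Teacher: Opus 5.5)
Your proposal is correct. Parts (i) and (iii) match the paper's proof: (i) uses the inclusion of $\sigma$-algebras after constant extension, and (iii) uses (ii) with $\alpha_\varepsilon\equiv 0$ plus (i) for the reverse inequality.

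The difference is in (ii). The paper invokes Bradley's Theorem 5.1(a), namely $\alpha(\mathcal A_1\vee\mathcal B_1,\mathcal A_2\vee\mathcal B_2)\le\alpha(\mathcal A_1,\mathcal A_2)+\alpha(\mathcal B_1,\mathcal B_2)$ whenever $\mathcal A_1\vee\mathcal A_2$ and $\mathcal B_1\vee\mathcal B_2$ are independent. You instead prove this inequality directly, and your hybrid argument goes through:
\begin{itemize}
\item With $E$ and $F$ written as disjoint unions of atoms $a\cap b$ and $a'\cap b'$, independence gives $\Prob{E\cap F}=\sum\Prob{a\cap a'}\Prob{b\cap b'}$ and $\Prob{E}\Prob{F}=\sum\Prob{a}\Prob{b}\Prob{a'}\Prob{b'}$.
\item Regrouping the first difference by $(b,b')$ gives $\sum_{b,b'}\Prob{b\cap b'}\bigl(\Prob{E_b\cap F_{b'}}-\Prob{E_b}\Prob{F_{b'}}\bigr)$. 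Here $E_b\in\mathcal M_k^A$, $F_{b'}\in\mathcal G_{k+h}^A$, and the weights sum to exactly $1$.
\item The second difference regroups the same way, with weights $\Prob{a}\Prob{a'}$ that also sum to $1$.
\end{itemize}

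The citation buys brevity and covers arbitrary probability spaces and countably many independent pieces. Your version is self-contained and needs only finiteness of the space, so that atoms exist, which is all the paper uses.

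One cosmetic point: the notation $\{a\}\times\{b\}$ presupposes a product space. On the given space, the atoms of $\mathcal M_k^W$ are the intersections $a\cap b$. Empty ones satisfy $\Prob{a}\Prob{b}=0$ by independence, so including or omitting them changes nothing. Alternatively, you may pass to the product of the ranges, since every quantity involved depends only on the joint law of $(A,B)$.
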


\begin{proof}
For part (i), let $(\widetilde V_i)_{i\in\mathbb Z}$ and $(\widetilde Y_i)_{i\in\mathbb Z}$ be the constant bi-infinite extensions of
$(V_i)_{i=0}^{N}$ and $(Y_i)_{i=0}^{N}$. Since $Y_i=g(V_i)$ for all $i$, after constant extension we have
\[
\sigma(\widetilde Y_i:\ i\le k)\subseteq\sigma(\widetilde V_i:\ i\le k),
\qquad
\sigma(\widetilde Y_i:\ i\ge k+h)\subseteq\sigma(\widetilde V_i:\ i\ge k+h)
\]
for every $k$ and $h\ge 1$. By the definition of $\alpha$, this implies
\[
\alpha_{\widetilde Y}(h)\le \alpha_{\widetilde V}(h)
\]
for all $h\ge 1$, that is, $\alpha_Y(h)\le \alpha_V(h)$.

For part (ii), let $(\widetilde A_i)_{i\in\mathbb Z}$, $(\widetilde B_i)_{i\in\mathbb Z}$, and $(\widetilde W_i)_{i\in\mathbb Z}$ be the constant
bi-infinite extensions of $(A_i)_{i=0}^{N}$, $(B_i)_{i=0}^{N}$, and $(W_i)_{i=0}^{N}$, respectively. Fix $k\in\mathbb Z$ and $h\ge 1$, and set
\[
\mathcal{M}_k^A:=\sigma(\widetilde A_i:\ i\le k),
\qquad
\mathcal{G}_{k+h}^A:=\sigma(\widetilde A_i:\ i\ge k+h),
\]
\[
\mathcal{M}_k^B:=\sigma(\widetilde B_i:\ i\le k),
\qquad
\mathcal{G}_{k+h}^B:=\sigma(\widetilde B_i:\ i\ge k+h).
\]
Since $\widetilde W_i=(\widetilde A_i,\widetilde B_i)$, we have
\[
\sigma(\widetilde W_i:\ i\le k)=\mathcal{M}_k^A\vee \mathcal{M}_k^B,
\qquad
\sigma(\widetilde W_i:\ i\ge k+h)=\mathcal{G}_{k+h}^A\vee \mathcal{G}_{k+h}^B,
\]
where $\mathcal{H}_1\vee\mathcal{H}_2$ denotes the smallest $\sigma$-algebra containing both $\mathcal{H}_1$ and $\mathcal{H}_2$.
Because the entire sequence $(A_i)_{i=0}^{N}$ is independent of the entire sequence $(B_i)_{i=0}^{N}$, the bi-infinite extensions
$(\widetilde A_i)_{i\in\mathbb Z}$ and $(\widetilde B_i)_{i\in\mathbb Z}$ are independent, and therefore the $\sigma$-fields
\[
\mathcal{M}_k^A\vee \mathcal{G}_{k+h}^A
\qquad\text{and}\qquad
\mathcal{M}_k^B\vee \mathcal{G}_{k+h}^B
\]
are independent. Hence Bradley~\cite[Theorem~5.1(a)]{Bradley05} gives
\[
\alpha\!\left(\mathcal{M}_k^A\vee \mathcal{M}_k^B,\ \mathcal{G}_{k+h}^A\vee \mathcal{G}_{k+h}^B\right)
\le
\alpha\!\left(\mathcal{M}_k^A,\mathcal{G}_{k+h}^A\right)
+
\alpha\!\left(\mathcal{M}_k^B,\mathcal{G}_{k+h}^B\right).
\]
Taking the supremum over $k$ yields
\[
\alpha_W(h)\le \alpha_A(h)+\alpha_B(h)
\]
for all $h\ge 1$.

For part (iii), part (ii) applied with $A_i:=X_i$ and $B_i:=\varepsilon_i$ gives
\[
\alpha_U(h)\le \alpha_X(h)+\alpha_\varepsilon(h).
\]
Since $(\varepsilon_i)_{i=0}^{N}$ is an independent sequence, we have $\alpha_\varepsilon(h)=0$ for all $h\ge 1$, and therefore
\[
\alpha_U(h)\le \alpha_X(h).
\]
On the other hand, $X_i$ is a coordinatewise function of $U_i$, namely $X_i=\pi_1(U_i)$ where $\pi_1$ is projection onto the first coordinate.
Therefore part (i), applied with $V_i:=U_i$ and $Y_i:=X_i$, gives
\[
\alpha_X(h)\le \alpha_U(h).
\]
Combining the two inequalities yields $\alpha_U(h)=\alpha_X(h)$ for all $h\ge 1$.
\end{proof}

\subsubsection{A 2-state transfer matrix and uniform contraction}
By Lemma~\ref{lem:block-boundary-encoding}, the map $w\mapsto \mathsf{Tag}(w)$ is a bijection from
$\mathcal{B}_n$ onto $\mathcal{T}_{n-1}$. Thus, if $w\sim\Unif{\mathcal{B}_n}$, then $\mathsf{Tag}(w)$ is uniform on $\mathcal{T}_{n-1}$.

For $0 \le t \le m$, write $T|_{[t]} := (T_1, \dots, T_t)$ for the prefix of $T$ of length~$t$
with $T|_{[0]} := \varnothing$.

We record the number of ways a fixed prefix can be extended to the right.

\begin{definition}\label{def:extension-counts}
Fix $t\ge 1$, $m\ge 0$, and $P\in\mathcal{T}_t$. Define the extension set
\[
\Ext_m(P):=\{\,T\in\mathcal{T}_{t+m}: T|_{[t]}=P\,\}.
\]
By Definition~\ref{def:block-tag-word}, given a prefix $P = (P_1, \dots, P_t)$, we want to count how many valid ways to append $m$ more symbols to get a block-tag word of length $t+m$. we set
\[
U_m(0):=|\Ext_m(P)| \quad(P_t=0),
\qquad
U_m(1):=|\Ext_m(P)| \quad(P_t\in\{S,C\}).
\]
\end{definition}

\begin{lemma}\label{lem:transfer-matrix}
For every $m\ge 0$,
\[
\begin{pmatrix} U_{m+1}(0) \\ U_{m+1}(1) \end{pmatrix}
=
\begin{pmatrix} 1 & 1 \\ 1 & 2 \end{pmatrix}
\begin{pmatrix} U_m(0) \\ U_m(1) \end{pmatrix},
\qquad
U_0(0)=U_0(1)=1.
\]
Consequently, $U_m(0)=F_{2m+1}$ and $U_m(1)=F_{2m+2}$ for all $m\ge 0$.
\end{lemma}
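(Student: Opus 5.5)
The plan is to obtain the recursion by conditioning on the first appended symbol, and then to identify the solution with Fibonacci numbers by induction on $m$.

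\emph{Step 1: the extension counts are well defined.} First observe that $|\Ext_m(P)|$ depends on $P$ only through whether $P_t=0$ or $P_t\in\{S,C\}$. Definition~\ref{def:block-tag-word} constrains a word only through the rule that each symbol $T_r=C$ requires $T_{r-1}\in\{S,C\}$. Condition (i) concerns only $T_1$, which lies inside the fixed prefix since $t\ge 1$. So any extension $P\,Q_1\cdots Q_m$ lies in $\mathcal{T}_{t+m}$ exactly when two things hold. The symbol $Q_1$ must be compatible with $P_t$, and each $Q_r$ must be compatible with $Q_{r-1}$. This justifies the notation $U_m(0)$ and $U_m(1)$. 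The base case $m=0$ is immediate: $\Ext_0(P)=\{P\}$, so $U_0(0)=U_0(1)=1$.

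\emph{Step 2: the recursion.} For $m\ge 0$, split $\Ext_{m+1}(P)$ according to the first appended symbol $Q_1$.
\begin{itemize}
\item If $P_t=0$, then $Q_1\in\{0,S\}$ is forced, since $C$ is forbidden after $0$. The choice $Q_1=0$ leaves a prefix ending in $0$ and contributes $U_m(0)$ completions. The choice $Q_1=S$ leaves a prefix ending in $\{S,C\}$ and contributes $U_m(1)$ completions. This gives $U_{m+1}(0)=U_m(0)+U_m(1)$.
\item If $P_t\in\{S,C\}$, then all three symbols $0,S,C$ are allowed. This gives $U_{m+1}(1)=U_m(0)+2U_m(1)$.
\end{itemize}
Together these are exactly the stated matrix identity.

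\emph{Step 3: the Fibonacci closed form.} Induct on $m$. The base case holds because $F_1=F_2=1$. For the inductive step, assume $U_m(0)=F_{2m+1}$ and $U_m(1)=F_{2m+2}$. Then
\[
U_{m+1}(0)=F_{2m+1}+F_{2m+2}=F_{2m+3},
\]
and
\[
U_{m+1}(1)=F_{2m+1}+2F_{2m+2}=F_{2m+3}+F_{2m+2}=F_{2m+4}.
\]
Both follow from the Fibonacci recurrence alone, which completes the induction.

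The only point needing care is the well-definedness in Step~1, namely the locality of the constraint. Once that is noted, the rest is routine.
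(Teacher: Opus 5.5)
Your proof is correct and follows essentially the same route as the paper: locality of the block-tag constraint makes $U_m(0)$ and $U_m(1)$ well defined, conditioning on the first appended symbol gives the two recurrences, and the closed form follows from $F_1=F_2=1$ together with $F_{2m+3}=F_{2m+1}+F_{2m+2}$ and $F_{2m+4}=F_{2m+1}+2F_{2m+2}$. Your remark that condition (i) lies entirely inside the fixed prefix because $t\ge 1$ is a small clarification that the paper leaves implicit.
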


\begin{proof}
Fix $t\ge 1$, $m\ge 0$, and $P\in\mathcal{T}_t$.
A word $T\in\Ext_m(P)$ is determined uniquely by its suffix $(T_{t+1},\dots,T_{t+m})$, and the condition that
$T=(P_1,\dots,P_t,T_{t+1},\dots,T_{t+m})$ lie in $\mathcal{T}_{t+m}$. Since the defining condition for block-tag words is local, involving
only adjacent positions, the allowable choices for the suffix depend on the prefix $P$ only through its final symbol $P_t$, and in fact only
through the two cases $P_t=0$ and $P_t\in\{S,C\}$. This proves the first assertion, so $U_m(0)$ and $U_m(1)$ are well defined.

The case $m=0$ is immediate: for any prefix $P$, there is exactly one extension of length $0$, namely $P$ itself. Hence
$U_0(0)=U_0(1)=1$.

Now fix $m\ge 0$.

If $P_t=0$, then an extension in $\Ext_{m+1}(P)$ is obtained by choosing the next symbol $T_{t+1}$. Since the symbol immediately to the left
is $0$, the next symbol may be either $0$ or $S$. If $T_{t+1}=0$, the remaining $m$ positions can be filled in $U_m(0)$ ways. If
$T_{t+1}=S$, the remaining $m$ positions can be filled in $U_m(1)$ ways. Therefore
\[
U_{m+1}(0)=U_m(0)+U_m(1).
\]

If instead $P_t\in\{S,C\}$, then the next symbol may be $0$, $S$, or $C$. Choosing $0$ leaves $U_m(0)$ possibilities for the remaining $m$
positions, while choosing either $S$ or $C$ leaves $U_m(1)$ possibilities. Hence
\[
U_{m+1}(1)=U_m(0)+2U_m(1).
\]

These two recurrences are exactly the displayed matrix formula.

Finally, the explicit formulas follow from the initial values $U_0(0)=1=F_1$ and $U_0(1)=1=F_2$, together with the identities
$F_{2m+3}=F_{2m+1}+F_{2m+2}$ and $F_{2m+4}=F_{2m+1}+2F_{2m+2}$.
\end{proof}

Let $T=(T_1,\dots,T_{n-1})\sim \Unif{\mathcal{T}_{n-1}}$. For $1\le t\le n-1$, define
\[
\mathsf{Pres}_t(T):=\mathbf{1}\{T_t\in\{S,C\}\},
\]
and set $\mathsf{Pres}_0(T):=0$.

\begin{lemma}\label{lem:state-markov}
The process $(\mathsf{Pres}_t(T))_{t=0}^{n-1}$ is a time-inhomogeneous Markov chain on $\{0,1\}$:
for each $0\le t\le n-2$ and every $y_0,\dots,y_t\in\{0,1\}$,
\[
\Prob{\mathsf{Pres}_{t+1}(T)=b \mid \mathsf{Pres}_0(T)=y_0,\dots,\mathsf{Pres}_t(T)=y_t}
= K_t(y_t,b),
\]
where the transition kernel $K_t$ on $\{0,1\}$ has entries
\[
K_t(0,0)=\frac{U_{n-t-2}(0)}{U_{n-t-2}(0)+U_{n-t-2}(1)},
\qquad
K_t(0,1)=\frac{U_{n-t-2}(1)}{U_{n-t-2}(0)+U_{n-t-2}(1)},
\]
\[
K_t(1,0)=\frac{U_{n-t-2}(0)}{U_{n-t-2}(0)+2U_{n-t-2}(1)},
\qquad
K_t(1,1)=\frac{2U_{n-t-2}(1)}{U_{n-t-2}(0)+2U_{n-t-2}(1)}.
\]

Moreover, conditional on the full presence path $(\mathsf{Pres}_t(T))_{t=0}^{n-1}$, the symbols $T_t$ are determined at every position except
those $t$ with $\mathsf{Pres}_{t-1}(T)=\mathsf{Pres}_t(T)=1$. At each such position, the symbol $T_t$ is chosen from $\{S,C\}$, and these
choices are independent and each uniform on $\{S,C\}$.
\end{lemma}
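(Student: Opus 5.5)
The argument is a direct counting computation on the uniform measure over $\mathcal{T}_{n-1}$. The tool is the extension counts of Definition~\ref{def:extension-counts} together with Lemma~\ref{lem:transfer-matrix}.

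\textbf{Step 1: reduce to counting extensions.} Fix $0\le t\le n-2$ and a presence pattern $y_0,\dots,y_t$ with positive probability. By uniformity, the conditional probability of $\{\mathsf{Pres}_{t+1}=b\}$ equals
\[
\frac{\#\{T\in\mathcal{T}_{n-1}:\ \mathsf{Pres}_s(T)=y_s\ (s\le t),\ \mathsf{Pres}_{t+1}(T)=b\}}{\#\{T\in\mathcal{T}_{n-1}:\ \mathsf{Pres}_s(T)=y_s\ (s\le t)\}}.
\]
We decompose both numerator and denominator according to the prefix $P=T|_{[t]}\in\mathcal{T}_t$ compatible with $y_1,\dots,y_t$. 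For $t=0$ we use the empty prefix with conventional last state $0$, which matches condition (i) of Definition~\ref{def:block-tag-word}, since $T_1\ne C$.

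\textbf{Step 2: evaluate the counts.} For a fixed prefix $P$, the number of completions is $|\Ext_{n-1-t}(P)|=U_{n-1-t}(y_t)$, by Lemma~\ref{lem:transfer-matrix}; this depends on $P$ only through $y_t$. Now split on the next symbol, exactly as in the proof of that lemma.
\begin{itemize}
\item If $y_t=0$, the next symbol is $0$ (giving $U_{n-t-2}(0)$ completions) or $S$ (giving $U_{n-t-2}(1)$ completions).
\item If $y_t=1$, the next symbol is $0$ (giving $U_{n-t-2}(0)$) or one of $S,C$ (giving $2U_{n-t-2}(1)$).
\end{itemize}
Summing over compatible prefixes multiplies numerator and denominator by the same factor, namely the number of compatible prefixes. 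That factor cancels, leaving exactly $K_t(y_t,b)$. Since the result depends on the history only through $y_t$, the Markov property follows.

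\textbf{Step 3: the symbol refinement.} Condition on the full presence path $y_0,\dots,y_{n-1}$. At a position with $y_t=0$ the symbol is forced to be $0$. At a position with $y_{t-1}=0$ and $y_t=1$ it is forced to be $S$, since $C$ would require a present predecessor. At a position with $y_{t-1}=y_t=1$, both $S$ and $C$ are legal, and the constraint (ii) of Definition~\ref{def:block-tag-word} depends only on presence, not on the $S/C$ choice. Hence the set of block-tag words with the given presence path is a product set $\prod\{S,C\}$ over those positions. The uniform measure restricted to this product set is a product of uniform measures, which gives independence and uniformity.

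\textbf{Main obstacle.} The key subtle point is the observation that constraint (ii) depends only on whether $T_{r-1}$ is present. This makes the conditional sets product sets, and it is what guarantees that the extension counts depend only on the last presence bit. Everything else is routine bookkeeping, including the $t=0$ boundary.
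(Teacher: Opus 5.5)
Your proposal is correct and follows essentially the same route as the paper. Both arguments count completions via the extension numbers $U_m$, split on the next symbol according to whether the last presence bit is $0$ or $1$, handle $t=0$ through the convention $\mathsf{Pres}_0=0$, and note that the $S/C$ choices at consecutive present positions form a product set. The one difference is that you explicitly sum over all prefixes compatible with the presence history, whereas the paper conditions on the full prefix $T|_{[t]}=P$ and leaves implicit the passage to conditioning on the coarser presence history; your summation makes that step explicit.
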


\begin{proof}
Fix $0\le t\le n-2$. Assume first that $1\le t\le n-2$, and fix a prefix $P=(P_1,\dots,P_t)\in\mathcal{T}_t$. By Definition~\ref{def:block-tag-word}, the number of completions of positions $t+2,\dots,n-1$
depends only on whether $T_{t+1}=0$ or $T_{t+1}\in\{S,C\}$.

Suppose $P_t=0$. Then the next symbol is either $0$ or $S$.
If $T_{t+1}=0$, then the remaining $n-t-2$ positions can be filled in $U_{n-t-2}(0)$ ways.
If $T_{t+1}=S$, then the remaining $n-t-2$ positions can be filled in $U_{n-t-2}(1)$ ways.
Since $T$ is uniform on $\mathcal{T}_{n-1}$, conditioned on $T|_{[t]}=P$ all such completions are equally likely. Therefore,
\[
\Prob{\mathsf{Pres}_{t+1}(T)=x \mid T|_{[t]}=P}
=
\begin{cases}
\dfrac{U_{n-t-2}(0)}{U_{n-t-2}(0)+U_{n-t-2}(1)}, & x=0,\\[3mm]
\dfrac{U_{n-t-2}(1)}{U_{n-t-2}(0)+U_{n-t-2}(1)}, & x=1.
\end{cases}
\]
Suppose that $P_t\in\{S,C\}$. Then the next symbol may be $0$, $S$, or $C$.
If $T_{t+1}=0$, then the remaining $n-t-2$ positions can be filled in $U_{n-t-2}(0)$ ways.
If $T_{t+1}=S$ or $T_{t+1}=C$, then the remaining $n-t-2$ positions can be filled in $U_{n-t-2}(1)$ ways.
Therefore,
\[
\Prob{\mathsf{Pres}_{t+1}(T)=x \mid T|_{[t]}=P}
=
\begin{cases}
\dfrac{U_{n-t-2}(0)}{U_{n-t-2}(0)+2U_{n-t-2}(1)}, & x=0,\\[3mm]
\dfrac{2U_{n-t-2}(1)}{U_{n-t-2}(0)+2U_{n-t-2}(1)}, & x=1.
\end{cases}
\]
In each case, for $1\le t\le n-2$, the conditional law of $\mathsf{Pres}_{t+1}(T)$ depends on the prefix $P$ only through whether
$P_t=0$ or $P_t\in\{S,C\}$, that is, only through the value of $\mathsf{Pres}_t(T)$.

The case $t=0$ is identical to the case $P_t=0$ above, with the role of the final prefix symbol played by the convention
$\mathsf{Pres}_0(T)=0$. Therefore $(\mathsf{Pres}_t(T))_{t=0}^{n-1}$ is a time-inhomogeneous Markov chain, and the displayed formulas for
the transition kernel $K_t$ follow.

For the final statement, condition on the full presence path $(\mathsf{Pres}_t(T))_{t=0}^{n-1}$. If $\mathsf{Pres}_t(T)=0$, then necessarily
$T_t=0$. If $\mathsf{Pres}_{t-1}(T)=0$ and $\mathsf{Pres}_t(T)=1$, then necessarily $T_t=S$, since a symbol $C$ cannot follow $0$.
If $\mathsf{Pres}_{t-1}(T)=1$ and $\mathsf{Pres}_t(T)=1$, then $T_t$ may be either $S$ or $C$. Thus the only remaining freedom is at those
indices $t$ for which $\mathsf{Pres}_{t-1}(T)=\mathsf{Pres}_t(T)=1$, and at each such index there are exactly two choices.

Moreover, changing $T_t$ between $S$ and $C$ at such an index does not affect which symbols may appear in later positions, so the compatible
block-tag words are obtained by making these choices independently over all indices with consecutive $1$'s in the presence path. Since $T$ is
uniform on $\mathcal{T}_{n-1}$, all compatible block-tag words are equally likely. Therefore, conditional on the full presence path, the
symbols at those indices are independent and each uniform on $\{S,C\}$.
\end{proof}

\begin{remark}\label{rem:boolean-sampling}
The block-tag encoding also yields an exact uniform sampler for $\mathcal{B}_n$ running in $O(n)$ time. Indeed, by
Lemma~\ref{lem:block-boundary-encoding}, it suffices to sample a block-tag word $T\in\mathcal{T}_{n-1}$ uniformly. This can be done
sequentially from left to right using the completion counts from Lemma~\ref{lem:transfer-matrix}. If $x_t\in\{0,1\}$ records whether the
current tag is present, and there remain $m$ positions to fill after choosing the next symbol, then:
\begin{itemize}
\item[(1)] if $x_t=0$, the next tag is chosen from $\{0,S\}$ with probabilities proportional to $U_m(0)$ and $U_m(1)$;
\item[(2)] if $x_t=1$, the next tag is chosen from $\{0,S,C\}$ with probabilities proportional to $U_m(0),U_m(1)$, and $U_m(1)$.
\end{itemize}
Since $U_m(0)=F_{2m+1}$ and $U_m(1)=F_{2m+2}$, these probabilities are explicit. Choosing each next symbol with probability proportional to
the number of admissible completions produces a uniform random tag word in $\mathcal{T}_{n-1}$, and hence, via the bijection
$T\mapsto w_T$, a uniform random Boolean permutation in $\mathcal{B}_n$.
\end{remark}

\begin{lemma}\label{lem:dobrushin}
Let $K_t$ be the transition kernel from Lemma~\ref{lem:state-markov}. Then
\[
\delta:=\sup_{0\le t\le n-2}\left\|K_t(0,\cdot)-K_t(1,\cdot)\right\|_{\mathrm{TV}}\le \frac{1}{6}.
\]
Consequently, for every $r\ge 0$ and $h\ge 1$ with $r+h\le n-1$, and every two probability measures $\pi,\pi'$ on $\{0,1\}$,
\[
\left\|\pi K_rK_{r+1}\cdots K_{r+h-1}-\pi'K_rK_{r+1}\cdots K_{r+h-1}\right\|_{\mathrm{TV}}
\le
\delta^h\,\|\pi-\pi'\|_{\mathrm{TV}}.
\]
\end{lemma}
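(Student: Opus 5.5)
The plan is to compute the two rows of $K_t$ explicitly using Lemma~\ref{lem:transfer-matrix}, bound their total variation distance uniformly in $t$, and then use the standard Dobrushin contraction argument for two-state chains.

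Set $m:=n-t-2\ge 0$ and write $a:=U_m(0)=F_{2m+1}$, $b:=U_m(1)=F_{2m+2}$. On the two-point space $\{0,1\}$ the total variation distance between two rows is $|K_t(0,1)-K_t(1,1)|$, so
\[
\left\|K_t(0,\cdot)-K_t(1,\cdot)\right\|_{\mathrm{TV}}
=\frac{2b}{a+2b}-\frac{b}{a+b}
=\frac{ab}{(a+b)(a+2b)}.
\]
Put $x:=b/a=F_{2m+2}/F_{2m+1}\in[1,2)$. The expression becomes $f(x)=x/((1+x)(1+2x))$. The derivative of $f$ has the sign of $1-2x^2$, so $f$ is decreasing on $[1,\infty)$. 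Hence $f(x)\le f(1)=1/6$. The case $x=1$ is attained at $m=0$, that is, $t=n-2$, so the bound $\delta\le 1/6$ is in fact sharp.

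For the contraction, fix $r,h$ and compose the kernels. Any probability measure on $\{0,1\}$ has the form $\pi=(1-p,p)$. For a stochastic $2\times 2$ matrix $K$, we have $(\pi K)(1)=K(0,1)+p\,(K(1,1)-K(0,1))$. Therefore
\[
\|\pi K-\pi' K\|_{\mathrm{TV}}=|p-p'|\,|K(1,1)-K(0,1)|\le \|\pi-\pi'\|_{\mathrm{TV}}\,\|K(0,\cdot)-K(1,\cdot)\|_{\mathrm{TV}}.
\]
This is the elementary two-state Dobrushin inequality, which can be verified directly without citing anything. Applying it successively to $K_r,K_{r+1},\dots,K_{r+h-1}$, each with coefficient at most $\delta$, and inducting on $h$ gives the factor $\delta^h$.

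The only thing needing care is the index range. The condition $r+h\le n-1$ ensures every kernel used has index $t\le n-2$, so $m\ge 0$ and Lemma~\ref{lem:transfer-matrix} applies. Once the monotonicity of $f$ is in hand, there is no real obstacle.
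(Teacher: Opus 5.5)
Your proposal is correct and follows essentially the same route as the paper. You compute the row difference as $\frac{U_m(0)U_m(1)}{(U_m(0)+U_m(1))(U_m(0)+2U_m(1))}$, bound it by $1/6$ using the ratio of completion counts, and then iterate the two-state identity $(\pi-\pi')K_t=\lambda\,(K_t(0,\cdot)-K_t(1,\cdot))$.

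Your version is slightly more careful than the paper's in two respects. You justify the monotonicity step explicitly. You also correctly allow the ratio to equal $1$ at $m=0$, where $U_0(0)=U_0(1)=1$. The paper's claimed strict inequality $0<\rho<1$ misses that case, though the bound still holds there with equality $1/6$.
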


\begin{proof}
Since the state space is $\{0,1\}$, the total variation distance between two probability measures is the absolute
difference of their masses at $0$. Thus, for $0\le t\le n-2$, $\left\|K_t(0,\cdot)-K_t(1,\cdot)\right\|_{\mathrm{TV}}=\left|K_t(0,0)-K_t(1,0)\right|$.
By Lemma~\ref{lem:state-markov},
\begin{align*}
\left|K_t(0,0)-K_t(1,0)\right|
&=
\left|
\frac{U_{n-t-2}(0)}{U_{n-t-2}(0)+U_{n-t-2}(1)}
-
\frac{U_{n-t-2}(0)}{U_{n-t-2}(0)+2U_{n-t-2}(1)}
\right|
\\
&=
\frac{U_{n-t-2}(0)\,U_{n-t-2}(1)}{\left(U_{n-t-2}(0)+U_{n-t-2}(1)\right)\left(U_{n-t-2}(0)+2U_{n-t-2}(1)\right)}.
\end{align*}
Set $\rho:=U_{n-t-2}(0)/U_{n-t-2}(1)$, the above equation gives $\left|K_t(0,0)-K_t(1,0)\right|=\frac{\rho}{(\rho+1)(\rho+2)}$. By Lemma~\ref{lem:transfer-matrix}, we have $0<\rho<1$ since $0 < U_m(0)<U_m(1)$ for every $m\ge 0$. Hence,
\[
  \left\|K_t(0,\cdot)-K_t(1,\cdot)\right\|_{\mathrm{TV}} = \frac{\rho}{(\rho+1)(\rho+2)}\le \frac{1}{6}.
\]
As $t$ was arbitrary, taking the supremum over all $t$ yields $\delta\le 1/6$.

For the contraction estimate, view a probability measure $\mu$ on $\{0,1\}$ as a row vector $\mathbf{p}=(p_0,p_1)$ with
$p_a:=\mu(a)$ for $a\in\{0,1\}$, and write $\mathbf{p}K_t$ for the distribution obtained after one step using the kernel $K_t$, that is,
\[
(\mathbf{p}K_t)_b=\sum_{a\in\{0,1\}}p_a\,K_t(a,b).
\]
Let $e_1:=(1,0)$ and $e_2:=(0,1)$ be the standard basis vectors, corresponding to point masses at $0$ and $1$.
Now fix two probability measures $\pi,\pi'$ on $\{0,1\}$ and set $\lambda:=\pi(0)-\pi'(0)$. Then
$\pi - \pi' = (\pi(0) - \pi'(0), \pi(1) - \pi'(1)) = (\lambda, \pi(1) - \pi'(1))$. Since $\pi(0) + \pi(1) = 1$ and $\pi'(0) + \pi'(1) = 1$, we obtain $\pi(1) - \pi'(1) = -(\pi(0) - \pi'(0)) = -\lambda$.
Thus, $\pi-\pi'=(\lambda,-\lambda)=\lambda(e_1-e_2)$, so by linearity, $(\pi-\pi')K_t=\lambda\left(K_t(0,\cdot)-K_t(1,\cdot)\right)$. Since $\|\pi-\pi'\|_{\mathrm{TV}}=|\pi(0)-\pi'(0)|=|\lambda|$ on $\{0,1\}$, taking total variation norms gives
\[
\left\|\pi K_t-\pi'K_t\right\|_{\mathrm{TV}} = \left\|(\pi-\pi')K_t\right\|_{\mathrm{TV}} \le |\lambda|\,\delta = \delta\,\|\pi-\pi'\|_{\mathrm{TV}}.
\]
Applying this a second time we get
\[
\left\|\pi K_rK_{r+1}-\pi'K_rK_{r+1}\right\|_{\mathrm{TV}}
\le
\delta\,\left\|\pi K_r-\pi'K_r\right\|_{\mathrm{TV}}
\le
\delta^2\,\|\pi-\pi'\|_{\mathrm{TV}},
\]
and iterating in the same way over $h$ kernels yields
\[
\left\|\pi K_rK_{r+1}\cdots K_{r+h-1}-\pi'K_rK_{r+1}\cdots K_{r+h-1}\right\|_{\mathrm{TV}}
\le
\delta^h\,\|\pi-\pi'\|_{\mathrm{TV}}. \qedhere
\]
\end{proof}

\begin{proposition}\label{prop:boolean-strong-mixing}
Fix $n\ge 2$ and let $w\sim\Unif{\mathcal{B}_n}$. Then the finite sequence $\chi(w) := (\chi_i(w))_{i=1}^n$ satisfies
\[
\alpha_{\chi(w)}(h)\le 6^{-h/3},
\]
for every $h\ge 1$. Consequently, if $u,v\stackrel{\mathrm{iid}}{\sim}\Unif{\mathcal{B}_n}$ and $Z_i(u,v):=\Delta_i(u,v)-\E{\Delta_i(u,v)}$, then
\[
\alpha_{Z(u,v)}(h)\ \le\ 6^{-h/6}
\ =\ \exp(-2ch),
\qquad c:=\tfrac1{12}\log 6,
\]
for every $h\ge 1$.
\end{proposition}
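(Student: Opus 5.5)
The plan is to prove geometric mixing for the underlying presence chain $(\mathsf{Pres}_t(T))_{t=0}^{n-1}$ of Lemma~\ref{lem:state-markov}, attach the auxiliary coin flips, and then push the bound through the finite-window maps to $\chi$ and to $Z$ using Lemma~\ref{lem:alpha-factor}. Here $T=\mathsf{Tag}(w)\sim\Unif{\mathcal T_{n-1}}$ by Lemma~\ref{lem:block-boundary-encoding}.

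\emph{Step 1: the Markov chain.} I would show $\alpha_{\mathsf{Pres}}(h)\le \delta^h\le 6^{-h}$.
\begin{itemize}
\item Fix $k$, an event $A\in\sigma(\mathsf{Pres}_i: i\le k)$ and an event $B\in\sigma(\mathsf{Pres}_i:i\ge k+h)$.
\item By the Markov property, $\Prob{B\mid \mathcal M_k}=g(\mathsf{Pres}_k)$, where $g(x)=\sum_y \mathbb P(\mathsf{Pres}_{k+h}=y\mid \mathsf{Pres}_k=x)\,\Prob{B\mid \mathsf{Pres}_{k+h}=y}$.
\item Let $\mu_x:=\delta_x K_k\cdots K_{k+h-1}$ and let $\nu$ be the law of $\mathsf{Pres}_{k+h}$. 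Since $\nu=\pi K_k\cdots K_{k+h-1}$ with $\pi$ the law of $\mathsf{Pres}_k$, Lemma~\ref{lem:dobrushin} gives $\|\mu_x-\nu\|_{\mathrm{TV}}\le\delta^h\|\delta_x-\pi\|_{\mathrm{TV}}\le\delta^h$.
\item Hence $|g(x)-\Prob{B}|\le\delta^h$, and therefore $|\Prob{A\cap B}-\Prob A\Prob B|=|\E{\mathbf 1_A(g(\mathsf{Pres}_k)-\Prob B)}|\le \delta^h$.
\item Boundary indices are handled by the constant extension convention.
\end{itemize}

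\emph{Step 2: adding the coins and passing to $\chi$.} By the last part of Lemma~\ref{lem:state-markov}, I realize $T_t$ as a deterministic function of $(\mathsf{Pres}_{t-1},\mathsf{Pres}_t,\varepsilon_t)$. Here $(\varepsilon_t)$ is an i.i.d.\ uniform $\{S,C\}$ sequence independent of the chain, used only where $\mathsf{Pres}_{t-1}=\mathsf{Pres}_t=1$; this construction gives the correct joint law of $T$.
\begin{itemize}
\item Set $U_t:=(\mathsf{Pres}_t,\varepsilon_t)$. Lemma~\ref{lem:alpha-factor}(iii) gives $\alpha_U=\alpha_{\mathsf{Pres}}$.
\item By Lemma~\ref{lem:chi-as-local}, $\chi_j$ is a function of $(T_{j-1},T_j)$, hence of the window $(U_{j-2},U_{j-1},U_j)$.
\item Consequently $\sigma(\chi_i:i\le k)\subseteq\sigma(U_i:i\le k)$ and $\sigma(\chi_i:i\ge k+h)\subseteq\sigma(U_i:i\ge k+h-2)$.
\item This is the same inclusion argument as in Lemma~\ref{lem:alpha-factor}(i), but with a lag loss of $2$. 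It yields $\alpha_\chi(h)\le 6^{-(h-2)}$ for $h\ge3$.
\item For $h\ge3$ we have $h-2\ge h/3$, which gives $6^{-h/3}$.
\item For $h=1,2$, I use the universal bound $\alpha\le 1/4<6^{-2/3}$.
\end{itemize}
The index shift between $\chi_i$ ($1\le i\le n$) and $\mathsf{Pres}_t$ ($0\le t\le n-1$) only requires care in the bookkeeping; it is harmless under the constant extension convention.

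\emph{Step 3: passing to $Z$.} By \eqref{eq:delta_i}, $Z_i$ is a coordinatewise function of $(\chi_i(u),\chi_i(v))$, where the two sequences are independent. Lemma~\ref{lem:alpha-factor}(ii) and (i) then give $\alpha_Z(h)\le 2\cdot 6^{-h/3}$.
\begin{itemize}
\item We have $2\cdot 6^{-h/3}\le 6^{-h/6}$ exactly when $6^{h/6}\ge2$, i.e.\ $h\ge 6\log2/\log6\approx2.32$. So this covers $h\ge3$.
\item For $h=1,2$, again $\alpha_Z\le1/4\le 6^{-1/3}$.
\end{itemize}

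I expect the main obstacle to be Step 1: turning the Dobrushin contraction of Lemma~\ref{lem:dobrushin} into a bound on the Rosenblatt coefficient for a \emph{time-inhomogeneous} chain. This needs the conditioning argument above, specifically that the conditional law of the future given the past factors through $\mathsf{Pres}_k$. A secondary point needing care is the window-induced lag loss together with the small-$h$ cases, which is exactly where the exponents $h/3$ and $h/6$ come from.
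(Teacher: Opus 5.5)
Your proposal is correct and follows the paper's proof essentially step for step. The shared chain of argument is: Markov-property conditioning plus the Dobrushin contraction of Lemma~\ref{lem:dobrushin} give $\alpha_{\mathsf{Pres}}(h)\le 6^{-h}$; the independent $S/C$ coins are attached via Lemma~\ref{lem:alpha-factor}(iii); passing to $\chi$ loses two units of lag; and Lemma~\ref{lem:alpha-factor}(ii),(i) together with the universal bound $\tfrac14$ handle $Z$. The only differences are cosmetic and all check out: you compare $\delta_xK_k\cdots K_{k+h-1}$ with the law of $\mathsf{Pres}_{k+h}$ rather than $\phi(0)$ with $\phi(1)$, you absorb both lag losses into a single three-site window, and you split the final case analysis at $h=3$ instead of $h=5$.
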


\begin{proof}
Let $T=\mathsf{Tag}(w)\sim\Unif{\mathcal{T}_{n-1}}$. We first prove that the finite sequence $(\mathsf{Pres}_t(T))_{t=0}^{n-1}$ satisfies
\[
\alpha_{\mathsf{Pres}(T)}(h)\le 6^{-h}
\]
for every $h\ge 1$.

Fix $h\ge 1$. Since $(\mathsf{Pres}_t(T))_{t=0}^{n-1}$ is a finite sequence extended by constants (Definition~\ref{def:rosenblatt-alpha}), the past and future $\sigma$-algebras are trivial when the relevant index sets lie entirely outside $\{0,\dots,n-1\}$. Hence it suffices to consider integers $r$ with $0\le r\le n-1-h$. Set
\[
\mathcal{M}_r:=\sigma\left(\mathsf{Pres}_s(T): s\le r\right),
\qquad
\mathcal{G}_{r+h}:=\sigma\left(\mathsf{Pres}_s(T): s\ge r+h\right).
\]
Fix $A\in\mathcal{M}_r$ and $B\in\mathcal{G}_{r+h}$.
By Lemma~\ref{lem:state-markov}, the sequence $(\mathsf{Pres}_t(T))_{t=0}^{n-1}$ is a time-inhomogeneous Markov chain. Hence, by the
Markov property at time $r$, the conditional distribution of the future presence path
\[
(\mathsf{Pres}_{r+h}(T),\mathsf{Pres}_{r+h+1}(T),\dots,\mathsf{Pres}_{n-1}(T))
\]
given $\mathcal{M}_r$ depends only on the value of $\mathsf{Pres}_r(T)$. Therefore there is a function
$\phi:\{0,1\}\to[0,1]$ such that
\[
\E{\mathbf{1}_B\mid \mathcal{M}_r}=\phi\left(\mathsf{Pres}_r(T)\right).
\]
For each $x\in\{0,1\}$, the law of total probability conditioned on $\mathsf{Pres}_{r+h}(T)$ gives
\[
\phi(x)
=
\sum_{y\in\{0,1\}}
\Prob{B\mid \mathsf{Pres}_{r+h}(T)=y,\ \mathsf{Pres}_r(T)=x}\,
\Prob{\mathsf{Pres}_{r+h}(T)=y\mid \mathsf{Pres}_r(T)=x}.
\]
By the Markov property at time $r+h$, we have $\Prob{B\mid \mathsf{Pres}_{r+h}(T)=y,\ \mathsf{Pres}_r(T)=x}=\Prob{B\mid \mathsf{Pres}_{r+h}(T)=y}$, so the law of total probability gives
\[
\phi(x)
=
\sum_{y\in\{0,1\}}
\Prob{B\mid \mathsf{Pres}_{r+h}(T)=y}\,
\Prob{\mathsf{Pres}_{r+h}(T)=y\mid \mathsf{Pres}_r(T)=x}.
\]
Since $\Prob{\mathsf{Pres}_{r+h}(T)=y\mid \mathsf{Pres}_r(T)=x}$ is the $h$-step transition probability, it equals the $(x,y)$-entry of $K_r\cdots K_{r+h-1}$. Writing $\pi:=(1,0)$ and $\pi':=(0,1)$ for the point masses at $0$ and $1$, we have $\Prob{\mathsf{Pres}_{r+h}(T)=y\mid \mathsf{Pres}_r(T)=x}=(\pi_x K_r\cdots K_{r+h-1})(y)$ with $\pi_0=\pi$, $\pi_1=\pi'$. Taking the difference $\phi(0)-\phi(1)$ yields
\[
\phi(0)-\phi(1)
=
\sum_{y\in\{0,1\}}
\Prob{B\mid \mathsf{Pres}_{r+h}(T)=y}
\left(
(\pi K_r\cdots K_{r+h-1})(y)
-
(\pi' K_r\cdots K_{r+h-1})(y)
\right).
\]
Since $\sum_y\bigl((\pi K_r\cdots K_{r+h-1})(y)-(\pi' K_r\cdots K_{r+h-1})(y)\bigr)=0$ and $0\le\Prob{B\mid\mathsf{Pres}_{r+h}(T)=y}\le 1$, the sum $\phi(0)-\phi(1)$ collapses to
\[
\left(
\Prob{B\mid \mathsf{Pres}_{r+h}(T)=0}
-
\Prob{B\mid \mathsf{Pres}_{r+h}(T)=1}
\right)
\left(
(\pi K_r\cdots K_{r+h-1})(0)
-
(\pi' K_r\cdots K_{r+h-1})(0)
\right).
\]
The first factor has absolute value at most \(1\). Therefore
\begin{align*}
|\phi(0)-\phi(1)|
&\le
\left|
(\pi K_r\cdots K_{r+h-1})(0)
-
(\pi' K_r\cdots K_{r+h-1})(0)
\right|
\\&=
\left\|\pi K_r\cdots K_{r+h-1}
-
\pi' K_r\cdots K_{r+h-1}\right\|_{\mathrm{TV}}.
\end{align*}
Applying Lemma~\ref{lem:dobrushin} gives
\[
|\phi(0)-\phi(1)|\le\left\|\pi K_r\cdots K_{r+h-1}
-
\pi' K_r\cdots K_{r+h-1}\right\|_{\mathrm{TV}}\le
6^{-h}\,\|\pi-\pi'\|_{\mathrm{TV}} = 6^{-h}.
\]
Since $A\in\mathcal{M}_r$, the indicator $\mathbf{1}_A$ is $\mathcal{M}_r$-measurable. Hence, by the tower property,
\[
\Prob{A\cap B}
=\E{\mathbf{1}_A\mathbf{1}_B}
=\E{\E{\mathbf{1}_A\mathbf{1}_B\mid\mathcal{M}_r}}
=\E{\mathbf{1}_A\E{\mathbf{1}_B\mid\mathcal{M}_r}}
=\E{\mathbf{1}_A\,\phi(\mathsf{Pres}_r(T))}.
\]
Subtracting $\Prob{A}\Prob{B}=\E{\mathbf{1}_A}\Prob{B}=\E{\mathbf{1}_A\Prob{B}}$ gives
\[
\Prob{A\cap B}-\Prob{A}\Prob{B}
=
\E{\,\mathbf{1}_A\left(\phi(\mathsf{Pres}_r(T))-\Prob{B}\right)\,}.
\]
Since $\Prob{B}=\phi(0)\Prob{\mathsf{Pres}_r(T)=0}+\phi(1)\Prob{\mathsf{Pres}_r(T)=1}$ lies between $\phi(0)$ and $\phi(1)$, we have $|\phi(x)-\Prob{B}|\le|\phi(0)-\phi(1)|$ for $x\in\{0,1\}$, and hence
\[
\left|\Prob{A\cap B}-\Prob{A}\Prob{B}\right|
\le
\Prob{A}\,|\phi(0)-\phi(1)|
\le
|\phi(0)-\phi(1)|
\le
6^{-h}.
\]
Taking suprema over $r$, $A$, and $B$ gives $\alpha_{\mathsf{Pres}(T)}(h)\le 6^{-h}$ for every $h\ge 1$.

Let $\varepsilon_0,\varepsilon_1,\dots,\varepsilon_{n-1}$ be independent $\{S,C\}$-valued random variables, independent of the finite
sequence $(\mathsf{Pres}_t(T))_{t=0}^{n-1}$, with $\varepsilon_0:=S$ deterministic and $\varepsilon_t$ fair for $1\le t\le n-1$.
These variables encode the residual randomness in the tag process after conditioning on the presence path, namely the $S/C$ choices at positions where consecutive presence indicators are both $1$.
By the final statement of Lemma~\ref{lem:state-markov}, after enlarging the probability space if necessary we may and do realize
$T$, $(\mathsf{Pres}_t(T))_{t=0}^{n-1}$, and $(\varepsilon_t)_{t=0}^{n-1}$ on a common finite probability space so that, for each
$1\le t\le n-1$,
\[
T_t=
\begin{cases}
0, & \mathsf{Pres}_t(T)=0,\\
S, & \mathsf{Pres}_{t-1}(T)=0\ \text{and}\ \mathsf{Pres}_t(T)=1,\\
\varepsilon_t, & \mathsf{Pres}_{t-1}(T)=1\ \text{and}\ \mathsf{Pres}_t(T)=1.
\end{cases}
\]
Applying Lemma~\ref{lem:alpha-factor}(iii) to the finite sequences $(\mathsf{Pres}_t(T))_{t=0}^{n-1}$ and $(\varepsilon_t)_{t=0}^{n-1}$ gives
\[
\alpha_{(\mathsf{Pres}(T),\varepsilon)}(h)=\alpha_{\mathsf{Pres}(T)}(h)
\]
for every $h\ge 1$, where $(\mathsf{Pres}(T),\varepsilon)$ denotes the finite sequence
\[
\left((\mathsf{Pres}_t(T),\varepsilon_t)\right)_{t=0}^{n-1},
\]
interpreted via the constant-extension convention.

The displayed formula shows that passing from $\mathsf{Pres}$ to $T$ loses one unit of lag, since $T_t$ is a function of $\mathsf{Pres}_{t-1}(T)$, $\mathsf{Pres}_t(T)$, and $\varepsilon_t$. Hence, for every $r$,
\[
\sigma(T_1,\dots,T_r)\subseteq \sigma\left((\mathsf{Pres}_s(T),\varepsilon_s): s\le r\right),
\quad\;
\sigma(T_{r+h},\dots,T_{n-1})\subseteq \sigma\left((\mathsf{Pres}_s(T),\varepsilon_s): s\ge r+h-1\right).
\]
It follows that
\[
\alpha_T(h)\le \alpha_{(\mathsf{Pres}(T),\varepsilon)}(h-1)\le 6^{-(h-1)}
\]
for every $h\ge 2$, where $\alpha_T(h)$ denotes the $\alpha$-mixing coefficient of the finite sequence $(T_1,\dots,T_{n-1})$.

Now apply Lemma~\ref{lem:chi-as-local}. Passing from $T$ to $\chi(w)$ loses one more unit of lag, since for $j\ge 2$ the variable $\chi_j(w)$ is a function of $(T_{j-1},T_j)$, while $\chi_1(w)=0$ is
constant. Hence
\[
\alpha_{\chi(w)}(h)\le \alpha_T(h-1)\le \alpha_{\mathsf{Pres}(T)}(h-2)\le 6^{-(h-2)}
\]
for every $h\ge 3$. For $h=1,2$ we use the universal bound $\alpha(h)\le \tfrac14$, valid for every process and every $h\ge 1$. Combining these bounds, and noting that
$6^{-(h-2)}\le 6^{-h/3}$ for $h\ge 3$, yields
\[
\alpha_{\chi(w)}(h)\le 6^{-h/3}
\]
for every $h\ge 1$.

Finally, let $u,v\stackrel{\mathrm{iid}}{\sim}\Unif{\mathcal{B}_n}$. Since the sequences $\chi(u)$ and $\chi(v)$ are finite and independent,
Lemma~\ref{lem:alpha-factor}(ii) gives
\[
\alpha_{(\chi(u),\chi(v))}(h)
\le
\alpha_{\chi(u)}(h)+\alpha_{\chi(v)}(h)
=
2\,\alpha_{\chi(w)}(h),
\]
for every $h\ge 1$, where $(\chi(u),\chi(v))$ denotes the finite sequence
\[
\left((\chi_i(u),\chi_i(v))\right)_{i=1}^{n},
\]
interpreted via the constant-extension convention.
Since $Z_i(u,v)$ is a coordinatewise function of the pair $(\chi_i(u),\chi_i(v))$, Lemma~\ref{lem:alpha-factor}(i) gives
\[
\alpha_{Z(u,v)}(h)\le 2\,\alpha_{\chi(w)}(h)\le 2\cdot 6^{-h/3}
\]
for every $h\ge 1$. By the universal bound,
we also have $\alpha_{Z(u,v)}(h)\le \tfrac14$. Therefore
\[
\alpha_{Z(u,v)}(h)\le \min\!\left(\tfrac14,\ 2\cdot 6^{-h/3}\right)\le 6^{-h/6}
\]
for every $h\ge 1$, because $\tfrac14\le 6^{-h/6}$ for $h=1,2,3,4$ and $2\cdot 6^{-h/3}\le 6^{-h/6}$ for $h\ge 5$. This proves the proposition.
\end{proof}

\subsubsection{Block concentration via Merlev\`ede--Peligrad--Rio}\label{subsec:boolean-mpr}

We will apply the Bernstein inequality of Merlev\`ede--Peligrad--Rio \cite{MPR09} to centered block sums of the increment process
$\Delta_i(u,v)=1-\chi_i(u)-\chi_i(v)$.

\begin{lemma}\label{lem:mpr-block}
Let $(X_t)_{t\ge1}$ be a sequence of centered random variables such that $\|X_t\|_\infty\le M$ for all $t$ and whose Rosenblatt strong-mixing
coefficients satisfy
\[
\alpha(h)\le e^{-2ch}
\qquad\text{for some }c>0.
\]
Then there exists $C_3=C_3(c)>0$ such that for all $k\ge 4$ and all $x\ge 0$,
\begin{equation}\label{eq:mpr-block-tail}
\Prob*{\left|\sum_{t=1}^{k} X_t\right|\ge x}
\ \le\
\exp\!\left(
-\frac{C_3\,x^2}{kM^2 + Mx(\log k)(\log\log k)}
\right).
\end{equation}
\end{lemma}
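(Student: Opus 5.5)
The plan is to obtain \eqref{eq:mpr-block-tail} directly from the Bernstein-type inequality of Merlev\`ede--Peligrad--Rio \cite[Theorem~1]{MPR09}, after a scaling reduction and a check that their hypotheses match ours. That theorem says the following. Let $(\xi_t)_{t\ge1}$ be centered real random variables with $\sup_t\|\xi_t\|_\infty\le 1$ whose strong-mixing coefficients satisfy $\alpha(h)\le e^{-2ch}$ for all $h\ge 1$. Then there is a constant $C=C(c)>0$ such that for all $k\ge 4$ and all $y\ge 0$,
\[
\Prob*{\sup_{j\le k}\left|\sum_{t=1}^{j}\xi_t\right|\ge y}\le \exp\!\left(-\frac{C\,y^2}{k+y(\log k)(\log\log k)}\right).
\]
This is the two-sided, maximal form. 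I will use it exactly as stated, so no union-bound factor $2$ appears; that factor is what cannot be absorbed for small $x$. Since $\bigl|\sum_{t=1}^k\xi_t\bigr|\le\sup_{j\le k}\bigl|\sum_{t=1}^j\xi_t\bigr|$, the maximal form immediately gives the tail bound for the single sum.

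\textbf{Step 1 (scaling).} If $M=0$ then every $X_t=0$ almost surely. The left side of \eqref{eq:mpr-block-tail} is then $0$ for $x>0$; for $x=0$ both sides equal $1$, with the convention $0/0=0$ in the exponent. So assume $M>0$ and set $\xi_t:=X_t/M$. Then each $\xi_t$ is centered and $\|\xi_t\|_\infty\le1$. Moreover $\sigma(\xi_i: i\in I)=\sigma(X_i:i\in I)$ for every index set $I$, so $\alpha_\xi(h)=\alpha_X(h)\le e^{-2ch}$. With $y:=x/M$ the right side above becomes
\[
\exp\!\left(-\frac{C\,x^2/M^2}{k+(x/M)(\log k)(\log\log k)}\right)
=\exp\!\left(-\frac{C\,x^2}{kM^2+Mx(\log k)(\log\log k)}\right),
\]
which is \eqref{eq:mpr-block-tail} with $C_3:=C(c)$.

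\textbf{Step 2 (finite sequences).} In our application $(X_t)$ comes from a finite sequence, namely $Z_i(u,v)$ for $1\le i\le n$, with mixing interpreted via the constant-extension convention of Section~\ref{sec:preliminaries}. For $t>n$ I extend by $X_t:=0$. This extension is centered, bounded by $M$, and deterministic, so it generates only trivial $\sigma$-algebras. By the remark after Definition~\ref{def:rosenblatt-alpha}, the mixing coefficients of the extended infinite sequence coincide with those of the finite one, so the hypothesis $\alpha(h)\le e^{-2ch}$ holds for all $h\ge1$, as MPR require. Proposition~\ref{prop:boolean-strong-mixing} supplies exactly this bound, with $c=\tfrac1{12}\log 6$.

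\textbf{Main obstacle.} The delicate point is matching conventions with \cite{MPR09}. Their coefficient must be the Rosenblatt $\alpha$ of Definition~\ref{def:rosenblatt-alpha}, with the supremum over past and future $\sigma$-fields, and not a $\beta$- or $\tau$-coefficient. Their decay rate must be stated as $\exp(-2ch)$ rather than $\exp(-ch)$; if it is the latter, I replace $c$ by $2c$, which only changes $C_3$. Their constant must depend only on $c$ and not on the law of the $X_t$. If the cited theorem were only one-sided, I would apply it to both $(\xi_t)$ and $(-\xi_t)$; however, the resulting factor $2$ cannot be absorbed into $C_3$ uniformly in $x$. So I rely on the maximal two-sided statement, which is how MPR state their Theorem~1. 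This is the step I would verify most carefully against the source.
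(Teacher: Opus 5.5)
Your route is the paper's route. The paper's proof is nothing more than an appeal to \cite[Theorem~1, Eq.~(2.1)]{MPR09}, which already carries a general bound $M$. So your rescaling in Step~1 is harmless but unnecessary, and Step~2 concerns the later application rather than the lemma itself.

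The problem lies exactly at the point you flagged, and the paper shares it. No version of the cited inequality, maximal or not, can give the prefactor-free two-sided bound \eqref{eq:mpr-block-tail} for all $x\ge 0$. Take $X_t$ i.i.d.\ uniform on $\{-M,M\}$ with $M>0$, so that $\alpha(h)=0$ for all $h\ge 1$. For odd $k\ge 5$ one has $\bigl|\sum_{t=1}^k X_t\bigr|\ge M$ surely. So at $x=M$ the left side of \eqref{eq:mpr-block-tail} equals $1$, while the right side equals $\exp\bigl(-C_3/(k+(\log k)(\log\log k))\bigr)<1$ for every $C_3>0$. Passing to $\sup_{j\le k}\bigl|\sum_{t=1}^j X_t\bigr|$ only enlarges the left side; it is already $\ge M$ at $j=1$, for every $k$. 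So a maximal form cannot rescue the statement. You are right that the union-bound factor $2$ cannot be absorbed into $C_3$. But the correct conclusion is that the lemma as written is false for small $x$, not that a sharper source statement avoids the issue.

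What is true is the following. Set $S_k:=\sum_{t=1}^k X_t$. A bound on $\log\E{e^{tS_k}}$ over a range $0\le t<t_0$, followed by Chernoff's bound, gives the one-sided estimate $\Prob{S_k\ge x}\le\exp\bigl(-C x^2/(kM^2+Mx(\log k)(\log\log k))\bigr)$ for all $x\ge 0$. This is the form of input from which Bernstein inequalities of this type are derived. Applying it to $(-X_t)$, which has the same mixing coefficients and sup-norm bound, gives the two-sided estimate with a prefactor $2$. You should state and prove that version. It suffices for the lemma's only use, in the proof of Theorem~\ref{thm:BooleanExpectation}. There only a one-sided tail of $\sum_t Z_t$ at $x=\eta k$ is needed, and a constant factor in front of the exponential does not affect the convergence of $\sum_{k}k\,\Prob{Y_{n-k}\ge Y_n}$.
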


\begin{proof}
This is the uniformly bounded, geometrically strong-mixing specialization of \cite[Theorem~1, Eq.\ (2.1)]{MPR09}. In particular,
Merlev\`ede--Peligrad--Rio state the Bernstein inequality in this general strong-mixing setting without any stationarity assumption.
\end{proof}

\begin{lemma}\label{lem:bool-uniform-drift}
Let $u,v\stackrel{\mathrm{iid}}{\sim}\Unif{\mathcal{B}_n}$. There exists $\eta>0$ such that for all $n\ge 3$ and all $1\le i\le n-1$,
\[
\E{\Delta_i(u,v)}\ge \eta.
\]
\end{lemma}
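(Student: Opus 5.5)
By \eqref{eq:delta_i} and linearity, $\E{\Delta_i(u,v)}=1-2\E{\chi_i(w)}=2\Prob{\rec_i(w)=1}-1$ for $w\sim\Unif{\mathcal B_n}$, since $u$ and $v$ have the same law. The plan is therefore to show that $\Prob{\rec_i(w)=1}\ge \tfrac12+\tfrac{\eta}{2}$ uniformly in $n\ge 3$ and $1\le i\le n-1$. I will do this by bounding the closed form \eqref{eq:bool-rec-prob-phi}, which comes from Theorem~\ref{thm:Nnj-recurrence} via Binet, from below. Heuristically, in the bulk this probability is about $\phi^{-1}-\phi^{-4}/5\approx 0.589$, so there is room to spare. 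The only danger is the edge term near $j=n$, which is why the range $i\le n-1$ matters.

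First, handle the numerator $2\phi^{2n-2}+2\phi^{2-2n}+\phi^{2n-4j+3}-\phi^{4j-2n-3}$.
\begin{itemize}
\item Drop the positive terms $2\phi^{2-2n}$ and $\phi^{2n-4j+3}$.
\item Bound the negative term using $j\le n-1$: $\phi^{4j-2n-3}\le\phi^{2n-7}$.
\end{itemize}
This gives a numerator of at least $\phi^{2n-2}(2-\phi^{-5})$.

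Second, handle the denominator. Write it as $\sqrt5\,\phi^{2n-1}(1+\phi^{2-4n})$ and use $n\ge3$ to get $1+\phi^{2-4n}\le 1+\phi^{-10}$.

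Combining the two bounds,
\[
\Prob{\rec_j(w)=1}\ \ge\ \frac{2\phi^{-1}-\phi^{-6}}{\sqrt5\,(1+\phi^{-10})}.
\]
Numerically, $2\phi^{-1}\approx1.236$ and $\phi^{-6}\approx0.056$, so the right side is about $1.180/(2.236\cdot1.008)\approx0.52>\tfrac12$. One can then take $\eta:=2\cdot\frac{2\phi^{-1}-\phi^{-6}}{\sqrt5(1+\phi^{-10})}-1>0$, roughly $0.04$.

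The main obstacle is keeping this constant rigorously above $\tfrac12$, since the margin is thin. To make it airtight I would verify the inequality $2(2\phi^{-1}-\phi^{-6})>\sqrt5(1+\phi^{-10})$ algebraically. One route is to reduce powers of $\phi$ to the form $a+b\sqrt5$ using $\phi^2=\phi+1$, or equivalently to use Fibonacci identities. As a cross-check on the boundary cases, I would compare directly against Theorem~\ref{thm:Nnj-recurrence}:
\begin{itemize}
\item $j=1$: here $F_{-2}=-1$, so $N(n,1)=F_{2n-2}+F_{2n-3}=F_{2n-1}$ and the probability is $1$.
\item $j=2$: here $F_0=0$, so the probability is $F_{2n-2}/F_{2n-1}\ge \tfrac35$ for $n\ge 3$.
\item $j=n-1$: the probability is $(F_{2n-2}-F_{2n-6})/F_{2n-1}$, which equals $4/5\cdot$-type values for small $n$ (e.g.\ $n=3$ gives $(3-1)/5$ in the Fibonacci convention of the theorem) and tends to about $0.528$.
\end{itemize}
These confirm that the worst case is $j=n-1$ and that the uniform bound holds.
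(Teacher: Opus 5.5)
\textbf{Verdict: your proof is correct, and it takes a genuinely different route from the paper's.}

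\textbf{How the paper argues.} The paper works with integer Fibonacci numbers throughout. It writes $\E{\Delta_i(u,v)}=\bigl(F_{2n-4}-2F_{2i-4}F_{2(n-i)-1}\bigr)/F_{2n-1}$. It then uses the addition formula $F_{2n-6}=F_{2i-4}F_{2(n-i)-1}+F_{2i-5}F_{2(n-i)-2}$, whose second term is nonnegative, to get $\E{\Delta_i(u,v)}\ge F_{2n-7}/F_{2n-1}$. This intermediate bound is sharp, with equality at $i=n-1$, so it pins down the worst case exactly. The paper then gets $\eta=1/27$ from the crude estimate $F_{t+6}\le 27F_t$, which needs $t=2n-7\ge 1$, so $n=3$ is checked separately.

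\textbf{How your argument differs.} You work from the Binet closed form \eqref{eq:bool-rec-prob-phi}. You drop the two positive terms, use $j\le n-1$ on the negative term, and use $n\ge 3$ on the denominator. This treats all $n\ge 3$ uniformly with no case split. It also gives a slightly better constant. The cost is that it depends on the irrational closed form and a numerical check.

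\textbf{The numerical check is not delicate.} You flagged keeping the constant above $\tfrac12$ as the main obstacle, but it closes exactly. Since $2\phi^{-1}-\phi^{-6}=5\sqrt5-10=\tfrac{\sqrt5}{2}\,(1+\phi^{-6})$, your lower bound equals $\frac{1+\phi^{-6}}{2(1+\phi^{-10})}$. You may therefore take $\eta=\frac{\phi^{-6}-\phi^{-10}}{1+\phi^{-10}}\approx 0.047$, and its positivity is just $\phi^{-6}>\phi^{-10}$.

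\textbf{Errors in the side remarks.} Your main argument does not use these, but they are wrong and should be fixed or removed.
\begin{enumerate}
\item The bulk value of $\Prob{\rec_j(w)=1}$ is $\phi^{-1}-\phi^{-4}/\sqrt5=1-1/\sqrt5\approx 0.553$. It is not $\phi^{-1}-\phi^{-4}/5\approx 0.589$.
\item In the $j=n-1$ cross-check at $n=3$, you have $F_{2n-6}=F_0=0$, so the probability is $(F_4-F_0)/F_5=3/5$. Your value $(3-1)/5=2/5$ would actually contradict the lemma at $(n,i)=(3,2)$. It is also inconsistent with your own $j=2$ bullet, since for $n=3$ these are the same case.
\end{enumerate}
The claim that $j=n-1$ is the worst case is true. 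It follows from the closed form, because both $j$-dependent terms in the numerator decrease in $j$. Your three boundary bullets do not establish it, though.
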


\begin{proof}
Let $w\sim\Unif{\mathcal{B}_n}$. Since $\Delta_i(u,v)=1-\chi_i(u)-\chi_i(v)$ and $u,v\iidsim\Unif{\mathcal{B}_n}$, we have
\[
\E{\Delta_i(u,v)}=1-2\,\E{\chi_i(w)}=2\Prob{\rec_i(w)=1}-1.
\]
Using \eqref{eq:bool-cardinality} and Theorem~\ref{thm:Nnj-recurrence}, we obtain $\Prob{\rec_i(w)=1}=\frac{F_{2n-2}-F_{2i-4}F_{2(n-i)-1}}{F_{2n-1}}$,
so
\begin{equation}\label{eq:bool-uniform-drift-expr}
\E{\Delta_i(u,v)}
=
\frac{2F_{2n-2}-F_{2n-1}-2F_{2i-4}F_{2(n-i)-1}}{F_{2n-1}}
=
\frac{F_{2n-4}-2F_{2i-4}F_{2(n-i)-1}}{F_{2n-1}},
\end{equation}
using $2F_{2n-2}-F_{2n-1}=F_{2n-4}$.
By Lemma~\ref{lem:fib-identity} with $(m,k)=\left((2i-4)-1,\ (2(n-i)-1)-1\right)$, we get $F_{2n-6}=F_{2i-4}F_{2(n-i)-1}+F_{2i-5}F_{2(n-i)-2}$.
Since $2i-5$ is odd, the extension rule $F_{-r}=(-1)^{r+1}F_r$ gives $F_{2i-5}\ge 0$, and also $F_{2(n-i)-2}\ge 0$.
Thus, $F_{2n-6}\ge F_{2i-4}F_{2(n-i)-1}$, and
\[
F_{2n-4}-2F_{2i-4}F_{2(n-i)-1}\ge F_{2n-4}-2F_{2n-6}=F_{2n-7}.
\]
Applying the above inequality to \eqref{eq:bool-uniform-drift-expr} gives
\[
\E{\Delta_i(u,v)}\ge \frac{F_{2n-7}}{F_{2n-1}},
\]
for all $n\ge 3$ and $1\le i\le n-1$.
Since $F_{t+2}=F_{t+1}+F_t\le 3F_t$ for all $t\ge 1$, iterating gives $F_{t+6}\le 27F_t$. Setting $t=2n-7\ge 1$ yields $F_{2n-7}/F_{2n-1}\ge 1/27$ for $n\ge 4$. The case $n=3$ gives $F_{-1}/F_5=1/5\ge 1/27$. Hence $\eta:=1/27$ works.
\end{proof}

\begin{lemma}\label{lem:bool-terminal}
Let $n\ge 3$ and let $u,v\stackrel{\mathrm{iid}}{\sim}\Unif{\mathcal{B}_n}$. Then
\[
\max_{1\le i\le n+1}\E{Y_i(u,v)} \;=\; \E{Y_n(u,v)} \;=\; n+1-2\cdot\frac{F_{2n-3}}{F_{2n-1}}.
\]
In particular, $\max_{1\le i\le n+1}\E{Y_i(u,v)}=n+O(1)$.
\end{lemma}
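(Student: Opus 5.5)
We analyze the deterministic sequence $i\mapsto \E{Y_i(u,v)}$ through its increments. Since $Y_{i+1}-Y_i=\Delta_i(u,v)$, we have $\E{Y_{i+1}}-\E{Y_i}=\E{\Delta_i(u,v)}$ for $1\le i\le n$. By Lemma~\ref{lem:bool-uniform-drift}, $\E{\Delta_i(u,v)}\ge \eta>0$ for $1\le i\le n-1$, so $\E{Y_1}<\E{Y_2}<\cdots<\E{Y_n}$. It remains to handle the last step $i=n$. There, $\chi_n(w)=\mathbf{1}\{\mathsf{Tag}_{n-1}(w)\neq 0\}$ by Lemma~\ref{lem:chi-as-local}. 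Also $\Prob{\rec_n(w)=1}=N(n,n)/F_{2n-1}$, which by Theorem~\ref{thm:Nnj-recurrence} equals $(F_{2n-2}-F_{2n-4}F_{-1})/F_{2n-1}=(F_{2n-2}-F_{2n-4})/F_{2n-1}=F_{2n-3}/F_{2n-1}$. This is less than $1/2$ for $n\ge 3$, since $F_{2n-1}=F_{2n-2}+F_{2n-3}>2F_{2n-3}$. Hence $\E{\Delta_n}=2F_{2n-3}/F_{2n-1}-1<0$, so $\E{Y_{n+1}}<\E{Y_n}$, and the maximum over $1\le i\le n+1$ is attained at $i=n$.

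Next we evaluate $\E{Y_n(u,v)}$. By definition $Y_n=\Lambda_n(u)+\Lambda_n(v)+n-1$, and since both permutations lie in $S_n$, $\Lambda_n(w)=\chi_n(w)$. Therefore
\[
\E{Y_n}=n-1+2\bigl(1-\Prob{\rec_n(w)=1}\bigr)=n+1-2\frac{F_{2n-3}}{F_{2n-1}}.
\]
The ratio $F_{2n-3}/F_{2n-1}$ lies in $(0,1/2)$, so $\E{Y_n}=n+O(1)$. As a cross-check, we can compute $\E{Y_{n+1}}=n$ directly and add $\E{\Delta_n}$; this gives the same value.

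The only delicate point is confirming that Lemma~\ref{lem:bool-uniform-drift} covers every index up to $n-1$. The lemma states exactly this range for $n\ge 3$, so that step is immediate. The other point needing care is the sign convention $F_{-1}=1$ used in evaluating $N(n,n)$. We can also verify $N(n,n)=F_{2n-3}$ directly: $\rec_n=1$ means $s_{n-1}$ is absent, so $w\in\mathcal{B}_{n-1}$, and $|\mathcal{B}_{n-1}|=F_{2n-3}$ by \eqref{eq:bool-cardinality}.
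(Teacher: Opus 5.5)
Your proof is correct and follows the paper's argument: strict increase of $\E{Y_i(u,v)}$ for $1\le i\le n$ from Lemma~\ref{lem:bool-uniform-drift}, the value $\Prob{\rec_n(w)=1}=F_{2n-3}/F_{2n-1}$ giving $\E{\Delta_n(u,v)}<0$, and then $\Lambda_n=\chi_n$ to evaluate $\E{Y_n(u,v)}$ (for the probability, the paper uses the bijection $\{w\in\mathcal{B}_n: w(n)=n\}\cong\mathcal{B}_{n-1}$, which you also give, and your $j=n$ case of Theorem~\ref{thm:Nnj-recurrence} with $F_{-1}=1$ agrees). One trivial slip in your optional cross-check: since $Y_{n+1}=Y_n+\Delta_n$, you should subtract $\E{\Delta_n(u,v)}$ from $\E{Y_{n+1}(u,v)}=n$, not add it.
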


\begin{proof}
By Lemma~\ref{lem:bool-uniform-drift}, $\E{\Delta_i(u,v)}=\E{Y_{i+1}(u,v)}-\E{Y_i(u,v)}\ge \eta>0$ for all $1\le i\le n-1$. Hence, the sequence $\E{Y_i(u,v)}$ is strictly increasing for $1\le i\le n$.

It remains to check that $\E{Y_{n+1}(u,v)}<\E{Y_n(u,v)}$, i.e.\ $\E{\Delta_n(u,v)}<0$ for $n\ge 3$.
Let $w\sim\Unif{\mathcal{B}_n}$. Since $u,v$ are i.i.d. and $\chi_n(w)=\mathbf{1}\{\rec_n(w)=0\}$,
\[
\E{\Delta_n(u,v)}=1-2\,\E{\chi_n(w)}=2\Prob{\rec_n(w)=1}-1.
\]
But $\rec_n(w)=1$ if and only if $w(n)=n$, and deleting the fixed last position gives a bijection $\{w\in\mathcal{B}_n:\ w(n)=n\}\ \cong\ \mathcal{B}_{n-1}$,
so $\Prob{\rec_n(w)=1}=|\mathcal{B}_{n-1}|/|\mathcal{B}_n|=F_{2n-3}/F_{2n-1}$.
Therefore for $n\ge 3$,
\[
\E{\Delta_n(u,v)}=2\cdot\frac{F_{2n-3}}{F_{2n-1}}-1<0,
\]
since $F_{2n-1}=F_{2n-2}+F_{2n-3}>2F_{2n-3}$ for $n\ge 3$.
Thus $\E{Y_i(u,v)}$ increases up to $i=n$ and decreases at $i=n+1$, so $\E{Y_i(u,v)}$ attains its unique maximum at $i=n$.

Finally, $\Lambda_n(w)=\chi_n(w)$ and $\Lambda_{n+1}(w)=0$, hence
\[
\E{Y_n(u,v)}=2\E{\Lambda_n(w)}+(n-1)=2\left(1-\Prob{\rec_n(w)=1}\right)+(n-1)=n+1-2\cdot\frac{F_{2n-3}}{F_{2n-1}}. \qedhere
\]
\end{proof}

\begin{lemma}\label{lem:boolean-fs-lower}
Let $u,v\stackrel{\mathrm{iid}}{\sim}\Unif{\mathcal{B}_n}$. Then $\E{\FS(u,v)}\ge n-\frac13$.
\end{lemma}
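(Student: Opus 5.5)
The bound follows from the elementary inequality $\E{\max_i Y_i}\ge \max_i \E{Y_i}$, applied to a single well-chosen index. I will take $i=n$ and use Lemma~\ref{lem:bool-terminal}. The only subtlety is that $\FS(u,v)$ is a maximum over $1\le i\le m+1$ with $m=\max(\FS(u),\FS(v))$, which may be smaller than $n$. So I first need to check that the index $n$ (or something at least as good) is always available.

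\emph{Reduction to index $n$.} If $m=n$, then $n\le m+1$ and $\FS(u,v)\ge Y_n(u,v)$ directly. If $m<n$, then $u$ and $v$ both fix every position $j>m$. Hence $\chi_j(u)=\chi_j(v)=0$ for all $j>m$, and $\Lambda_i(u)=\Lambda_i(v)=0$ for $i\ge m+1$. Thus $Y_{m+1}(u,v)=m$, while $Y_n(u,v)=n-1$ and $m\le n-1$. That comparison goes the wrong direction, so instead I would argue that $\FS(u,v)\ge Y_n(u,v)$ fails by at most a controlled amount, or else handle the boundary directly.

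More simply: when $m<n$ we have $Y_n(u,v)=n-1$, and $\FS(u,v)\ge Y_{m+1}=m$. I would prefer the pathwise bound
\[
\FS(u,v)\ge Y_n(u,v)-(n-1-m)\,\mathbf 1\{m<n\}.
\]
This only costs when $u(n)=v(n)=n$.

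A cleaner route avoids this loss. I would use $\widetilde{\FS}_n$ from \eqref{eq:upper_inequality} for the upper side only. For the lower bound, I would use the identity from the uniform-case proof: on $\{m<n\}$ one has $\widetilde{\FS}_n=\max\{n,\FS(u,v)\}$. That identity holds for any class, since it only uses $Y_i=i-1$ for $i\ge m+2$. So $\FS(u,v)\ge \widetilde{\FS}_n(u,v)-n\,\mathbf 1\{m<n\}$. This could be costly, because $\Prob{m<n}=(F_{2n-3}/F_{2n-1})^2\approx\phi^{-4}$ is not small.

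I therefore do it pathwise. Let $i^\ast=\min(n,m+1)$. If $m\ge n-1$, then $i^\ast=n$ and $\FS\ge Y_n$. If $m\le n-2$, then $\chi_{n-1},\chi_n$ vanish for both permutations, so $Y_n=n-1$. Also $\FS(u,v)\ge Y_{m+1}=m$. This still gives a loss.

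\emph{Expectation bound.} Rather than insisting on $Y_n$, I write
\[
\E{\FS(u,v)}\ge \E{Y_n(u,v)\mathbf 1\{m\ge n-1\}}+\E{m\,\mathbf 1\{m\le n-2\}}.
\]
The first term equals $\E{Y_n}-(n-1)\Prob{m\le n-2}$. Then $\E{\FS}\ge \E{Y_n}-\E{(n-1-m)\mathbf 1\{m\le n-2\}}$.

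Now $n-1-m$ is the length of the common fixed tail minus one. The event that the common fixed tail has length at least $k$ means $u,v\in\mathcal B_{n-k}$, which has probability $(F_{2n-2k-1}/F_{2n-1})^2\approx\phi^{-4k}$. This gives
\[
\E{(n-1-m)^+}\le\sum_{k\ge2}(F_{2n-2k-1}/F_{2n-1})^2 .
\]
Combining with Lemma~\ref{lem:bool-terminal}, $\E{Y_n}=n+1-2F_{2n-3}/F_{2n-1}$, I get
\[
\E{\FS}\ge n+1-2\frac{F_{2n-3}}{F_{2n-1}}-\sum_{k\ge 2}\Bigl(\frac{F_{2n-2k-1}}{F_{2n-1}}\Bigr)^2 .
\]

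\emph{Main obstacle.} The hard part is the numeric check that this is at least $n-\tfrac13$ for all $n$. The ratio $F_{2n-3}/F_{2n-1}\le 2/5$ for $n\ge3$ (it tends to $\phi^{-2}\approx0.382$), giving $1-0.8=0.2$. The tail sum is at most about $\phi^{-8}/(1-\phi^{-4})\approx0.025$. So the bound is at least roughly $n+0.17\ge n-\tfrac13$.

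Small $n$ (that is, $n\le 3$) would be verified by direct enumeration. If the constant is tight for some small $n$, I would compute $\E{\FS}$ exactly there from the block-tag encoding.
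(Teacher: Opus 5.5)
Your final argument is correct, but it takes a different route from the paper's. The paper uses only the index $i=m+1$, where $m=\max(\FS(u),\FS(v))$. Since $\Lambda_{m+1}(u)=\Lambda_{m+1}(v)=0$, this gives $\FS(u,v)\ge m$ pathwise. Then the tail-sum formula, independence, and $F_{k+2}\ge 2F_k$ give
\[
\E{m}=n-\sum_{t=1}^{n-1}p_t^2\ge n-\sum_{t\ge1}4^{-t}=n-\tfrac13,
\qquad p_t:=\frac{F_{2n-2t-1}}{F_{2n-1}}.
\]

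You instead use the index $n$ on $\{m\ge n-1\}$ and the index $m+1$ on $\{m\le n-2\}$. Combined with $\E{Y_n}=n+1-2p_1$, this yields
\[
\E{\FS(u,v)}\ge n+1-2p_1-\sum_{k=2}^{n-1}p_k^2 .
\]
This exceeds $\E{m}$ by exactly $(1-p_1)^2$, which is about $0.38$ asymptotically. It is the probability that $\chi_n(u)=\chi_n(v)=1$, the only case where $Y_n$ beats $m$. So your route buys a genuinely stronger statement, $\E{\FS(u,v)}\ge n+c$ with $c>0$ for $n\ge 3$. The cost is importing the exact value of $\E{Y_n}$ from Lemma~\ref{lem:bool-terminal}; that lemma is stated for $n\ge3$, though the formula holds for $n\ge 2$.

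Your pathwise bound already dominates $m$. On $\{m=n\}$ some $\chi_n=1$, so $Y_n\ge n$; on $\{m=n-1\}$ one has $Y_n=n-1=m$. Hence the numerical ``main obstacle,'' the abandoned false starts, and the small-$n$ enumeration are all unnecessary for the stated constant $-\tfrac13$.

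Two small repairs are needed if you keep your version.
\begin{itemize}
\item Truncate the tail sum at $k\le n-1$. For $k\ge n$ the probability is $0$, while the extended Fibonacci expression $F_{2n-2k-1}=F_{2k-2n+1}$ is positive and growing, so the series over all $k\ge2$ diverges.
\item Your estimate $\phi^{-8}/(1-\phi^{-4})\approx 0.025$ is only asymptotic, not an upper bound; at $n=3$ the tail equals $1/25$. Use $p_k\le 2^{-k}$ instead, which gives $\sum_{k\ge2}p_k^2\le\tfrac1{12}$. Together with $1-2p_1\ge \tfrac15$ for $n\ge3$, this is ample.
\end{itemize}
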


\begin{proof}
Evaluating \eqref{eq:mainFSEq} at $i=1+\max(\FS(u),\FS(v))$ gives $\FS(u,v)\ge \max(\FS(u),\FS(v))$, so $\E{\FS(u,v)}\ge \E{\max(\FS(u),\FS(v))}$.

Let $w\sim\Unif{\mathcal{B}_n}$. For $1\le t\le n-1$, the event $\{\FS(w)\le n-t\}$ means $w$ fixes positions $n-t+1,\dots,n$, giving a bijection with $\mathcal{B}_{n-t}$. By \eqref{eq:bool-cardinality},
\[
\Prob{n-\FS(w)\ge t}=\frac{F_{2n-2t-1}}{F_{2n-1}}.
\]
By the tail-sum formula and independence,
\[
\E{\max(\FS(u),\FS(v))}
=n-\sum_{t=1}^{n-1}\Prob{n-\FS(u)\ge t,\ n-\FS(v)\ge t}
=n-\sum_{t=1}^{n-1}\left(\frac{F_{2n-2t-1}}{F_{2n-1}}\right)^2.
\]
Since $F_{k+2}\ge 2F_k$ for $k\ge 1$, iterating gives $F_{2n-1}\ge 2^t F_{2n-2t-1}$, hence $(F_{2n-2t-1}/F_{2n-1})^2\le 4^{-t}$. Therefore
\[
\E{\max(\FS(u),\FS(v))}
\ge n-\sum_{t\ge 1}4^{-t}
= n-\frac13.
\qedhere
\]
\end{proof}

\theoremBooleanExpectation*

\begin{proof}
For the upper bound, $\max(\FS(u),\FS(v))\le n$ and \eqref{eq:mainFSEq} give $\FS(u,v)\le \max_{1\le i\le n+1}Y_i(u,v)$. By Lemma~\ref{lem:bool-terminal}, $\E{Y_n(u,v)}=n+O(1)$, so it suffices to show $\E*{\max_{1\le i\le n+1}Y_i(u,v)-Y_n(u,v)}=O(1)$. Since $Y_{n+1}=n$, we have
\[
\max_{1\le i\le n+1}Y_i-Y_n
\le
\sum_{k=1}^{n-1}\max\{Y_{n-k}-Y_n,0\}+1,
\]
so it is enough to prove $\sum_{k=1}^{n-1}\E{\max\{Y_{n-k}-Y_n,0\}}=O(1)$.

Fix $k\in\{1,\dots,n-1\}$. Then $Y_n-Y_{n-k}=\sum_{t=n-k}^{n-1}\Delta_t=\sum_{t=n-k}^{n-1}\E{\Delta_t}+\sum_{t=n-k}^{n-1}Z_t$, and Lemma~\ref{lem:bool-uniform-drift} gives $\sum_{t=n-k}^{n-1}\E{\Delta_t}\ge \eta k$. Hence
\[
\{Y_{n-k}\ge Y_n\}\subseteq \left\{\sum_{t=n-k}^{n-1}Z_t\le -\eta k\right\}.
\]

By Proposition~\ref{prop:boolean-strong-mixing}, $(Z_t)$ satisfies $\alpha(h)\le \exp(-2ch)$ for all $h\ge 1$, where $c=\tfrac1{12}\log 6$, and $|Z_t|\le 2$. Applying Lemma~\ref{lem:mpr-block} to $(Z_{n-k},\dots,Z_{n-1})$ with $M=2$ and $x=\eta k$ gives, for $k\ge 4$,
\[
\Prob{Y_{n-k}\ge Y_n}
\le
\exp\!\left(-\frac{C_3\eta^2\,k}{4+2\eta(\log k)(\log\log k)}\right).
\]
Since $(\log k)(\log\log k)\ge (\log 4)(\log\log 4)>0$ for $k\ge 4$, there exists $c'>0$ such that
\[
\Prob{Y_{n-k}\ge Y_n}
\le
\exp\!\left(-c'\,\frac{k}{(\log k)(\log\log k)}\right).
\]

Since $|Y_{n-k}-Y_n|\le k$, we have $\E{\max\{Y_{n-k}-Y_n,0\}}\le k\,\Prob{Y_{n-k}\ge Y_n}$. Absorbing the finitely many cases $k\le 3$ into the constant,
\[
\sum_{k=1}^{n-1}\E{\max\{Y_{n-k}-Y_n,0\}}
\le
O(1)+\sum_{k\ge 4}k\,\exp\!\left(-c'\,\frac{k}{(\log k)(\log\log k)}\right)
=
O(1),
\]
since the series converges. Therefore, $\E{\FS(u,v)}\le \E{Y_n(u,v)}+O(1)=n+O(1)$. Together with the lower bound from Lemma~\ref{lem:boolean-fs-lower}, this proves the theorem.
\end{proof}

\section{Backward Stability}\label{sec:backward-stability}

In this section, we consider the back-stability number $\BS(u,v)$. Recall that this is the largest integer $k$ such that $c_{1^k\times u, 1^k\times v}^w$ for some $w$ with $w(1)\ne 1$. By~\cite[Theorem~1.6]{HardtWallach2024Schubert}, if $u,v\in S_n$,
\[
\mathrm{BS}(u,v) = \FS(w_0 u w_0, w_0 v w_0) - n,
\]
where $w_0$ is the longest element in $S_n$.

\subsection{Distributional Equivalence}

Conjugation by $w_0$ preserves the key permutation classes we study. Let $\varphi(w) = w_0 w w_0$. By \cite[Cor.~2.3.3(ii), Prop.~2.3.4]{BjornerBrenti2005}, the map $\varphi$ preserves length and induces a Coxeter-diagram automorphism. Since $\varphi$ sends reduced expressions to reduced expressions by relabeling letters, it preserves the Boolean property. In other words, if $b \in S_n$ is Boolean, then so is $w_0 b w_0$ (see \cite[Prop.~8.3]{Tenner2007}). Similarly, $w$ is Grassmannian if and only if $w_0 w w_0$ is Grassmannian (see \cite[(1.31)]{Macdonald1991Schubert}). Let $\mathcal{W}\in \{S_n, \Grass_n, \mathcal{B}_n\}$. For $u, v \iidsim \Unif{\mathcal{W}}$, the pair $(w_0 u w_0, w_0 v w_0)$ is distributed identically to $(u, v)$.

\begin{corollary}
For any of the three families $\mathcal{W} \in \{S_n,\Grass_n,\mathcal{B}_n\}$ and $u,v\iidsim\Unif{\mathcal{W}}$, one has
\[
\BS(u,v)+n \;\stackrel{d}{=}\; \FS(u,v).
\]
In particular, $\BS(u,v)+n$ and $\FS(u,v)$ have the same expectation and the same limiting distribution.
\end{corollary}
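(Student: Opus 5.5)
The plan is to push the identity $\BS(u,v)=\FS(w_0uw_0,w_0vw_0)-n$ from \cite[Theorem~1.6]{HardtWallach2024Schubert} through the measure-preserving map $\varphi(w)=w_0ww_0$. Fix $\mathcal W\in\{S_n,\Grass_n,\mathcal B_n\}$ and $u,v\iidsim\Unif{\mathcal W}$. The identity gives, pointwise on the sample space,
\[
\BS(u,v)+n=\FS(\varphi(u),\varphi(v)).
\]
So it suffices to show that $(\varphi(u),\varphi(v))\stackrel{d}{=}(u,v)$. Then $\FS(\varphi(u),\varphi(v))\stackrel{d}{=}\FS(u,v)$, since $\FS$ is a deterministic function of the pair.

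For the distributional claim, note that $\varphi$ is an involution of $S_n$, so it is a bijection. The discussion preceding the corollary shows that $\varphi$ maps $\mathcal W$ into itself in each of the three cases:
\begin{itemize}
\item trivially for $S_n$;
\item for $\Grass_n$ by \cite[(1.31)]{Macdonald1991Schubert};
\item for $\mathcal B_n$ because $\varphi$ relabels reduced words via $s_i\mapsto s_{n-i}$ and so preserves the no-repeated-letter property \cite[Prop.~8.3]{Tenner2007}.
\end{itemize}
An injective self-map of a finite set is a bijection, so $\varphi|_{\mathcal W}$ is a bijection of $\mathcal W$. Hence it pushes $\Unif{\mathcal W}$ forward to $\Unif{\mathcal W}$. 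Applying $\varphi$ coordinatewise to the independent pair $(u,v)$ preserves independence, and each marginal stays uniform. This gives $(\varphi(u),\varphi(v))\stackrel{d}{=}(u,v)$.

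The ``in particular'' clause then follows immediately. Equality in distribution implies equal expectations, so $\E{\BS(u,v)}+n=\E{\FS(u,v)}$. It also implies that any normalized limit statement transfers. For instance, Theorems~\ref{thm:GrassmannianMain}, \ref{thm:BooleanExpectation} and the uniform theorem hold verbatim with $\FS(u,v)$ replaced by $\BS(u,v)+n$.

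The only point needing care is the precise domain of the Hardt--Wallach identity, where $w_0$ is the longest element of the same $S_n$ in which $u,v$ are sampled. If the identity requires $u,v$ to be regarded in $S_n$ with this fixed $n$, the argument applies as is. I would check that the conjugation preserves $\mathcal W$ in that fixed $S_n$, which the cited results guarantee. No step here should present a real obstacle.
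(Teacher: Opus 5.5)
Your proposal is correct and follows the same argument as the paper. You combine the Hardt--Wallach identity $\BS(u,v)+n=\FS(w_0uw_0,w_0vw_0)$ with the fact that conjugation by $w_0$ is a bijection of each of $S_n$, $\Grass_n$, $\mathcal B_n$, so that $(w_0uw_0,w_0vw_0)\stackrel{d}{=}(u,v)$, and you spell out the pushforward-of-uniform and independence details that the paper leaves implicit.
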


As a consequence, all asymptotic results for forward stability in Sections~\ref{sec:uniform-expectation}, \ref{sec:grassmannian}, and \ref{sec:boolean-expectation} apply directly to backward stability after adding the deterministic shift by $n$.

\section{Conjectures on \texorpdfstring{$\E{\FS(u,v)}$}{E(FS(u,v))}}\label{sec:conjectures}

Beyond the three primary families treated in this paper, it is natural to ask for the asymptotic behavior of
$\E{\FS(u,v)}$ for other well-known permutation classes.  Here we record several conjectures suggested by Monte Carlo
experiments. For each class we estimated $\E{\FS(u,v)}$ from random samples.  When an efficient uniform sampler was available we used it.
Otherwise we employed a Metropolis--Hastings algorithm to construct an ergodic Markov chain on the class whose stationary
distribution is uniform.

\begin{definition}[Pattern avoidance]
Let $\pi\in S_k$ and let $w\in S_n$. We say that $w$ \emph{contains} the pattern $\pi$ if there exist indices
$1\le i_1<i_2<\cdots<i_k\le n$ such that the subsequence $(w(i_1),\dots,w(i_k))$ has the same relative order as
$(\pi(1),\dots,\pi(k))$. If no such indices exist, then $w$ \emph{avoids} $\pi$.
For patterns $\pi^{(1)},\dots,\pi^{(r)}$ we write
\[
\Av_n(\pi^{(1)},\dots,\pi^{(r)})\;:=\;\{\,w\in S_n:\ w\text{ avoids each }\pi^{(t)}\,\}.
\]
\end{definition}

\subsection{Permutation classes}

\begin{definition}[CoGrassmannian permutations]
A permutation $w\in S_n$ is \emph{coGrassmannian} if $w^{-1}$ is Grassmannian.
We write $\CoGr_n$ for the set of coGrassmannian permutations in $S_n$.
\end{definition}

\begin{definition}[Fireworks permutations {\cite[Definition~4.28]{PechenikSpeyerWeigandt2022}}]
Let $w\in S_n$. Write $w$ as a concatenation of its maximal consecutive decreasing runs,
\[
w \;=\; R_1\,R_2\,\cdots\,R_r,
\]
so that each $R_a$ is strictly decreasing and each boundary between $R_a$ and $R_{a+1}$ is an ascent.
We say that $w$ is \emph{fireworks} if the initial elements of these decreasing runs are increasing,
\[
R_1(1) \;<\; R_2(1) \;<\; \cdots \;<\; R_r(1).
\]
Let $\mathcal F_n$ denote the set of fireworks permutations in $S_n$.
\end{definition}

\begin{definition}[Smooth permutations]
Let $G/B$ be the complete flag variety of type~$A_{n-1}$ and let $X_w\subseteq G/B$ be the Schubert variety indexed by
$w\in S_n$. We say that $w$ is \emph{smooth} if $X_w$ is smooth.  In type~$A$, Lakshmibai and Sandhya proved that this
geometric condition is equivalent to avoidance of the patterns $3412$ and $4231$ \cite{LakshmibaiSandhya1990}; we write
\[
\Sm_n \;:=\; \Av_n(3412,4231)
\]
for the set of smooth permutations in $S_n$.
\end{definition}

\begin{definition}[Vexillary and covexillary permutations]
A permutation $w\in S_n$ is \emph{vexillary} if it avoids $2143$, and \emph{covexillary} if it avoids $3412$.
We write $\Vex_n:=\Av_n(2143)$ and $\CoVex_n:=\Av_n(3412)$.
Vexillary permutations have Schubert polynomials with determinantal and flagged-Schur descriptions \cite{BilleyJockuschStanley1993}.
Covexillary permutations form a tractable and widely studied class for the \emph{local} geometry of Schubert varieties:
in the covexillary case, important singularity invariants such as multiplicities, Hilbert series, and Kazhdan--Lusztig-type
polynomials admit positive combinatorial formulas via Gröbner methods \cite{LiYong2012Degenerations,LiLiYong2011Drift}.
\end{definition}

\begin{definition}[Maximal Levi-spherical permutations]
Let $G/B$ be the complete flag variety of type~$A_{n-1}$, and let $X_w\subseteq G/B$ be the Schubert variety indexed by
$w\in S_n$.  We say that $w$ is \emph{maximal Levi-spherical} if the maximal parabolic subgroup $P\le G$ that stabilizes
$X_w$ has the property that a Borel subgroup of its Levi factor acts on $X_w$ with a dense orbit.

The second author, along with Gao and Yong, gave a root-theoretic classification which yields the following
combinatorial criterion \cite{GaoHodgesYong2024LeviSpherical}.  Define
\[
I(w)\;:=\;\{\, i\in\{1,\dots,n-1\} : w^{-1}(i)>w^{-1}(i+1)\,\},
\]
let $W_{I(w)}\le S_n$ be the corresponding parabolic subgroup, and let $w_0(I(w))$ be its longest element.  Then $w$ is
maximal Levi-spherical if and only if the permutation $w_0(I(w))\,w$ is Boolean.

Let $\Lev_n$ denote the set of maximal Levi-spherical permutations in $S_n$.
\end{definition}

\subsection{Three asymptotic regimes}

The expected value of the forward stability falls into three regimes that correlate strongly with the abundance of
left-to-right maxima (records) in the underlying class.
\begin{itemize}
\item[(1)] the record-dense regime $\E{\FS(u,v)} = n+o(n)$,
\item[(2)] the intermediate record regime $\E{\FS(u,v)} = c n+o(n)$ for some $1<c<2$,
\item[(3)] the record-sparse regime $\E{\FS(u,v)} = 2n-o(n)$.
\end{itemize}

\begin{conjecture}[Record-dense regime]\label{conj:fs-rdense}
As $n\to\infty$, the following hold.
\begin{enumerate}[label=(\alph*)]
\item (Boolean, refined constant term.) If $u,v\sim \Unif{\mathcal B_n}$, then
\[
\E{\FS(u,v)}\;=\;n+3+o(1).
\]

\item (CoGrassmannian.) If $u,v\sim \Unif{\CoGr_n}$, then
\[
\E{\FS(u,v)}\;=\;n+\frac{1}{\sqrt{\pi}}\sqrt{n}+O(1).
\]

\item ($321$-avoiding.) If $u,v\sim \Unif{\Av_n(321)}$, then there exists a constant $c_{321}\in[0.92,0.94]$ such that
\[
\E{\FS(u,v)}\;=\;n+c_{321}\sqrt{n}+O(1).
\]

\end{enumerate}
\end{conjecture}

\begin{conjecture}[Intermediate record regime]\label{conj:fs-rint}
As $n\to\infty$, the following hold.
\begin{enumerate}[label=(\alph*)]
\item (Smooth.) If $u,v\sim \Unif{\Sm_n}$, then there exists a constant $c_{\mathrm{sm}}\in[1.35,1.40]$ such that
\[
\E{\FS(u,v)} \;=\; c_{\mathrm{sm}}\,n + o(n).
\]

\item (Maximal Levi-spherical.) If $u,v\sim \Unif{\Lev_n}$, then
\[
\E{\FS(u,v)} \;=\; \frac{5}{4}\,n + O(1).
\]
\end{enumerate}
\end{conjecture}

\begin{conjecture}[Record-sparse regime]\label{conj:fs-rsparse}
As $n\to\infty$, the following hold.
\begin{enumerate}[label=(\alph*)]
\item ($231$-avoiding.) If $u,v\sim \Unif{\Av_n(231)}$, then
\[
\E{\FS(u,v)}\;=\;2n-5+o(1).
\]

\item ($132$-avoiding.) If $u,v\sim \Unif{\Av_n(132)}$, then
\[
\E{\FS(u,v)}\;=\;2n-5+o(1).
\]

\item (Fireworks.) If $u,v\sim \Unif{\mathcal F_n}$, then
\[
\E{\FS(u,v)}
\;=\;
2n-\frac{2n}{\log n-\log\log n}
+o\!\left(\frac{n}{\log n}\right).
\]

\item (Vexillary.) If $u,v\sim \Unif{\Vex_n}$, then
\[
\E{\FS(u,v)} \;=\; 2n - O(\sqrt{n}).
\]

\item (Covexillary.) If $u,v\sim \Unif{\CoVex_n}$, then there exists a constant $c_{\mathrm{cov}}\in[6,9]$ such that
\[
\E{\FS(u,v)} \;=\; 2n - c_{\mathrm{cov}} - o(1).
\]
\end{enumerate}
\end{conjecture}

\section{Record equivalence for pattern avoidance classes}\label{sec:record-equivalence}

In the Monte Carlo experiments above, we observed that the distribution of
$\FS(u,v)$ for $u,v \sim \Unif{\Av_n(132)}$ seemed to agree exactly with the
distribution of
$\FS(u,v)$
for $u,v \sim \Unif{\Av_n(231)}$.  This phenomenon is explained by an exact equidistribution
statement for record sets in these two Catalan classes, which we explain now.

\subsection{Record equidistribution for 132- and 231-avoiding permutations}\label{sec:132-vs-231}

The distribution of $\FS(u,v)$ for $u,v \sim \Unif{\Av_n(132)}$ turns out to agree exactly with the corresponding distribution
for $u,v \sim \Unif{\Av_n(231)}$.  This phenomenon is explained by an exact equidistribution
statement for record sets in these two Catalan classes, which we now prove.

Write $\Cat_m =\frac{1}{m+1}\binom{2m}{m}$ for the $m$th Catalan number. For $w\in S_n$, let $\Rec(w) = \{ j\in[n]: \rec_j(w)=1 \}$. Then $\Rec(w)$ always contains $1$, and the largest value of $\Rec(w)$ is the position of $n$ in $w$.

\begin{proposition}\label{prop:132-231-record-set-count}
Fix $n\ge 1$, a pattern $\pi\in\{132,231\}$, and a subset
\[
R=\{r_1<r_2<\cdots<r_k\}\subseteq [n]
\qquad\text{with }r_1=1.
\]
Set $r_{k+1}:=n+1$ and define the gaps $g_t:=r_{t+1}-r_t-1$  for $1 \leq t \leq k$. Then
\begin{equation}\label{eq:record-fiber-catalan-product}
\left|\{\,w\in \Av_n(\pi): \Rec(w)=R\,\}\right|
=
\prod_{t=1}^k \Cat_{g_t}.
\end{equation}
In particular, for every $R$ as above,
\[
\left|\{\,w\in \Av_n(132): \Rec(w)=R\,\}\right|
=
\left|\{\,w\in \Av_n(231): \Rec(w)=R\,\}\right|.
\]
\end{proposition}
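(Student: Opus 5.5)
Fix $R$ and decompose $w$ into segments between consecutive records. For $w\in S_n$ with $\Rec(w)=R$, write $m_t:=w(r_t)$ for the record values, so $m_1<m_2<\cdots<m_k=n$. The $t$-th \emph{gap segment} is $G_t:=w(r_t+1)\cdots w(r_{t+1}-1)$, of length $g_t$. Every entry of $G_t$ is smaller than $m_t$, since the record condition fails there and the running maximum is $m_t$. The plan is to show that, for either pattern, the avoidance condition forces the set of values in each $G_t$ to be uniquely determined by $R$. After that, avoidance reduces to an independent Catalan-many choice of each segment.

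\textbf{Case $\pi=231$.} Take $t<s$, a value $x$ in $G_t$, and a value $y$ occurring at a later position. If $y<x$, then $m_t,x,y$ would not directly be a $231$; the relevant triple is $(x',\, m_s,\, y)$ with $x'$ before $m_s$. So I argue as follows. Any entry $y$ appearing after position $r_s$ with $y<$ some earlier entry $a$ (itself before $r_s$) yields the pattern $a,m_s,y$, which is a $231$ since $m_s>a>y$. Hence every entry after a record $m_s$ exceeds every entry before it. Consequently the values in positions $[r_s,n]$ are exactly the top $n-r_s+1$ values. This pins down the value set of each block $\{m_t\}\cup G_t$ as a consecutive interval, and then $m_t$ is its maximum. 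Inside the block, $w$ restricted to $G_t$ must avoid $231$, and conversely any choice of $231$-avoiding standardized segments gives a $231$-avoider: a $231$ pattern cannot straddle blocks, because the blocks are layered increasingly, and a pattern using $m_t$ as the "3" would need the "1" later in the same block to lie below an earlier block entry, which is impossible since $m_t$ starts the block. The set $\Rec(w)$ equals $R$ automatically, since the gap entries lie below $m_t$. This gives $\prod_t \Cat_{g_t}$.

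\textbf{Case $\pi=132$.} Here I use the reverse structure. If $y$ lies in $G_t$ and $x$ lies in $G_s$ for some $s>t$, then $y<x$ would give the pattern $y,m_s,x$ with $y<x<m_s$, which is a $132$. So every later gap entry is smaller than every earlier gap entry. Also all gap entries are below their own record, and the records increase. A counting argument then determines the values in $G_t$ as a specific interval. The largest gap values go to $G_1$, and they must all be $<m_1$; the records take the remaining values. I would verify that the forced assignment is unique by building greedily from the right: $m_k=n$, and $G_k$ consists of the smallest $g_k$ values not yet used, and so on. Rather than proving this directly, I would count more robustly using the standard decomposition $w=\alpha\, n\,\beta$ for $132$-avoiders, in which every entry of $\alpha$ exceeds every entry of $\beta$, and induct on $k$. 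The records of $w$ are the records of $\alpha$ together with the position of $n$. The segment $\beta$ is an arbitrary $132$-avoider of length $g_k$, giving a factor $\Cat_{g_k}$, and $\alpha$ is a $132$-avoider on its value set with record set $R\setminus\{r_k\}$, so induction gives the remaining product. The analogous decomposition for $231$ is $w=\alpha\, n\,\beta$ with every entry of $\alpha$ below every entry of $\beta$, and it gives the same recursion. This inductive route is cleaner for both patterns, so I would present it as the actual proof, with base case $k=1$ (where $w(1)=n$ and the count is $\Cat_{n-1}$).

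The main obstacle is checking that the record set of $w$ is exactly the record set of $\alpha$ plus $r_k$, with no records inside $\beta$ since those entries follow $n$, and that the decomposition is a bijection onto pairs (avoider $\alpha$ with prescribed record set, arbitrary avoider $\beta$). Both facts follow from the standard Catalan decomposition, so they are routine.
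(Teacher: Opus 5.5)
Your final argument is correct and is essentially the paper's proof. You split $w=\alpha\,n\,\beta$ at the position $r_k$ of $n$, use that $\alpha$ lies entirely below $\beta$ (for $231$) or entirely above it (for $132$), note that $\Rec(w)=\Rec(\alpha)\cup\{r_k\}$, and let $\beta$ range over arbitrary avoiders contributing $\Cat_{g_k}$; the paper obtains exactly the same recursion $\left|\{w\in\Av_n(\pi):\Rec(w)=R\}\right|=\left|\{u\in\Av_{r_k-1}(\pi):\Rec(u)=R\setminus\{r_k\}\}\right|\cdot\Cat_{n-r_k}$. The only difference is that you induct on $k$ with base case $k=1$ (where $w(1)=n$) rather than on $n$, which handles the base case more cleanly, and your exploratory direct analysis of the gap segments is not needed.
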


\begin{proof}
We prove \eqref{eq:record-fiber-catalan-product} by induction on $n$.
The case $n=3$ follows from a straightforward computation.

Now assume $n\ge 4$ and write $R=\{r_1<\cdots<r_k\}$ with $r_1=1$.
Since $n$ is the unique global maximum, the final record position is the position of $n$, so for every
$w$ with $\Rec(w)=R$ we have $w(r_k)=n$.

First suppose $w\in \Av_n(231)$ and $w(r_k)=n$.
If there exist indices $i<r_k<j$ with $w(i)>w(j)$, then $(w(i),w(r_k),w(j))=(w(i),n,w(j))$ is order-isomorphic to $231$,
a contradiction. Hence
\begin{equation}\label{eq:231-split-rev}
w(1),\dots,w(r_k-1)< w(r_k+1),\dots,w(n).
\end{equation}
Since there are $r_k-1$ entries on the left and $n-r_k$ on the right, \eqref{eq:231-split-rev} forces
\[
\{w(1),\dots,w(r_k-1)\}=\{1,2,\dots,r_k-1\},
\qquad
\{w(r_k+1),\dots,w(n)\}=\{r_k,r_k+1,\dots,n-1\}.
\]
Thus the left block $u:=(w(1),\dots,w(r_k-1))\in S_{r_k-1}$ is already a permutation, while the right block is encoded by $v:=\std\left(w(r_k+1),\dots,w(n)\right)\in S_{n-r_k}$.
This construction is reversible: given $u\in S_{r_k-1}$ and $v\in S_{n-r_k}$, there is a unique word on the alphabet
$\{r_k,\dots,n-1\}$ whose standardization is $v$, and concatenating it after $n$ produces a permutation $w\in S_n$
satisfying \eqref{eq:231-split-rev}. Moreover, we claim $w\in \Av_n(231)$ if and only if $u\in \Av_{r_k-1}(231)$ and $v\in \Av_{n-r_k}(231)$.
Indeed, the forward implication is immediate since any pattern occurring in $u$ or in the right block of $w$
also occurs in $w$. For the reverse implication, suppose $u$ and $v$ both avoid $231$.
By \eqref{eq:231-split-rev}, every entry in positions $<r_k$ is smaller than every entry in positions $>r_k$,
and $w(r_k)=n$ is the global maximum. Hence a $231$-pattern in $w$ cannot use indices from both sides of $r_k$.
Therefore any $231$-pattern in $w$ would lie entirely in the left block or entirely in the right block, and thus cannot occur.

Similarly, suppose $w\in \Av_n(132)$ and $w(r_k)=n$.
If there exist indices $i<r_k<j$ with $w(i)<w(j)$, then $(w(i),n,w(j))$ is order-isomorphic to $132$, a contradiction.
Hence
\[
w(1),\dots,w(r_k-1)> w(r_k+1),\dots,w(n),
\]
so the left values are the $(r_k-1)$ largest elements of $\{1,\dots,n-1\}$ and the right values are the $(n-r_k)$ smallest.
In this case we encode both sides by standardization:
\[
u:=\std\left(w(1),\dots,w(r_k-1)\right)\in S_{r_k-1},
\qquad
v:=\std\left(w(r_k+1),\dots,w(n)\right)\in S_{n-r_k}.
\]
Again the construction is reversible, and $w\in \Av_n(132)$ if and only if
$u\in \Av_{r_k-1}(132)$ and $v\in \Av_{n-r_k}(132)$.

Because $n$ is the global maximum, no position strictly after $r_k$ can be a record, so $\Rec(w)\subseteq[r_k]$ and $r_k\in\Rec(w)$.
Moreover, for $j<r_k$, the event that $j$ is a record depends only on the relative order of the prefix
$(w(1),\dots,w(r_k-1))$, hence coincides with the record event at position $j$ for the standardized left block.
Consequently, for $\pi\in\{132,231\}$, $\Rec(w)=R$ if and only if $\Rec(u)=R\setminus\{r_k\}$.
The right factor $v$ contributes no records and is unconstrained by $R$.

Combining the above for each $\pi\in\{132,231\}$ we obtain a bijection
\[
\{\,w\in \Av_n(\pi): \Rec(w)=R\,\}
\longleftrightarrow
\{\,u\in \Av_{r_k-1}(\pi): \Rec(u)=R\setminus\{r_k\}\,\}\times \Av_{n-r_k}(\pi),
\]
and hence
\begin{equation}\label{eq:record-recursion-rev}
\left|\{\,w\in \Av_n(\pi): \Rec(w)=R\,\}\right|
=
\left|\{\,u\in \Av_{r_k-1}(\pi): \Rec(u)=R\setminus\{r_k\}\,\}\right|\cdot |\Av_{n-r_k}(\pi)|.
\end{equation}

It is well-known, going back to Knuth's analysis of one-stack sortable permutations~\cite[Sec.~2.2.1]{KnuthTAOCP1}, that $|\Av_m(231)|=\Cat_m$ for all $m\ge 0$.
Since reversal of one-line notation, that is, the bijection $w\mapsto w w_0$ on $S_m$, sends the pattern $231$ to $132$,
it restricts to a bijection $\Av_m(231)\to \Av_m(132)$, and hence $|\Av_m(132)|=\Cat_m$ as well.
Thus \eqref{eq:record-recursion-rev} becomes
\[
\left|\{\,w\in \Av_n(\pi): \Rec(w)=R\,\}\right|
=
\left|\{\,u\in \Av_{r_k-1}(\pi): \Rec(u)=R\setminus\{r_k\}\,\}\right|\cdot \Cat_{n-r_k}.
\]
Now note that $n-r_k=g_k$, and the gap data for $R\setminus\{r_k\}\subseteq[r_k-1]$
is exactly $(g_1,\dots,g_{k-1})$. Applying the induction hypothesis to the left factor gives
\[
\left|\{\,u\in \Av_{r_k-1}(\pi): \Rec(u)=R\setminus\{r_k\}\,\}\right|
=
\prod_{t=1}^{k-1}\Cat_{g_t},
\]
so multiplying by $\Cat_{g_k}$ yields \eqref{eq:record-fiber-catalan-product}.  This completes the induction.
\end{proof}

\begin{corollary}\label{cor:132-231-record-eqdist}
Fix $n\ge 1$. The record-set statistic $\Rec$ is equidistributed on $\Av_n(132)$ and $\Av_n(231)$, in the sense that for
every subset $R\subseteq[n]$ with $1\in R$,
\[
\left|\{\,w\in \Av_n(132): \Rec(w)=R\,\}\right|
\;=\;
\left|\{\,w\in \Av_n(231): \Rec(w)=R\,\}\right|.
\]
Equivalently, the multisets $\{\!\{\Rec(w):w\in \Av_n(132)\}\!\}$ and $\{\!\{\Rec(w):w\in \Av_n(231)\}\!\}$ coincide.
\end{corollary}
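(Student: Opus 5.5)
This corollary follows immediately from Proposition~\ref{prop:132-231-record-set-count}, so the plan is simply to split into cases according to whether $R$ can occur as a record set at all.

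\emph{Case 1: $1\in R$.} This is exactly the setting of the proposition, with $r_1=1$. For each $\pi\in\{132,231\}$ the fiber size equals $\prod_{t}\Cat_{g_t}$. This product depends only on the gap data of $R$ and not on $\pi$, so the two counts coincide.

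\emph{Case 2: $1\notin R$.} No permutation has such a record set, since position $1$ is always a record ($d_1(w)=0$). Both fibers are therefore empty, which gives the stated equality over all subsets of $[n]$.

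For the multiset formulation, I would note that the multiset $\{\!\{\Rec(w):w\in\Av_n(\pi)\}\!\}$ is determined by its multiplicity function $R\mapsto|\{w\in\Av_n(\pi):\Rec(w)=R\}|$. The two cases above show that these multiplicity functions agree for $\pi=132$ and $\pi=231$, so the multisets coincide.

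The only point that needs care is the range of $n$: the proposition's induction begins at $n=3$. Small $n$ are covered either by a direct check, or because the proposition is stated for all $n\ge1$ and may be assumed. Beyond this, I expect no real obstacle, since the argument is purely bookkeeping on top of the proposition.
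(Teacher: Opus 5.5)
Your proposal is correct and follows the paper: the corollary is read off directly from the ``in particular'' clause of Proposition~\ref{prop:132-231-record-set-count}, since the fiber count $\prod_t \Cat_{g_t}$ depends only on $R$ and not on $\pi$. Your extra case $1\notin R$ is not needed for the displayed equality, but it is a sensible addition for the multiset formulation, because both fibers are then empty.
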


\begin{corollary}\label{cor:132-231-record-eqdist-prob}
Let $w_{132}\sim \Unif{\Av_n(132)}$ and $w_{231}\sim \Unif{\Av_n(231)}$. Then for every $R\subseteq[n]$ with $1\in R$,
\[
\Prob{\Rec(w_{132})=R}=\Prob{\Rec(w_{231})=R}.
\]
Equivalently, the record indicator vectors $(\rec_1(w),\dots,\rec_n(w))$ (or $(\chi_1(w),\dots,\chi_n(w))$) have the same
distribution under $w=w_{132}$ and under $w=w_{231}$.
\end{corollary}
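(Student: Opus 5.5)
This is a direct consequence of Corollary~\ref{cor:132-231-record-eqdist}, so the plan is to pass from counts to probabilities and then from record sets to record indicator vectors.

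First, recall that $|\Av_n(132)|=|\Av_n(231)|=\Cat_n$, as noted in the proof of Proposition~\ref{prop:132-231-record-set-count}. For any $R\subseteq[n]$ with $1\in R$, divide both sides of the equality in Corollary~\ref{cor:132-231-record-eqdist} by this common cardinality. Since $w_{132}$ and $w_{231}$ are uniform on their classes, this gives
\[
\Prob{\Rec(w_{132})=R}=\frac{|\{w\in\Av_n(132):\Rec(w)=R\}|}{\Cat_n}=\frac{|\{w\in\Av_n(231):\Rec(w)=R\}|}{\Cat_n}=\Prob{\Rec(w_{231})=R}.
\]
If $1\notin R$, both probabilities vanish, because position $1$ is always a record. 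So the laws of $\Rec$ agree on all subsets of $[n]$.

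For the equivalent formulation, the map $R\mapsto(\mathbf 1\{j\in R\})_{j=1}^n$ is a bijection from subsets of $[n]$ to $\{0,1\}^n$. It sends $\Rec(w)$ to $(\rec_1(w),\dots,\rec_n(w))$, so the record indicator vectors are equidistributed. Applying the coordinatewise map $x\mapsto 1-x$ then gives equality in law of $(\chi_1(w),\dots,\chi_n(w))$. Conversely, the record indicator vector determines $\Rec(w)$, which gives the reverse implication.

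There is no real obstacle. The only point needing care is confirming that both classes have the same size $\Cat_n$, so that equal counts become equal probabilities. This was already established via Knuth and the reversal bijection.
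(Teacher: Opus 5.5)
Your proposal is correct and matches the paper's proof, which divides the count identity of Corollary~\ref{cor:132-231-record-eqdist} by the common cardinality $|\Av_n(132)|=|\Av_n(231)|=\Cat_n$. Your extra remarks (the vanishing case $1\notin R$, and passing from record sets to the vectors $(\rec_j(w))_j$ and $(\chi_j(w))_j$) make explicit the ``equivalently'' clause that the paper leaves implicit.
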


\begin{proof}
Divide the identity of Corollary~\ref{cor:132-231-record-eqdist} by
$|\Av_n(132)|=|\Av_n(231)|=\Cat_n$.
\end{proof}

\begin{corollary}\label{cor:132-231-fs-eqdist}
Let $(u_{132},v_{132})$ be independent with $u_{132},v_{132}\iidsim \Unif{\Av_n(132)}$, and let
$(u_{231},v_{231})$ be independent with $u_{231},v_{231}\iidsim \Unif{\Av_n(231)}$.
Then
\[
\FS(u_{132},v_{132})\ \stackrel{d}{=}\ \FS(u_{231},v_{231}).
\]
In particular, for every integer $t$,
\[
\Prob{\FS(u_{132},v_{132})=t}=\Prob{\FS(u_{231},v_{231})=t},
\qquad\text{and}\qquad
\E{\FS(u_{132},v_{132})} =\E{\FS(u_{231},v_{231})}.
\]
\end{corollary}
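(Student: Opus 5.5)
The key observation is that $\FS(u,v)$ is a deterministic function of the record indicator vectors of $u$ and $v$ alone. I will first check this, then push the equality of joint laws from Corollary~\ref{cor:132-231-record-eqdist-prob} through that function.

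\emph{Step 1: $\FS(u,v)$ depends only on records.} By \eqref{eq:mainFSEq},
\[
\FS(u,v)=\max_{1\le i\le 1+\max(\FS(u),\FS(v))}\bigl(\Lambda_i(u)+\Lambda_i(v)+i-1\bigr).
\]
Each $\Lambda_i(w)=\sum_{j\ge i}\chi_j(w)$ is a function of $(\chi_1(w),\dots,\chi_n(w))$. The upper limit $\max(\FS(u),\FS(v))$ looks like it needs more data, so I will remove that dependence by replacing it with $n$. By \eqref{eq:upper_inequality}, the maximum over $1\le i\le n+1$ is at least $\FS(u,v)$. For indices $i$ with $m+1<i\le n+1$, where $m=\max(\FS(u),\FS(v))$, we have $\Lambda_i(u)=\Lambda_i(v)=0$, so $Y_i=i-1\le n$. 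Also $Y_{m+1}=m$, and $\FS(u,v)\ge \max(\FS(u),\FS(v))$ as shown in the proof of Lemma~\ref{lem:boolean-fs-lower}.

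This leaves the boundary case $m<n$, where the extra terms could reach $n$. To avoid that, I will instead use a cleaner route. The relevant quantity $\FS(w)$ is the last position moved by $w$, and it need not be a record-only statistic. However, for $i>m+1$ all $\chi_i$ vanish. So I can take the maximum over all $i\ge 1$ of $Y_i$ restricted to $i\le L+1$, where $L$ is the largest index with $\chi_L(u)+\chi_L(v)>0$ (and $L=1$ if there is none). I need to verify that this quantity equals $\FS(u,v)$, and this is the one delicate point. The walk $Y_i$ strictly increases by $1$ on $(L,m+1]$, because there $\chi_i(u)=\chi_i(v)=0$. Hence the maximum over $i\le m+1$ is attained at $i=m+1$, with value $m$, whenever $m+1>L$. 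So the issue reduces to checking that $m$ is record-determined on these classes.

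For $\pi\in\{132,231\}$ this holds directly. By the proof of Proposition~\ref{prop:132-231-record-set-count}, after the last record $r_k$ (the position of $n$) the entries are non-records. If $r_k<n$, then $\chi_n=1$ and so $L=n=m$. If $r_k=n$, then $m$ is at most $n$ and is computable from the prefix; here I expect to argue via the general identity $\FS(u,v)=\max_{1\le i\le n+1}Y_i$ whenever that maximum exceeds $n$, and otherwise both sides equal $\max(n,\ldots)$ after checking small cases. This is where I expect the main obstacle. The first thing I will try is to prove that \eqref{eq:mainFSEq} holds with the upper limit replaced by $n+1$ for all $u,v\in S_n$. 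This is plausible because the Hardt--Wallach max formula is insensitive to the embedding $S_n\hookrightarrow S_{n+1}$.

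\emph{Step 2: transfer of distributions.} Once $\FS(u,v)=G(\chi(u),\chi(v))$ for a fixed function $G$, independence of $u$ and $v$ gives a joint law of $(\chi(u),\chi(v))$ that is the product of the marginals. By Corollary~\ref{cor:132-231-record-eqdist-prob}, these marginals agree for $132$ and $231$. Hence the product laws agree, and therefore so do the pushforwards under $G$. Equality of the point probabilities and of the expectations then follows immediately.
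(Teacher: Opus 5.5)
There is a genuine gap in Step 1, at exactly the point you flagged. You claim that $\FS(w)$ ``need not be a record-only statistic.'' In fact it always is, for every $w\in S_{\mathbb Z_+}$:
$\FS(w)=\max\left(\{1\}\cup\{j:\chi_j(w)=1\}\right)$.
\begin{itemize}
\item If $m=\FS(w)\ge 2$, then $w$ permutes $[m]$ and fixes every $j>m$. So for $j>m$ the entries $w(1),\dots,w(j-1)$ are exactly $1,\dots,j-1$, and $j$ is a record.
\item Minimality of $m$ gives $w(m)\neq m$. Since $w(m)\in[m]$, this forces $w(m)<m$, so the value $m$ sits at an earlier position and $\chi_m(w)=1$.
\item If $m=1$, then $w=\mathrm{id}$ and every $\chi_j(w)=0$.
\end{itemize}
This is the missing idea. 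It shows that your $L$ always equals $\max(\FS(u),\FS(v))$. Hence the cutoff $1+\max(\FS(u),\FS(v))$ in \eqref{eq:mainFSEq} is a function of $(\chi(u),\chi(v))$ for arbitrary permutations. You do not need the $132$/$231$ structure or the case split on $r_k$; your treatment of the case $r_k=n$ is unfinished in any event.

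Your fallback plan fails, because \eqref{eq:mainFSEq} does not stay true when the upper limit is replaced by $n+1$. Take $u=v=\mathrm{id}\in S_n$ with $n\ge 2$, which lies in both $\Av_n(132)$ and $\Av_n(231)$. Then $\FS(u,v)=\max(Y_1,Y_2)=1$, whereas $Y_{n+1}(u,v)=n$. More generally, as the paper notes in the uniform case, on the event $\max(\FS(u),\FS(v))<n$ the extended maximum equals $\max\{n,\FS(u,v)\}$, and this can strictly exceed $\FS(u,v)$. The independence of $\FS(u,v)$ from the embedding $S_n\hookrightarrow S_{n+1}$ concerns the intrinsic cutoff; it does not license enlarging that cutoff.

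Once the identity for $\FS(w)$ is in place, your Step 2 is correct and is precisely the paper's argument: push the equality of the independent product laws of $(\chi(u),\chi(v))$ through the deterministic map to $\FS(u,v)$.
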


\begin{proof}
For $w\in S_{\mathbb Z_+}$, set
\[
m_\chi(w):=
\begin{cases}
1, & \text{if }\chi_j(w)=0\text{ for all }j,\\
\max\{j\ge 1:\chi_j(w)=1\}, & \text{otherwise}.
\end{cases}
\]
We claim that $m_\chi(w)=\FS(w)$.  Indeed, if $\chi_j(w)=0$ for all $j$, then every position is a record, hence $w$ is increasing,
so $w=id$ and $\FS(w)=1$.  Otherwise let $m=\FS(w)\ge 2$.  Since $w\in S_m$, we have $w(j)=j$ for all $j>m$, and because
$w(1),\dots,w(m)$ is a permutation of $[m]$, it follows that $w(j)=j>\max\{w(1),\dots,w(j-1)\}$ for every $j>m$; hence
$\chi_j(w)=0$ for all $j>m$.  On the other hand, $w(m)\neq m$ forces $w(m)<m$, so there exists $i<m$ with $w(i)>w(m)$,
and therefore $\chi_m(w)=1$.  Thus $\max\{j:\chi_j(w)=1\}=m$, proving the claim.

Now for $u,v\in S_{\mathbb Z_+}$, the Hardt--Wallach max formula reads
\[
\FS(u,v)=\max_{1\le i\le 1+\max(\FS(u),\FS(v))}\left(\Lambda_i(u)+\Lambda_i(v)+i-1\right),
\]
and each $\Lambda_i(w)$ is determined by $\chi(w)$ via $\Lambda_i(w)=\sum_{j\ge i}\chi_j(w)$.
By the claim above, the cutoff $1+\max(\FS(u),\FS(v))$ is also determined by the pair $(\chi(u),\chi(v))$.
Consequently, $\FS(u,v)$ is a deterministic function of the pair of record-complement vectors $(\chi(u),\chi(v))$.

By Corollary~\ref{cor:132-231-record-eqdist-prob}, we have $\chi(u_{132})\stackrel{d}{=}\chi(u_{231})$ and
$\chi(v_{132})\stackrel{d}{=}\chi(v_{231})$, and independence implies
\[
(\chi(u_{132}),\chi(v_{132}))\ \stackrel{d}{=}\ (\chi(u_{231}),\chi(v_{231})).
\]
Applying the map $(\chi(u),\chi(v))\mapsto \FS(u,v)$ gives
$\FS(u_{132},v_{132})\stackrel{d}{=}\FS(u_{231},v_{231})$, and the expectation identity follows immediately.
\end{proof}

\subsection{Toward a classification of record-equivalent patterns}

The exact coincidence between $\Av_n(132)$ and $\Av_n(231)$ proved in Section~\ref{sec:132-vs-231} suggests a broader classification problem: when do two avoidance classes have the same record-set distribution?

\begin{definition}
Let $\pi,\sigma\in S_k$. We say that $\pi$ and $\sigma$ are \emph{record-equivalent} if for every $n\ge k$ and every subset $R\subseteq[n]$,
\[
\left|\{\,w\in \Av_n(\pi): \Rec(w)=R\,\}\right|
=
\left|\{\,w\in \Av_n(\sigma): \Rec(w)=R\,\}\right|.
\]
\end{definition}

\begin{definition}
Let $\pi\in S_k$ and let $i\in[k]$. Let $\del_i(\pi)$ be the length-$(k-1)$ word obtained by deleting the entry of $\pi$ in position $i$,
and define
\[
\pi^{(i)}:=\std(\del_i(\pi))\in S_{k-1}.
\]
Also write $m_\pi:=\pi^{-1}(k)$ for the position of the maximum entry of $\pi$.
\end{definition}

\begin{conjecture}[Recursive criterion for record-equivalence]\label{conj:record-equivalence-recursive}
Fix $k\ge 2$ and let $\pi,\sigma\in S_k$. Then $\pi$ and $\sigma$ are record-equivalent if and only if all of the following hold:
\begin{enumerate}[label=\textnormal{(\arabic*)}]
\item \textbf{Record-set agreement at rank $k$:}
\[
\Rec(\pi)=\Rec(\sigma).
\]

\item \textbf{Wilf-equivalence at rank $k$:}
\[
|\Av_n(\pi)|=|\Av_n(\sigma)|\qquad\text{for all }n\ge k.
\]

\item \textbf{Non-max deletion minors are record-equivalent:} if $m_\pi=m_\sigma$, then for every $i\in[k]$ with $i\neq m_\pi$,
\[
\pi^{(i)}\ \text{and}\ \sigma^{(i)}\ \text{are record-equivalent in }S_{k-1}.
\]

\item \textbf{Terminal step when the maximum sits at $k-1$:} if $k\ge 4$ and
\[
\Rec(\pi)=\Rec(\sigma)=[k-1]
\qquad\text{and}\qquad
m_\pi=m_\sigma=k-1,
\]
then
\[
\pi(k)=\sigma(k).
\]
\end{enumerate}
\end{conjecture}

This conjecture is supported by exhaustive computations for all pairs of patterns of size $k\le 8$, with the record-set distributions compared through avoidance classes of lengths $n\le 15$. It suggests that record-equivalence is controlled recursively by the position of the maximum together with the record-equivalence classes of the deletion minors away from that position.

\bibliographystyle{alpha}
\bibliography{reference}

@book{DubhashiPanconesi2009,
  author    = {Dubhashi, Devdatt P. and Panconesi, Alessandro},
  title     = {Concentration of Measure for the Analysis of Randomized Algorithms},
  publisher = {Cambridge University Press},
  year      = {2009},
  address   = {Cambridge},
  doi       = {10.1017/CBO9780511581274},
}

@book{BoucheronLugosiMassart2013,
  author    = {Boucheron, St{\'e}phane and Lugosi, G{\'a}bor and Massart, Pascal},
  title     = {Concentration Inequalities: A Nonasymptotic Theory of Independence},
  publisher = {Oxford University Press},
  year      = {2013},
  address   = {Oxford},
  isbn      = {978-0-19-876765-7},
  doi       = {10.1093/acprof:oso/9780199535255.001.0001},
}

@book {V89,
    AUTHOR = {Vajda, S.},
     TITLE = {Fibonacci \& {L}ucas numbers, and the golden section},
    SERIES = {Ellis Horwood Series: Mathematics and its Applications},
      NOTE = {Theory and applications,
              With chapter XII by B. W. Conolly},
 PUBLISHER = {Ellis Horwood Ltd., Chichester; Halsted Press [John Wiley \&
              Sons, Inc.], New York},
      YEAR = {1989},
     PAGES = {190},
      ISBN = {0-470-21508-9},
   MRCLASS = {11B39 (05A15)},
  MRNUMBER = {1015938},
MRREVIEWER = {Neville\ Robbins},
}

@article{HardtWallach2024Schubert,
  author    = {Hardt, Andrew and Wallach, David},
  title     = {When do {S}chubert polynomial products stabilize?},
  journal   = {arXiv preprint arXiv:2412.06976},
  year      = {2024},
  month     = dec,
  url       = {https://arxiv.org/abs/2412.06976},
  eprint    = {2412.06976},
  archivePrefix = {arXiv},
  primaryClass = {math.CO},
  note      = {Preprint}
}

@article {HardtWallach-Grothendieck,
    AUTHOR = {Hardt, Andrew and Wallach, David},
     TITLE = {Stability of products of double {G}rothendieck polynomials},
   JOURNAL = {Int. Math. Res. Not. IMRN},
  FJOURNAL = {International Mathematics Research Notices. IMRN},
      YEAR = {2025},
    VOLUME = {2025},
    NUMBER = {22},
     PAGES = {Paper No. rnaf338, 15},
      ISSN = {1073-7928,1687-0247},
   MRCLASS = {05E05 (05E14 14N15)},
  MRNUMBER = {4987463},
       DOI = {10.1093/imrn/rnaf338},
       URL = {https://doi.org/10.1093/imrn/rnaf338},
}

@article{Renyi1962,
  author    = {R{\'e}nyi, Alfred},
  title     = {Th{\'e}orie des {\'e}l{\'e}ments saillants d'une suite d'observations},
  journal   = {Annales de la facult{\'e} des sciences de l'universit{\'e} de Clermont. Math{\'e}matiques},
  volume    = {8},
  number    = {2},
  pages     = {7--13},
  year      = {1962},
  publisher = {UER de Sciences exactes et naturelles de l'Universit{\'e} de Clermont},
  url       = {https://www.numdam.org/item/ASCFM_1962__8_2_7_0/},
  note      = {Actes du colloque de math{\'e}matiques r{\'e}uni {\`a} Clermont {\`a} l'occasion du tricentenaire de la mort de Blaise Pascal. Tome 2}
}

@book{BjornerBrenti2005,
  author    = {Bj{\"o}rner, Anders and Brenti, Francesco},
  title     = {Combinatorics of Coxeter Groups},
  series    = {Graduate Texts in Mathematics},
  volume    = {231},
  publisher = {Springer},
  address   = {New York},
  year      = {2005},
  isbn      = {978-3540442387}
}

@article{Tenner2007,
  author  = {Tenner, Bridget Eileen},
  title   = {Pattern avoidance and the {Bruhat} order},
  journal = {Journal of Combinatorial Theory, Series A},
  volume  = {114},
  number  = {5},
  pages   = {888--905},
  year    = {2007},
  doi     = {10.1016/j.jcta.2006.10.003},
  eprint  = {math/0604322},
  archivePrefix = {arXiv},
  primaryClass  = {math.CO}
}

@misc{Macdonald1991Schubert,
  author       = {Macdonald, I. G.},
  title        = {Notes on {Schubert} Polynomials},
  year         = {1991},
  howpublished = {LACIM Publications, Universit{\'e} du Qu{\'e}bec {\`a} Montr{\'e}al},
  note         = {LACIM, Montr{\'e}al, Avril 1991},
  url          = {https://www.math.uwaterloo.ca/~opecheni/macdonaldschubert.pdf}
}

@book{Billingsley1999Convergence,
  author    = {Patrick Billingsley},
  title     = {Convergence of Probability Measures},
  edition   = {2},
  publisher = {John Wiley \& Sons},
  year      = {1999},
  doi       = {10.1002/9780470316962}
}

@book{vanDerVaart1998AsymptoticStatistics,
  author    = {A. W. van der Vaart},
  title     = {Asymptotic Statistics},
  publisher = {Cambridge University Press},
  year      = {1998},
  series    = {Cambridge Series in Statistical and Probabilistic Mathematics}
}

@article {PechenikSpeyerWeigandt2022,
    AUTHOR = {Pechenik, Oliver and Speyer, David E. and Weigandt, Anna},
     TITLE = {Castelnuovo-{M}umford regularity of matrix {S}chubert
              varieties},
   JOURNAL = {Selecta Math. (N.S.)},
  FJOURNAL = {Selecta Mathematica. New Series},
    VOLUME = {30},
      YEAR = {2024},
    NUMBER = {4},
     PAGES = {Paper No. 66, 44},
      ISSN = {1022-1824,1420-9020},
   MRCLASS = {05E05 (05E40 13C40 14M15)},
  MRNUMBER = {4768772},
MRREVIEWER = {Giuseppe\ Favacchio},
       DOI = {10.1007/s00029-024-00959-x},
       URL = {https://doi.org/10.1007/s00029-024-00959-x},
}

@article {LakshmibaiSandhya1990,
    AUTHOR = {Lakshmibai, V. and Sandhya, B.},
     TITLE = {Criterion for smoothness of {S}chubert varieties in {${\rm
              Sl}(n)/B$}},
   JOURNAL = {Proc. Indian Acad. Sci. Math. Sci.},
  FJOURNAL = {Indian Academy of Sciences. Proceedings. Mathematical
              Sciences},
    VOLUME = {100},
      YEAR = {1990},
    NUMBER = {1},
     PAGES = {45--52},
      ISSN = {0253-4142,0973-7685},
   MRCLASS = {14M15 (14L35)},
  MRNUMBER = {1051089},
MRREVIEWER = {H.\ H.\ Andersen},
       DOI = {10.1007/BF02881113},
       URL = {https://doi.org/10.1007/BF02881113},
}

@article {BilleyJockuschStanley1993,
    AUTHOR = {Billey, Sara C. and Jockusch, William and Stanley, Richard P.},
     TITLE = {Some combinatorial properties of {S}chubert polynomials},
   JOURNAL = {J. Algebraic Combin.},
  FJOURNAL = {Journal of Algebraic Combinatorics. An International Journal},
    VOLUME = {2},
      YEAR = {1993},
    NUMBER = {4},
     PAGES = {345--374},
      ISSN = {0925-9899,1572-9192},
   MRCLASS = {05E05 (05E10 14M15 20F55)},
  MRNUMBER = {1241505},
MRREVIEWER = {Axel\ Kohnert},
       DOI = {10.1023/A:1022419800503},
       URL = {https://doi.org/10.1023/A:1022419800503},
}

@article {LiLiYong2011Drift,
    AUTHOR = {Li, Li and Yong, Alexander},
     TITLE = {Kazhdan-{L}usztig polynomials and drift configurations},
   JOURNAL = {Algebra Number Theory},
  FJOURNAL = {Algebra \& Number Theory},
    VOLUME = {5},
      YEAR = {2011},
    NUMBER = {5},
     PAGES = {595--626},
      ISSN = {1937-0652,1944-7833},
   MRCLASS = {14M15 (05E15 13D40 20C08)},
  MRNUMBER = {2889748},
MRREVIEWER = {Anthony\ Henderson},
       DOI = {10.2140/ant.2011.5.595},
       URL = {https://doi.org/10.2140/ant.2011.5.595},
}

@article {LiYong2012Degenerations,
    AUTHOR = {Li, Li and Yong, Alexander},
     TITLE = {Some degenerations of {K}azhdan-{L}usztig ideals and
              multiplicities of {S}chubert varieties},
   JOURNAL = {Adv. Math.},
  FJOURNAL = {Advances in Mathematics},
    VOLUME = {229},
      YEAR = {2012},
    NUMBER = {1},
     PAGES = {633--667},
      ISSN = {0001-8708,1090-2082},
   MRCLASS = {14M15 (05E15 13P10 14N15)},
  MRNUMBER = {2854186},
MRREVIEWER = {Changzheng\ Li},
       DOI = {10.1016/j.aim.2011.09.010},
       URL = {https://doi.org/10.1016/j.aim.2011.09.010},
}

@article {GaoHodgesYong2024LeviSpherical,
    AUTHOR = {Gao, Yibo and Hodges, Reuven and Yong, Alexander},
     TITLE = {Levi-spherical {S}chubert varieties},
   JOURNAL = {Adv. Math.},
  FJOURNAL = {Advances in Mathematics},
    VOLUME = {439},
      YEAR = {2024},
     PAGES = {Paper No. 109486, 14},
      ISSN = {0001-8708,1090-2082},
   MRCLASS = {14M15 (20G05 20G07)},
  MRNUMBER = {4690502},
MRREVIEWER = {J.\ Matthew\ Douglass},
       DOI = {10.1016/j.aim.2024.109486},
       URL = {https://doi.org/10.1016/j.aim.2024.109486},
}

@article {DiaconisZabell1991,
    AUTHOR = {Diaconis, Persi and Zabell, Sandy},
     TITLE = {Closed form summation for classical distributions: variations
              on a theme of de {M}oivre},
   JOURNAL = {Statist. Sci.},
  FJOURNAL = {Statistical Science. A Review Journal of the Institute of
              Mathematical Statistics},
    VOLUME = {6},
      YEAR = {1991},
    NUMBER = {3},
     PAGES = {284--302},
      ISSN = {0883-4237,2168-8745},
   MRCLASS = {60-03 (01A50 60E05 62E15)},
  MRNUMBER = {1144242},
MRREVIEWER = {C.\ C.\ Heyde},
       URL =
              {http://links.jstor.org/sici?sici=0883-4237(199108)6:3<284:CFSFCD>2.0.CO;2-R&origin=MSN},
}

@book {KnuthTAOCP1,
    AUTHOR = {Knuth, Donald E.},
     TITLE = {The art of computer programming. {V}ol. 1},
      NOTE = {Fundamental algorithms,
              Third edition [of MR0286317]},
 PUBLISHER = {Addison-Wesley, Reading, MA},
      YEAR = {1997},
     PAGES = {xx+650},
      ISBN = {0-201-89683-4},
   MRCLASS = {68-02},
  MRNUMBER = {3077152},
}

@article {LascouxSchutzenberger,
    AUTHOR = {Lascoux, Alain and Sch\"utzenberger, Marcel-Paul},
     TITLE = {Polyn\^omes de {S}chubert},
   JOURNAL = {C. R. Acad. Sci. Paris S\'er. I Math.},
  FJOURNAL = {Comptes Rendus des S\'eances de l'Acad\'emie des Sciences.
              S\'erie I. Math\'ematique},
    VOLUME = {294},
      YEAR = {1982},
    NUMBER = {13},
     PAGES = {447--450},
      ISSN = {0249-6291},
   MRCLASS = {14M17 (05A10 14N10)},
  MRNUMBER = {660739},
}

@article {Li-back-stable-conjecture,
    AUTHOR = {Li, Nan},
     TITLE = {A canonical expansion of the product of two {S}tanley
              symmetric functions},
   JOURNAL = {J. Algebraic Combin.},
  FJOURNAL = {Journal of Algebraic Combinatorics. An International Journal},
    VOLUME = {39},
      YEAR = {2014},
    NUMBER = {4},
     PAGES = {833--851},
      ISSN = {0925-9899,1572-9192},
   MRCLASS = {05E05},
  MRNUMBER = {3199028},
MRREVIEWER = {Domenico\ Senato},
       DOI = {10.1007/s10801-013-0469-2},
       URL = {https://doi.org/10.1007/s10801-013-0469-2},
}

@book{LevinPeres2017Markov,
  author    = {Levin, David A. and Peres, Yuval},
  title     = {Markov Chains and Mixing Times},
  edition   = {2},
  publisher = {American Mathematical Society},
  year      = {2017},
  address   = {Providence, RI},
  isbn      = {978-1-4704-2962-1},
  url       = {https://pages.uoregon.edu/dlevin/MARKOV/markovmixing.pdf}
}

@article {Hersh05,
    AUTHOR = {Hersh, Patricia},
     TITLE = {On optimizing discrete {M}orse functions},
   JOURNAL = {Adv. in Appl. Math.},
  FJOURNAL = {Advances in Applied Mathematics},
    VOLUME = {35},
      YEAR = {2005},
    NUMBER = {3},
     PAGES = {294--322},
      ISSN = {0196-8858,1090-2074},
   MRCLASS = {55P15 (05E25 06A07)},
  MRNUMBER = {2164921},
MRREVIEWER = {Axel\ Hultman},
       DOI = {10.1016/j.aam.2005.04.001},
       URL = {https://doi.org/10.1016/j.aam.2005.04.001},
}

@incollection {MPR09,
    AUTHOR = {Merlev\`ede, Florence and Peligrad, Magda and Rio, Emmanuel},
     TITLE = {Bernstein inequality and moderate deviations under strong
              mixing conditions},
 BOOKTITLE = {High dimensional probability {V}: the {L}uminy volume},
    SERIES = {Inst. Math. Stat. (IMS) Collect.},
    VOLUME = {5},
     PAGES = {273--292},
 PUBLISHER = {Inst. Math. Statist., Beachwood, OH},
      YEAR = {2009},
      ISBN = {978-0-940600-78-2},
   MRCLASS = {60E15 (60F10)},
  MRNUMBER = {2797953},
MRREVIEWER = {M.\ Cs\"{o}rg\H{o}},
       DOI = {10.1214/09-IMSCOLL518},
       URL = {https://doi.org/10.1214/09-IMSCOLL518},
}

@article {Bradley05,
    AUTHOR = {Bradley, Richard C.},
     TITLE = {Basic properties of strong mixing conditions. {A} survey and
              some open questions},
      NOTE = {Update of, and a supplement to, the 1986 original},
   JOURNAL = {Probab. Surv.},
  FJOURNAL = {Probability Surveys},
    VOLUME = {2},
      YEAR = {2005},
     PAGES = {107--144},
      ISSN = {1549-5787},
   MRCLASS = {60G10},
  MRNUMBER = {2178042},
MRREVIEWER = {Lajos\ Horv\'{a}th},
       DOI = {10.1214/154957805100000104},
       URL = {https://doi.org/10.1214/154957805100000104},
}

\end{document}